\theoremstyle{plain}
\newtheorem{thm}{Theorem}
\newtheorem*{thm*}{Theorem}
\newtheorem*{Theorem A}{Theorem A}
\newtheorem*{Theorem B}{Theorem B}
\newtheorem*{Theorem C}{Theorem C}
\newtheorem{lem}[thm]{Lemma}
\newtheorem{prop}[thm]{Proposition}
\theoremstyle{definition}
\newtheorem{defn}[thm]{Definition}
\theoremstyle{remark}
\newcommand{\pl}{\partial}
\newcommand{\na}{\nabla}
\newcommand{\lt}{\left}
\newcommand{\rt}{\right}
\newcommand{\rw}{\rightarrow}
\newcommand{\R}{\mathbb{R}}
\newcommand{\mf}{\mathbf}
\newcommand{\mr}{\mathbf{r}}
\newcommand{\my}{\mathbf{y}}
\newcommand{\mz}{\mathbf{z}}
\newcommand{\mw}{\mathbf{w}}
\newcommand{\bY}{\bar{Y}}
\begin{document}
\fontsize{12}{16pt plus.4pt minus.3pt}\selectfont

\title{On Isometric Embeddings into Anti-de Sitter Space-times}

\author{Ye-Kai Wang \\
Department of Mathematics\\
Columbia University\\
yw2293@math.columbia.edu\\
\and
Chen-Yun Lin\\
Department of Mathematics\\
National Taiwan University\\
chenyunlin@ntu.edu.tw
}
\maketitle

\begin{abstract}
We show that any metric on $S^2$ with Gauss curvature $K \geq -\kappa$ admits a $C^{1,1}$-isometric embedding into the hyperbolic space with sectional curvature $-\kappa$. We also give a sufficient condition for a metric on $S^2$ to be isometrically embedded into anti-de Sitter spacetime with the prescribed cosmological time function.
 
\end{abstract}

\section{Introduction}
Weyl's isometric embedding theorem states that
\begin{thm*}
Let $\bar{\sigma}$ be a smooth metric on $S^2$ with positive Gauss curvature. Then there exists a smooth isometric embedding $i:(S^2,\bar{\sigma}) \rw \mathbb{R}^3$ which is unique up to congruence. 
\end{thm*}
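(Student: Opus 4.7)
The plan is to prove existence by the method of continuity and uniqueness by a rigidity argument. For existence, I would connect the given metric $\bar{\sigma}$ to the standard round metric $\sigma_0$ on $S^2$ through a smooth one-parameter family $\bar{\sigma}_t$, $t\in[0,1]$, of smooth metrics with positive Gauss curvature (for instance $\bar{\sigma}_t=(1-t)\sigma_0+t\bar{\sigma}$ after rescaling, together with a check that positivity of $K$ is preserved along the path; if not, one reparametrises). Let
\[
T=\{t\in[0,1]:(S^2,\bar{\sigma}_t)\ \text{admits a smooth isometric embedding into }\R^3\}.
\]
Since $0\in T$ via the round embedding, it suffices to prove that $T$ is both open and closed.

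For openness, I would linearise the isometric embedding system at an existing embedding $i_{t_0}$. Because $K>0$, the image is a strictly convex surface and the second fundamental form $h_{t_0}$ is positive definite. Using the Codazzi-Mainardi identities to eliminate the tangential components of the variation, the linearised equation reduces to a scalar second-order equation for the normal variation whose leading symbol is the cofactor of $h_{t_0}$; this symbol is positive definite, so the linearisation is elliptic on $S^2$. A standard implicit function theorem argument in Hölder spaces $C^{k,\alpha}(S^2)$ then gives openness, provided the kernel is precisely the six-dimensional space generated by rigid motions of $\R^3$, which is the content of infinitesimal rigidity of closed convex surfaces.

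For closedness, the task is to obtain a priori $C^{2,\alpha}$ estimates on $i_t$ uniform in $t$. The diameter and the $C^0,C^1$ norms are controlled by convexity together with the intrinsic diameter coming from the Bonnet-Myers-type bound in positive curvature. The crucial and classical step is the $C^2$ estimate, i.e.\ a bound on the second fundamental form. In a local graph representation $z=f(x,y)$ the Gauss equation becomes the Monge-Amp\`ere equation
\[
\det(f_{ij})=K\,(1+|\na f|^2)^2,
\]
and one may invoke Pogorelov/Nirenberg-type interior estimates, using the convexity and the strict positivity of $K$, to bound $|f|_{C^2}$. Once the $C^2$ estimate is secured, Evans-Krylov and Schauder theory promote it to $C^{k,\alpha}$ for all $k$, and Arzel\`a-Ascoli yields a smooth limiting embedding, giving closedness. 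I expect this $C^2$ estimate to be the main obstacle, since everything else is either soft (openness, compactness) or standard elliptic regularity.

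Uniqueness up to congruence is Cohn-Vossen's rigidity theorem: two isometric smooth closed convex surfaces in $\R^3$ differ by a rigid motion. One proof integrates the pointwise identity arising from the difference of the two shape operators against a cleverly chosen vector field and applies Minkowski-type integral formulas; the positivity of $K$ together with the closedness of $S^2$ forces the integrand to vanish identically, so the two second fundamental forms agree and the fundamental theorem of surface theory completes the proof.
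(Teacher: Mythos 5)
The paper does not prove this statement itself: it is quoted as Weyl's theorem, with the proof attributed to Nirenberg and to Alexandrov--Pogorelov, and the continuity scheme you describe is precisely the framework the paper adapts to $\mathbb{H}^3_{-\kappa}$ in Section 4. Measured against that framework your outline is the right one, but two of your steps are genuinely different choices and one is a real gap. The different choices: for the path of metrics the paper runs the normalized Ricci flow, which connects $\bar{\sigma}$ to a constant-curvature metric and preserves (indeed improves) the curvature lower bound, whereas your linear interpolation stays inside positively curved metrics only by virtue of a nontrivial classical lemma (a convex combination of two positively curved metrics on a surface is positively curved); ``if not, one reparametrises'' is not a substitute for that lemma, since connectedness through admissible metrics is exactly what the method rests on. For closedness, Weyl--Nirenberg and the paper bound the second fundamental form globally, by applying the maximum principle on the closed surface to a test function built from the largest principal curvature (the paper's Theorem B uses $\log\lambda+\alpha|X|^2/2$, designed to survive even $K\geq-\kappa$); your alternative --- local graphs and Pogorelov/Nirenberg interior Monge--Amp\`ere estimates --- can be made to work when $K$ has a positive lower bound, but it is a different route and, as you admit, the least developed part of your sketch.

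The genuine gap is in openness. Knowing that the kernel of the linearization is exactly the six-dimensional space of infinitesimal rigid motions gives injectivity modulo congruences, not surjectivity, so a ``standard implicit function theorem argument'' cannot be invoked as stated: the linearized operator has nontrivial kernel, and after Nirenberg's reduction the scalar second-order equation is self-adjoint in divergence form with that same kernel, so the inhomogeneous linearized problem is solvable only if the right-hand side produced by an arbitrary metric perturbation $q_{ab}$ is $L^2$-orthogonal to the kernel. That orthogonality does hold for every $q_{ab}$, but it is an honest computation rather than a formal consequence of infinitesimal rigidity; the paper carries it out explicitly (the integral identity involving $\bar{w}$ and $\bar{w}_a$ in the proof of its solvability proposition for the linearized equation), and only then gets a bounded right inverse and closes the nonlinear step by a contraction mapping. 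A smaller inaccuracy: the scalar unknown in this reduction is a rotation-type quantity (the antisymmetric part of the tangential derivative of the deformation), not the normal component of the variation, though your identification of the principal symbol with the cofactor matrix of $h$ is correct. Your uniqueness step is Cohn--Vossen's rigidity, which is the classical argument and is consistent with the paper, which also treats uniqueness as known.
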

Weyl's theorem was proved independently by Nirenberg \cite{Nirenberg53: Weyl problem} and Alexandrov-Pogorelov \cite{Pogorelov52} using different approaches. Nirenberg used the continuity method that is more familiar to geometric analysts. The Alexandrov-Pogorelov approach consists of two steps. First Alexandrov exhibited a generalized solution as the limit of polyhedra and then Pogorelov proved the regularity of this generalized solution. 

Later, Pogorelov generalized the Weyl theorem to hyperbolic space $\mathbb{H}^3_{-\kappa}$ with sectional curvature $-\kappa$:  
\begin{thm*}\cite[Theorem 2]{Pogorelov64}
Let $\bar{\sigma}$ be a smooth metric on $S^2$ with Gauss curvature $K > -\kappa.$ Then there exists a smooth isometric embedding $i:(S^2, \bar{\sigma}) \rw \mathbb{H}^3_{-\kappa}$.
\end{thm*}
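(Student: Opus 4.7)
The plan is to prove Pogorelov's theorem by the continuity method, adapting Nirenberg's scheme for the Euclidean Weyl problem to the ambient space $\mathbb{H}^3_{-\kappa}$. The geometric starting point is the Gauss equation for a surface $\Sigma \hookrightarrow \mathbb{H}^3_{-\kappa}$,
\[
K = -\kappa + k_1 k_2,
\]
so that the hypothesis $K > -\kappa$ is equivalent to strict convexity of the image. Thus, exactly as in Weyl's theorem, the task reduces to producing a strictly convex isometric embedding, and one expects the analytical framework used in $\mathbb{R}^3$ to transplant with only minor technical modifications.

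First I would choose a smooth one-parameter family of metrics $\{\bar{\sigma}_t\}_{t \in [0,1]}$ on $S^2$ joining the given $\bar{\sigma}_1 = \bar{\sigma}$ to a reference metric $\bar{\sigma}_0$ realized by a small geodesic sphere in $\mathbb{H}^3_{-\kappa}$, arranged so that the Gauss curvatures satisfy $K_t \geq -\kappa + \varepsilon_0$ uniformly in $t$ for some $\varepsilon_0 > 0$. Let $S \subset [0,1]$ be the set of parameters for which $\bar{\sigma}_t$ admits a $C^{2,\alpha}$ isometric embedding into $\mathbb{H}^3_{-\kappa}$. Since $0 \in S$ by construction, the goal is to show $S$ is both open and closed, whence $1 \in S$.

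Openness follows by linearizing the isometric embedding system at an existing solution $X_{t_0}: (S^2, \bar{\sigma}_{t_0}) \rightarrow \mathbb{H}^3_{-\kappa}$. Strict convexity of $X_{t_0}(S^2)$ makes the linearized operator elliptic, and after quotienting by the six-dimensional isometry group of $\mathbb{H}^3_{-\kappa}$ to eliminate the obvious kernel, the implicit function theorem in Hölder spaces produces nearby embeddings for all $t$ close to $t_0$.

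The main obstacle is closedness, which reduces to proving $C^{2,\alpha}$ a priori estimates on $X_t$ uniform in $t$. I would follow the classical Weyl--Nirenberg--Pogorelov three-stage scheme: (i) a $C^0$ estimate confining the image to a fixed compact region of $\mathbb{H}^3_{-\kappa}$ via a diameter bound; (ii) a $C^1$ estimate, essentially automatic from the prescribed induced metric; and (iii) the delicate $C^2$ bound on the principal curvatures, obtained by applying the maximum principle to a suitable curvature quantity (e.g., the largest principal curvature or an auxiliary function built from mean curvature and a support-type barrier), using the Codazzi and Simons identities in $\mathbb{H}^3_{-\kappa}$. The only novelty relative to the Euclidean setting is the appearance of ambient-curvature terms involving $-\kappa$ in the Gauss, Codazzi, and Simons identities; because these terms have a definite sign, they can be absorbed rather than destroying the maximum principle. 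Once a uniform $C^{2,\alpha}$ bound is in place, Schauder theory and iterative bootstrapping upgrade the regularity to $C^\infty$, completing the argument.
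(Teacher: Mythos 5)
Your overall skeleton (continuity method: path of metrics, openness, closedness, bootstrap) is the same as the paper's, but the proposal glosses over precisely the points where the argument is hard, and two of your claims are not correct as stated. First, openness: the linearized isometric embedding system $g_{ij}(\mathbf{r}^i_a D_b \mathbf{y}^j + D_a\mathbf{y}^i\,\mathbf{r}^j_b)=\bar q_{ab}$ is a first-order determined system whose principal symbol annihilates the normal direction for every covector, so it is \emph{never} elliptic, convex image or not; "strict convexity makes the linearized operator elliptic, quotient by isometries, apply the implicit function theorem" does not go through as written. What is actually needed (and what the paper does, following Nirenberg) is a reduction of the system to a single second-order scalar equation for the rotational component $w$, which is elliptic exactly because $\det(h_{ab})>0$, together with the infinitesimal rigidity theorem identifying the kernel with the restrictions of Killing fields $Y_0\times X+fZ_0$, and a verification that the inhomogeneous term is $L^2$-orthogonal to that kernel so Fredholm/Hilbert theory applies. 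None of these ingredients is mentioned, and the rigidity statement in hyperbolic space is itself a nontrivial theorem (Section 3 of the paper).

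Second, closedness: your diagnosis of the difficulty is wrong. The ambient $-\kappa$ terms are harmless; the problem is the \emph{intrinsic} Gauss curvature term $K\lambda(\lambda-\mu)\sim K\lambda^2$ appearing when one differentiates the Gauss equation, and under the hypothesis $K>-\kappa$ the intrinsic curvature $K$ may well be negative, so this leading-order term has the wrong sign and cannot simply be "absorbed". The classical Weyl--Nirenberg maximum-principle argument on the mean curvature therefore fails; one must use a test function such as Pogorelov's $\lambda/\langle X,\nu\rangle^{\alpha}$ (which additionally requires a positive lower bound on the support function, obtained from convexity plus an inradius estimate) or the paper's $\log\lambda+\alpha|X|^2/2$, with $\alpha$ chosen in terms of $\min_\Sigma(K+\kappa)>0$ and $\min_\Sigma K$. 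Two further gaps: the maximum principle yields only a bound on the principal curvatures, i.e.\ a $C^{1,1}$ estimate, and passing from there to $C^{2,\alpha}$ is not "Schauder theory" — it requires the fully nonlinear elliptic estimates in two dimensions (Nirenberg 1953, or Evans--Krylov-type results) applied to the uniformly elliptic Monge--Amp\`ere-type equation satisfied by $\rho=\tfrac12|X|^2$, which is exactly how the paper upgrades regularity; and your path of metrics with $K_t\geq-\kappa+\varepsilon_0$ is asserted rather than constructed, whereas the paper obtains it from the normalized Ricci flow together with the maximum principle for the scalar curvature evolution. Each of these gaps is fillable by known arguments, but as written the proposal omits the actual content of the proof.
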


Another question concerning Weyl's theorem is what happens if we assume the Gauss curvature is merely nonnegative instead of positive? Guan-Li \cite{Guan-Li94} and Hong-Zuily \cite{Hong-Zuily95} independently proved the following (see also \cite{Iaia92} for prior results)
\begin{thm*}
Let $\bar{\sigma}$ be a $C^4$ Riemannian metric on $S^2$ with Gauss curvature $K \geq 0.$ Then there exists a $C^{1,1}$ isometric embedding $i: (S^2,\bar{\sigma}) \rw \mathbb{R}^3.$
\end{thm*}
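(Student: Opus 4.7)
The plan is to follow the classical approximation strategy: perturb to the strictly positive-curvature regime, apply Weyl's theorem, and pass to the limit under uniform second-order estimates. First, I would smooth $\bar{\sigma}$ to a family of $C^\infty$ metrics $\bar{\sigma}_\epsilon$ on $S^2$ with strictly positive Gauss curvature $K_\epsilon\geq\epsilon>0$, converging to $\bar{\sigma}$ in $C^4$ as $\epsilon\rw 0$. Such a family may be produced, for instance, by a small conformal deformation $\bar{\sigma}_\epsilon=e^{2u_\epsilon}\bar{\sigma}$ where $u_\epsilon$ solves an elliptic equation chosen so that the Gauss curvature picks up an additive $\epsilon$, or by the simpler device $\bar{\sigma}+\epsilon g_0$ with $g_0$ a reference metric of positive curvature. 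Weyl's theorem (the first statement in the excerpt) then supplies smooth isometric embeddings $X_\epsilon\colon(S^2,\bar{\sigma}_\epsilon)\rw\R^3$, unique up to rigid motion.

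Next, I would establish a uniform $C^{1,1}$ estimate for $X_\epsilon$, that is, a bound on the second fundamental form $II_\epsilon$ of $X_\epsilon(S^2)$ independent of $\epsilon$. The $C^0$ bound follows after normalizing by rigid motion, since the intrinsic diameter of $\bar{\sigma}_\epsilon$ is uniformly controlled; $|\nabla X_\epsilon|$ is automatic from the isometric condition. By the Gauss equation, $\det II_\epsilon=K_\epsilon\det\bar{\sigma}_\epsilon$, so the real task is controlling the largest principal curvature $\kappa_{\max}$. Following Pogorelov, refined by Guan--Li and Hong--Zuily, I would apply the maximum principle to an auxiliary function of the form $\log\kappa_{\max}+\vphi(X_\epsilon)$, with $\vphi$ a carefully chosen quadratic in the position vector. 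Differentiating twice, using the Codazzi equations in a principal-direction frame, should yield a differential inequality whose analysis at an interior maximum delivers $\kappa_{\max}\leq C$ with $C$ depending only on $\bar{\sigma}$.

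The main obstacle is precisely this uniform $C^{1,1}$ estimate in the degenerate regime $K\geq 0$. When $K_\epsilon\geq\epsilon>0$ strictly, the product $\kappa_1\kappa_2$ is bounded below and the Pogorelov maximum principle runs smoothly; in the limit $\epsilon\rw 0$ that cushion disappears, and one principal curvature can tend to zero while we must nonetheless prevent blow-up of the other. Handling this requires either a careful design of the test function so that the degeneration of $K$ appears with favorable sign in the maximum-principle computation (the Guan--Li barrier argument, which exploits a Darboux-type frame near points where $K$ vanishes), or a direct attack on the degenerate Monge--Amp\`ere equation via Alexandrov's generalized solutions combined with interior regularity (the Hong--Zuily route). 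Either way, the work lies in showing that the degenerate terms do not destabilize the second-order estimate.

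Finally, with the uniform $C^{1,1}$ bound in place, Arzel\`a--Ascoli extracts a subsequence $X_\epsilon\to X$ converging in $C^{1,\alpha}$ for every $\alpha<1$, and the uniform second-derivative bound passes to the weak-$*$ limit, giving $X\in C^{1,1}(S^2,\R^3)$. The isometric identity $X_\epsilon^*\langle\cdot,\cdot\rangle_{\R^3}=\bar{\sigma}_\epsilon$ is stable under this convergence since $\bar{\sigma}_\epsilon\to\bar{\sigma}$ in $C^0$, so $X$ is the desired $C^{1,1}$ isometric embedding of $(S^2,\bar{\sigma})$ into $\R^3$.
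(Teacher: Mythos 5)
This statement is the Guan--Li/Hong--Zuily theorem, which the paper quotes rather than reproves; the right comparison is with the paper's own machinery for the hyperbolic analogue (Theorem A), whose formulas specialize to the Euclidean case at $\kappa=0$, $f\equiv 1$. Your overall architecture---approximate $\bar{\sigma}$ by strictly positively curved metrics, invoke Weyl's theorem, prove a second fundamental form bound that does not degenerate with the approximation parameter, and conclude by Arzel\`a--Ascoli---is exactly that architecture (the paper's path is a normalized Ricci flow with $K>-\kappa$ for $t>0$, its closedness step is the estimate of Theorem B, then compactness). The genuine gap is that the uniform estimate, which is the entire content of the theorem, is asserted rather than proved: you name the right shape of test function, $\log\kappa_{\max}+\vphi(X_\epsilon)$, and then explicitly defer ``showing that the degenerate terms do not destabilize the second-order estimate'' to Guan--Li or Hong--Zuily. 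That deferred step is precisely where the classical Weyl/Pogorelov maximum-principle argument breaks when $K$ is allowed to vanish, and it is what Theorem B supplies: with $F=\log\lambda+\alpha\frac{|X|^2}{2}$ (any $\alpha>0$ in the Euclidean specialization), at an interior maximum where one may assume $\lambda>2\mu$, the second-derivative-of-eigenvalue formula, the Codazzi equations, and the twice-differentiated Gauss equation produce the favorable term $\frac{2\lambda}{\lambda-\mu}(\lambda_2)^2$, which absorbs the dangerous quadratic-gradient contribution $-\lambda_a\lambda_b/\lambda^2$ of the test function; what survives is $0\geq (K+\alpha f^2)\lambda+O(1)$, with the $O(1)$ depending only on $\|K\|_{C^2}$ and $\|f\|_{C^0}$, hence $\lambda\leq C$ even at points where $K=0$. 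The choice of test function is not a detail one can wave at: the Pogorelov-style function $\lambda/\langle X,\nu\rangle^{\alpha}$ treated elsewhere in the paper closes only under hypotheses on $\min K$ and $\min(K+\kappa)$, i.e.\ exactly the quantities that degenerate in your limit $\epsilon\rw 0$. Without carrying out such a computation, your step two is an appeal to the theorem you are trying to prove.

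A secondary gap is the approximating family itself. Gauss curvature is not a convex function of the metric, so $\bar{\sigma}+\epsilon g_0$ with $g_0$ round need not have positive curvature; and a conformal factor that ``adds $\epsilon$ to $K$'' cannot be prescribed naively, since $K_u=e^{-2u}(K-\Delta u)$ is constrained by Gauss--Bonnet. The paper's device avoids both problems: run the normalized Ricci flow from $\bar{\sigma}$; since $\partial_t R=\Delta R+R(R-r)$ with $r>0$ on $S^2$, the (strong) maximum principle gives $K>0$ for all $t>0$, while Struwe's theorem provides convergence and uniform control of the conformal factors, so the flow itself is the approximating family, and the curvature estimate applied along $t\rw 0$ plus Arzel\`a--Ascoli yields the $C^{1,1}$ limit. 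This part of your plan is repairable, but as written the smoothing step is not justified. (The limit being an embedding, which you pass over, also deserves a remark, but it is minor compared with the two points above.)
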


In this paper, we follow Nirenberg's approach to solve the isometric embedding problem into hyperbolic space by the continuity method. Our main result is
\begin{Theorem A}
Let $\bar{\sigma}$ be a smooth metric on $S^2$ with Gauss curvature $K \geq-\kappa$. Then there exists a $C^{1,1}$ isometric embedding $i: (S^2, \bar{\sigma}) \rw \mathbb{H}^3_{-\kappa}$. 
\end{Theorem A}

Given an embedding $i:S^2 \rw \mathbb{H}^3_{-\kappa}$ and a trivialization $\varphi: \mathbb{H}^3_{-\kappa} \rw \mathbb{R}^3$, let $\mr= \varphi \circ i$ be the ``position vector". After choosing local coordinates $\{u^a\}, a=1,2$ on $S^2$, the isometric embedding equation can be written as the nonlinear first order partial differential system 
\begin{align}
g_{ij} (\mr(x)) \frac{\pl \mr^i}{\pl u^a}(x) \frac{\pl \mr^j}{\pl u^b} = \bar{\sigma}_{ab}(x).
\end{align}
Unlike the case of Euclidean space, the form of the equation depends on the choice of the trivialization. For most of this paper, we choose to use the static coordinate chart of $\mathbb{H}^3_{-\kappa}$ in which the manifold is identified with $\mathbb{R}^3$ and the metric has the form
\begin{align*}
g = \frac{1}{f^2} dr^2 + r^2 g_{S^2},
\end{align*}
where $f= \sqrt{1+\kappa r^2}$ is called the static potential and $g_{S^2}$ denotes the standard metric on $S^2.$ 

%

We now outline the proof of Theorem A. First of all, the normalized Ricci flow \cite{Hamilton88, Chow91} provides an one-parameter family of smooth metrics $\sigma_t, t \in [0,\infty)$ on $S^2$ such that $\sigma_0 = \bar{\sigma}$ and $\sigma_t$ converges to a metric $\sigma_\infty$ with constant Gauss curvature. Moreover, the Gauss curvature of $\sigma(t)$ is greater than $-\kappa$ for $t>0.$ 

Let $I \subset [0,\infty)$ be the set of parameters such that $\sigma_t $ can be isometrically embedded into  $\mathbb{H}^3_{-\kappa}$ as a closed convex $C^{1,1}$ surface. The goal is to show that $I$ is nonempty, open, and closed. As a result, $I = [0,\infty)$. 

We start with the openness part which states that if a metric $\sigma$ can be isometrically embedded into $\mathbb{H}^3_{-\kappa}$, so can any small perturbation of $\sigma$. The first step is to understand the infinitesimal deformation. We show that all infinitesimal deformations come from the isometry of hyperbolic space. The next step is to solve the linearized equation. Remarkably, Nirenberg reduced the system of equations to a single scalar equation and used Hilbert theory to solve it. We modify Nirenberg's argument so that it fits the geometry of hyperbolic space. At last, we utilize the contraction mapping principle to solve the nonlinear equation.

Nonemptiness follows from openness. Since the limit metric $\sigma_\infty$ can be isometrically embedded into $\mathbb{H}^3_{-\kappa}$ (with the same image as a round sphere), $[T_0,\infty) \subset I$ for some large $T_0$. 

The closedness part boils down to an a priori estimate of the mean curvature $H$ of convex surfaces in $\mathbb{H}^3_{-\kappa}$. Weyl's original approach is to apply the maximum principle to the mean curvature of the surface, which fails when the metric has negative Gauss curvature somewhere. We manage to find a test function to overcome the difficulty. We show that
\begin{Theorem B}
Let $\Sigma$ be a closed convex surface in $\mathbb{H}^3_{-\kappa}$, normalized so that $\Sigma$ is centered at the origin. Then
\begin{align*}
\max_\Sigma H \leq C,
\end{align*}
for some constant $C$ depending only on $\|f\|_{C^0(\Sigma)}$, and $\| K\|_{C^2(\Sigma)}.$ Here, $f=\sqrt{1+\kappa r^2}$ is the static potential and $K$ denotes the Gauss curvature of $\Sigma$.
\end{Theorem B}
During the preparation of this paper, we learned that Guan and Lu \cite{Guan-Lu13} independently obtained the same estimate as in Theorem B.
Note that for convex surfaces in hyperbolic space with sectional curvature $-1$, Chang and Xiao \cite{Chang-Xiao12} proved an a priori bound on the mean curvature with an argument based on Pogorelov's estimate using the additional assumption that the set $\{K= -1 \}$ consists of only finitely many points. Our new argument does not require this finiteness condition.

With Nirenberg's estimates  for uniformly elliptic equations in two dimensions, we show that if $\bar{\sigma}$ has $K > -\kappa,$ the isometric embedding constructed by the continuity method is actually smooth. Hence we recover Pogorelov's result. 

The Weyl theorem plays a prominent role in the study of quasilocal mass in general relativity. Let $\Sigma$ be a 2-surface in an initial data set. Suppose the induced metric $\sigma$ has positive Gauss curvature. By Weyl's theorem, there exists a unique (up to congruence) isometric embedding $i: (\Sigma, \sigma) \rw \mathbb{R}^3$. Let $H_0$ and $H$ be the mean curvature of $i(\Sigma) \subset \R^3$ and $\Sigma$ in the initial data set, respectively. The Brown-York mass \cite{Brown-York92, Brown-York93} is defined as
\begin{align*}
m_{BY}(\Sigma) = \frac{1}{8\pi} \left( \int_\Sigma H_0 d\mu - \int_\Sigma H d\mu \right) .
\end{align*}
In \cite{Wang-Yau08}, Wang and Yau proposed a new quasilocal mass for spacelike 2-surfaces in spacetime. The definition requires the existence of an isometric embedding of the surface into Minkowski spacetime. It is also interesting to consider other backgrounds. Theorem C provides a necessary condition for the existence of isometric embeddings into the cosmological chart of anti-de Sitter spacetime with prescribed cosmological time function.

\begin{Theorem C}
Given a smooth metric $\sigma$ and a smooth function $s$ on $S^2.$ Let $\na$ and $\Delta$ denote the gradient and Laplace operator with respect to $\sigma$, respectively. Suppose the Gauss curvature $K$ and the function $s$ satisfy 
\begin{align}
K +  \frac{S'}{S} \Delta s - \lt( \frac{S''}{S} - \frac{S'^2}	{S^2} \rt) |\nabla s|^2  + 
 ( 1+|\nabla s|^2 )^{-1} \lt(\frac{ \det(\nabla^2 s) }{\det \sigma} - \frac{S'}{S} \nabla^a s \nabla^b s\nabla_a \nabla_b s\rt)>-\kappa \label{Gauss curvature of the projection}
\end{align}
where $S(t) = \cos (\sqrt{\kappa} t)$ is the scale factor of anti-de Sitter spacetime in its cosmological chart. Then there exists a unique spacelike isometric embedding $i :(S^2,\sigma) \rw AdS$ with prescribed cosmological time function $s$.
\end{Theorem C}
 
The rest of this paper is organized as follows. In section 2, we describe the Killing and conformal Killing vector fields on hyperbolic space and list the necessary formulae. In section 3, we study the infinitesimal rigidity of convex surfaces in $\mathbb{H}^3_{-\kappa}$. Section 4 is the most important of the paper. In this section, we construct a path of metrics and prove openness on this path. Then we establish the a priori estimate which proves Theorem B. Together these results provide a proof of Theorem A. Finally, in the last section we discuss isometric embeddings into the anti-de Sitter spacetime and the proof of Theorem C.

{\it Acknowledgment}. We are indebted to Professor Mu-Tao Wang for his encouragement, insightful comments and assistance through our work. We would also like to thank Po-Ning Chen, Xiangwen Zhang and Thomas Nyberg for informative discussions, and professors Pengfei Guan and Joel Spruck for their interest in our work.

\section{The Killing vectors on hyperbolic space}
In the hyperboloid model, $\mathbb{H}^n_{-\kappa}$ is identified as a hypersurface in $
\mathbb{R}^{n,1}$ given by $-(x^0)^2 + (x^1)^2 + \cdots + (x^n)^2 = -\frac{1}{\kappa}.$ The projection of the hyperboloid onto the hyperplane $\{x_0=0\}$ provides the static coordinates $(\mathbb{R}^n,g)$ of $\mathbb{H}^n_{-\kappa}.$ In polar coordinates, the induced metric is expressed as
\begin{align*}
g= \frac{1}{1+\kappa r^2}dr^2 + r^2 g_{S^{n-1}}. 
\end{align*}
The hyperbolic space admits a conformal Killing vector field 
\begin{equation}\label{X}
X = r \sqrt{1+\kappa r^2} \frac{\pl}{\pl r} = rf \frac{\partial}{\partial r},
\end{equation}
where as earlier the function $f=\sqrt{1+\kappa r^2}$  
is the static potential. In the hyperboloid model, the static potential can be rewritten as $f=\sqrt{\kappa} x^0$ and we have that
$$
X = \sqrt{\kappa}\left( \left( (x^0)^2 - \frac{1}{\kappa} \right) \frac{\pl}{\pl x^0} + x^0 \sum_{i=1}^3 x^i \frac{\pl}{\pl x^i}\right).
$$

Let $D$ denote the covariant derivative on the hyperbolic space.
\begin{prop} The gradient of the static potential $f=\sqrt{1+\kappa r^2}$ and the covariant derivative of the conformal Killing vector field $X=rf\frac{\partial}{\partial r}$ on $\mathbb{H}^n_{-\kappa}$ are the following:
\begin{align}
Df =\kappa X \label{grad f};\\
D_\xi X =f\xi \label{conformal Killing}.
\end{align}
\end{prop}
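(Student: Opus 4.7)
The proposition records two purely local identities on $\mathbb{H}^n_{-\kappa}$, and the plan is to verify each by a direct computation in the static polar chart in which
$$g = \frac{1}{f^2}\,dr^2 + r^2\,g_{S^{n-1}}, \qquad f = \sqrt{1+\kappa r^2}.$$
These are coordinate-level statements, so the bulk of the work is bookkeeping, and the main thing to keep straight is the choice of sign conventions.

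For the gradient identity \eqref{grad f} I would just observe that $f$ depends only on $r$, with $\partial_r f = \kappa r/f$, and $g^{rr} = f^2$. Hence
$$Df = g^{rr}(\partial_r f)\,\partial_r = f^2\cdot\frac{\kappa r}{f}\,\partial_r = \kappa r f\,\partial_r = \kappa X,$$
which gives \eqref{grad f} with essentially no effort.

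For \eqref{conformal Killing} I would begin by listing the nonzero Christoffel symbols of $g$ that come from the warping; a quick calculation gives
$$\Gamma^r_{rr} = -\frac{\kappa r}{f^2}, \qquad \Gamma^r_{ab} = -r f^2\,\bar g_{ab}, \qquad \Gamma^a_{rb} = \frac{1}{r}\,\delta^a_b,$$
where $a,b$ are angular indices and $\bar g$ is the round metric on $S^{n-1}$. Since $X$ has only a radial component $X^r = rf$, substituting into $(D_\xi X)^k = \xi^j\partial_j X^k + \Gamma^k_{ij} X^i \xi^j$ yields, after the cancellation $\kappa r^2/f - \kappa r^2/f = 0$ in the radial slot, that $(D_\xi X)^r = f\xi^r$; and via $\Gamma^a_{rb} = \delta^a_b/r$ that $(D_\xi X)^a = (rf)(\xi^a/r) = f\xi^a$. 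Combined, this is exactly $D_\xi X = f\xi$.

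As an alternative approach I would keep in reserve the hyperboloid picture already set up in the section. There $f = \sqrt{\kappa}\,x^0$, and a short check shows that $X$ equals $-1/\sqrt{\kappa}$ times the tangential projection of the parallel ambient vector $\partial_{x^0}$. Using the Gauss--Weingarten formula with the unit timelike normal $N = \sqrt{\kappa}\,P$, which satisfies $\bar D_\xi N = \sqrt{\kappa}\,\xi$ for tangent $\xi$, one obtains $D_\xi X = \sqrt{\kappa}\,x^0\,\xi = f\xi$ immediately, and \eqref{grad f} drops out by pairing $Df$ against a tangent vector in the Lorentzian inner product. The only obstacle in either route is keeping track of signs — the factor $g^{rr} = f^2$ in the static chart, or the $\langle N,N\rangle = -1$ normalization in the hyperboloid model — but nothing is substantive.
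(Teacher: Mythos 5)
Your proposal is correct and its main argument is essentially the paper's own proof: a direct computation of $Df$ and $D_\xi X$ in the static polar chart, where your explicitly listed Christoffel symbols ($\Gamma^r_{rr}=-\kappa r/f^2$, $\Gamma^a_{rb}=\delta^a_b/r$) and the cancellation $\kappa r^2/f-\kappa r^2/f=0$ are exactly the steps the paper performs more tersely via $D_{\partial_r}\partial_r$ and $D_{\partial_{\theta^i}}\partial_r$. The hyperboloid/Gauss--Weingarten alternative you keep in reserve is a valid and pleasant shortcut (it identifies $X$ with $-\kappa^{-1/2}(\partial_{x^0})^T$), but since your primary route coincides with the paper's, no further comparison is needed.
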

\begin{proof}
We calculate in polar coordinates. Let $\theta^i, i=1, \ldots, n-1$ be any local coordinates on $S^{n-1}.$ We write $\xi = \xi^r \frac{\pl}{\pl r} + \xi^i \frac{\pl}{\pl \theta^i}$.
\begin{align*}
Df &= g^{rr} \pl_r f \frac{\pl}{\pl r} \\
&= \kappa r f \frac{\pl}{\pl r}.\\
D_\xi X &= D_\xi (rf \frac{\pl}{\pl r})\\
&= \xi^r f \frac{\pl}{\pl r} + \frac{\kappa r^2}{f} \xi^r \frac{\pl}{\pl r} + rf \left( \xi^r D_\frac{\pl}{\pl r} \frac{\pl}{\pl r} + \xi^i D_\frac{\pl}{\pl \theta^i} \frac{\pl}{\pl r}  \right)\\
&= f \left( \xi^r \frac{\pl}{\pl r} + \xi^i \frac{\pl}{\pl \theta^i}  \right).
\end{align*}\end{proof}

On the hyperboloid, besides the obvious rotational symmetry, there are translational symmetries coming from the isometric action of the off-diagonal part of $O(1,n).$ We take a curve tangent to the hyperboloid:
\begin{align}\label{curve of isometric action}
\begin{pmatrix}
1 & t &  0 &      & 0 & \\
t & 1 &  0 &      &   & \\
0& 0 & 1&  &   & \\
 &   &     &\ddots   &   & \\
 0 &      &    &   &1 
\end{pmatrix}
\begin{pmatrix}
x^0\\
x^1\\
\vdots\\
\\
x^n
\end{pmatrix},
\end{align}
and project the curve onto $\mathbb{R}^n.$ The tangent vector of this curve gives a Killing vector field $\frac{f}{\sqrt{\kappa}} \frac{\pl}{\pl x^1}$ at $(x^1, x^2, \cdots, x^n).$ Hence any translational Killing vector field in the static coordinates is of the form $f Z_0 $ for some constant vector field $Z_0 = \sum_{i=1}^n a^i \frac{\pl}{\pl x^i}.$

On Euclidean space, the constant vector field is characterized by $dZ=0.$ On hyperbolic space, we introduce a twist covariant derivative in order to get a similar characterization.

\begin{defn}
The twist covariant derivative on $\mathbb{H}^n_{-\kappa}$ is defined as 
$$
\tilde{D}_\xi Z = D_\xi Z + \frac{\kappa}{f} \langle \xi,Z \rangle X,
$$
where $\langle\cdot,\cdot \rangle = g(\cdot,\cdot)$.
\end{defn}

\begin{prop}\label{const vec}
The constant vector field in hyperbolic space is characterized by $\tilde{D} Z=0$. I.e., $\tilde{D} Z=0$ if and only if $Z = \sum a_i \frac{\pl}{\pl x^i}$ in the static coordinates.
\end{prop}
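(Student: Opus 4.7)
My plan is to verify the forward implication by a direct computation in Cartesian static coordinates, and then to deduce the converse from uniqueness of solutions to a first-order linear system.

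For the forward direction, I first rewrite the static metric in Cartesian coordinates. Using $r^2 = \sum (x^i)^2$ and $dr = (x^i/r)\, dx^i$, one finds
\begin{align*}
g_{ij} = \delta_{ij} - \frac{\kappa x^i x^j}{f^2}, \qquad g^{ij} = \delta^{ij} + \kappa x^i x^j,
\end{align*}
and a routine Levi--Civita computation yields
\begin{align*}
\Gamma^k_{ij} = -\kappa\, \delta_{ij} x^k + \frac{\kappa^2 x^i x^j x^k}{f^2}.
\end{align*}
As a consistency check, this together with $X^k = f x^k$ reproduces the conformal Killing identity $D_\xi X = f\xi$ of \eqref{conformal Killing}. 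Now for $Z = a^i \partial/\partial x^i$ with the $a^i$ constant, $Z^k = a^k$ is constant, so
\begin{align*}
(D_\xi Z)^k = \Gamma^k_{ij} \xi^i a^j = -\kappa (\xi \cdot a)\, x^k + \frac{\kappa^2 (x \cdot \xi)(x \cdot a)}{f^2}\, x^k,
\end{align*}
where $\cdot$ denotes the Euclidean dot product on $\R^n$. Since $X^k = f x^k$ and $\langle \xi, Z \rangle = g_{ij}\xi^i a^j = \xi \cdot a - \kappa (x \cdot \xi)(x \cdot a)/f^2$, the quantity $(\kappa/f) \langle \xi, Z \rangle X^k$ cancels $(D_\xi Z)^k$ term by term, giving $\tilde{D} Z = 0$.

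For the converse, rewrite $\tilde{D} Z = 0$ as $D_\xi Z = -(\kappa/f) \langle \xi, Z\rangle X$, which expresses every covariant derivative of $Z$ pointwise as a linear function of $Z$ itself. Along any smooth curve in the connected manifold $\mathbb{H}^n_{-\kappa}$ this becomes a linear first-order ODE for the components of $Z$, so $Z$ is determined by its value at a single point. Hence the solution space has dimension at most $n$, and the $n$ linearly independent solutions $\partial/\partial x^i$ supplied by the forward direction must span it. The only genuine work is the algebraic cancellation in the forward direction, which collapses cleanly because $f^2 = 1 + \kappa r^2$; there is no analytic or geometric obstacle beyond bookkeeping.
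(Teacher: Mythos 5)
Your proof is correct, but it takes a genuinely different route from the paper. The paper works in the hyperboloid model: it observes that every constant field is a combination of the lifts $\frac{x^i}{x^0}\frac{\pl}{\pl x^0}+\frac{\pl}{\pl x^i}$, checks by a projection computation that these are annihilated by $\tilde D$, and then gets the converse essentially for free by expanding an arbitrary $Z=\sum a_i(x)\bigl(\frac{x^i}{x^0}\frac{\pl}{\pl x^0}+\frac{\pl}{\pl x^i}\bigr)$ in this $\tilde D$-parallel frame, so that $\tilde D_\xi Z=\sum \xi(a_i)\bigl(\cdots\bigr)$ vanishes iff the $a_i$ are constant. You instead stay intrinsically in the static chart: you compute $g_{ij}=\delta_{ij}-\kappa x^i x^j/f^2$, its inverse, and $\Gamma^k_{ij}=-\kappa\,\delta_{ij}x^k+\kappa^2 x^i x^j x^k/f^2$ (all of which check out, as does your consistency check against $D_\xi X=f\xi$), verify the cancellation $D_\xi Z=-\frac{\kappa}{f}\langle\xi,Z\rangle X$ for constant $Z$ term by term, and then obtain the converse from uniqueness of $\tilde D$-parallel transport along curves plus a dimension count: evaluation at a point is injective on solutions, so the $n$ explicit solutions $\frac{\pl}{\pl x^i}$, being pointwise linearly independent, span the solution space. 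What the paper's frame argument buys is that the converse is immediate and requires no ODE or dimension-counting step; what your argument buys is independence from the ambient $\R^{n,1}$ embedding, at the cost of the Christoffel bookkeeping and the (standard, but implicit in the paper) facts that $\tilde D$ is an affine connection and that the static chart is global so the coordinate fields are everywhere linearly independent.
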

\begin{proof}
We work in the hyperboloid model. From (\ref{curve of isometric action}), every constant vector is a linear combination of $\{ \frac{x^i}{x^0} \frac{\pl}{\pl x^0} + \frac{\pl}{\pl x^i} \}_{i=1}^n.$ Moreover, the tangent space is spanned by translational Killing vectors $\{ x^i \frac{\pl}{\pl x^0} + x^0 \frac{\pl}{\pl x^i} \}_{i=1}^n.$ Therefore, it suffices to check 
\begin{align*}
& \tilde{D}_{x^j \frac{\pl}{\pl x^0} + x^0 \frac{\pl}{\pl x^j}} \left(\frac{x^i}{x^0} \frac{\pl}{\pl x^0} + \frac{\pl}{\pl x^i} \right) \\
&= \left[ \left(-\frac{x^jx^i}{(x^0)^2} + \delta^{ij} \right) \frac{\pl}{\pl x^0} \right]^T  + \frac{\sqrt{\kappa}}{x^0} \left( -\frac{x^jx^i}{x^0} + x_0 \delta^{ij} \right) X \\
&= \left( -\frac{x^jx^i}{(x^0)^2} + \delta^{ij} \right) \frac{\pl}{\pl x^0} - \kappa \left( -\frac{x^jx^i}{x^0} + x_0 \delta^{ij} \right) \left( x^0 \frac{\pl}{\pl x^0} + \sum_{i=1}^n x^i \frac{\pl}{\pl x^i} \right) \\
& \quad + \kappa \left( -\frac{x^jx^i}{(x^0)^2} + \delta^{ij} \right) \left( \left( (x^0)^2 - \frac{1}{\kappa} \right) \frac{\pl}{\pl x^0} + x^0 \sum_{i=1}^n x^i \frac{\pl}{\pl x^i} \right) \\
&=0,
\end{align*}
where the superscript $T$ denotes the tangent component of a vector.
On the other hand, given a vector field $Z = \sum_{i=1}^n a_i(x) \left( \frac{x^i}{x^0} \frac{\pl}{\pl x^0} + \frac{\pl}{\pl x^i}\right)$,
\begin{align*}
\tilde{D}_\xi Z = \sum_{i=1}^n \xi(a_i) \left( \frac{x^i}{x^0} \frac{\pl}{\pl x^0} + \frac{\pl}{\pl x^i}\right).
\end{align*}
Therefore $\tilde{D} Z=0$ if and only if $Z$ is a constant vector field.
\end{proof}

\section{The infinitesimal rigidity}\label{rigid}

In this section, we prove the infinitesimal rigidity of convex surfaces, which will be used in the openness part of Theorem A. 

We start with some general discussion on infinitesimal deformations. Let $\mf{r}:(N, \sigma) \rightarrow (M,g)$ be an isometric embedding of a hypersurface and $\Sigma$ be the image of the embedding. Let $E = \mr^{-1} (TM)$ be the pull-back of the tangent bundle over $N$. We abuse notation to denote the pull-back connection by $D$. Denote the differential of $\mr$ by $D\mr$ and view it as a section $D \mf{r} \in \Gamma(N, E \otimes T^*N)$. We use the Einstein summation convention of summing repeated indices. The indices $a,b,c=1,2,\cdots, n-1$ and $i,j,\ldots=1,2,\cdots,n.$ Let $\{ u^a \}$ and $\{ x^i \}$ be local coordinates on $N$ and $M$, respectively. Let $\{e_a, \nu \}$ denote a local orthonormal frame such that $\{ e_a\}$ are tangent to $N$ and $\nu$ is the unit outward normal. Let $\{ \omega^a \}$ be the dual 1-form of $\{ e_a\}$.

\begin{defn}
For $v_1 \otimes \omega_1, v_2 \otimes \omega_2 \in \Gamma(N, E \otimes T^*N),$ we define 
$$
(v_1 \otimes \omega_1) \odot (v_2 \otimes \omega_2) :=\frac{1}{2} g(v_1,v_2) (\omega_1 \otimes \omega_2 + \omega_2 \otimes \omega_1)
$$ 
and extend it linearly.
\end{defn}

Suppose there is a family of isometric embeddings $\mf{r}_t$ with $\mf{r}_0(x) = \mf{r}(x).$ We say that $\mf{r}_t$ yields a first order isometric deformation of $\mf{r}$ if 
$$
\frac{d}{dt} \lt( D \mf{r}_t \odot D \mf{r}_t \rt) \bigg|_{t=0}=0.
$$ 
Set $\tau = \frac{d \mr_t}{dt} \big|_{t=0}$.  We have the following equivalent equation.

\begin{lem}
The infinitesimal deformation equation is given by
\begin{align}
D \mf{r} \odot D \tau=0, \label{infinitesimal deformation}
\end{align} 
where $\odot$ is the symmetric product of one-forms. We call $\tau$ an infinitesimal deformation (or an isometric deformation.) 
\end{lem}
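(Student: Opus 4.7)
The plan is to differentiate the isometric identity $D\mf{r}_t \odot D\mf{r}_t = \sigma$, which holds at every $t$ since each $\mf{r}_t$ is an isometric embedding. By the Leibniz rule and the symmetry of $\odot$,
$$0 = \frac{d}{dt}\Big|_{t=0}\bigl(D\mf{r}_t \odot D\mf{r}_t\bigr) = 2\, D\mf{r} \odot \frac{d}{dt}\Big|_{t=0} D\mf{r}_t,$$
so the lemma will follow once I show $\frac{d}{dt}\big|_{t=0} D\mf{r}_t = D\tau$ as a section of $E \otimes T^*N$.

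For this identification I would promote the family to a smooth map $\mf{r}: (-\epsilon, \epsilon) \times N \to M$ and appeal to torsion-freeness of the pullback connection on $\mf{r}^* TM$. Setting $X = \pl/\pl u^a$ and noting that $\pl_t \mf{r}|_{t=0} = \tau$, one has
$$\frac{d}{dt}\Big|_{t=0}\bigl(d\mf{r}_t(X)\bigr) = D_{\pl_t}\bigl(d\mf{r}_t(X)\bigr)\Big|_{t=0} = D_X\bigl(\pl_t \mf{r}|_{t=0}\bigr) = D_X \tau,$$
where the middle equality is the standard commutation of mixed covariant derivatives for a torsion-free connection — the same identity used in deriving the first variation of arc length. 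Contracting over $a$ gives the claimed equality $\frac{d}{dt}|_{t=0} D\mf{r}_t = D\tau$.

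The whole argument is essentially bookkeeping once one pins down what $\frac{d}{dt}|_{t=0} D\mf{r}_t$ means. Since $D\mf{r}_t$ takes values in the $t$-dependent bundle $\mf{r}_t^* TM \otimes T^*N$, its $t$-derivative must be interpreted via the pullback connection rather than as a naive componentwise derivative; this is the only subtle point. As an alternative route that avoids introducing the map on $(-\epsilon,\epsilon) \times N$, the same conclusion can be verified by expanding $\frac{d}{dt}|_{t=0}\bigl(g_{ij}(\mf{r}_t)\pl_a \mf{r}_t^i\, \pl_b \mf{r}_t^j\bigr)$ in coordinates and applying the formula $(D_a \tau)^i = \pl_a \tau^i + \Gamma^i_{jk}(\mf{r}) \tau^j \pl_a \mf{r}^k$ together with the metric-compatibility identity $\pl_k g_{ij} = g_{il}\Gamma^l_{jk} + g_{jl}\Gamma^l_{ik}$; these force the Christoffel contributions from $D\tau$ to cancel precisely against the derivative of $g_{ij}(\mf{r}_t)$, leaving $2(D\mf{r} \odot D\tau)_{ab}$.
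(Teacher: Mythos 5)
Your proposal is correct, but your primary argument is organized differently from the paper's. The paper proves the lemma by brute-force coordinates: it differentiates $g_{ij}(\mf{r}_t)\,\pl_a\mf{r}_t^i\,\pl_b\mf{r}_t^j=\sigma_{ab}$ in $t$, writes out $D\mf{r}\odot D\tau$ with the Christoffel symbols, and matches the two expressions using exactly the identity $\pl_k g_{ij}=g_{il}\Gamma^l_{jk}+g_{jl}\Gamma^l_{ik}$ — in other words, the paper's proof is precisely your ``alternative route'' in the last paragraph. Your main route instead packages the same two facts invariantly: torsion-freeness of the Levi-Civita connection, applied to the map on $(-\epsilon,\epsilon)\times N$ with $[\pl_t,\pl_a]=0$, gives the symmetry $D_{\pl_t}\bigl(d\mf{r}_t(\pl_a)\bigr)=D_{\pl_a}\bigl(\pl_t\mf{r}_t\bigr)$, hence $\frac{d}{dt}\big|_{t=0}D\mf{r}_t=D\tau$; and metric compatibility of the pullback connection justifies the Leibniz step $\frac{d}{dt}(D\mf{r}_t\odot D\mf{r}_t)=2\,D\mf{r}\odot D_{\pl_t}D\mf{r}_t$ (worth stating explicitly, since $\odot$ involves $g$ — this is the invariant counterpart of the $\pl_k g_{ij}$ identity the paper uses). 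What your route buys is a shorter, coordinate-free derivation in which the cancellation of Christoffel terms is automatic, and it makes transparent that the identity $\frac{d}{dt}\big|_{t=0}(D\mf{r}_t\odot D\mf{r}_t)=2\,D\mf{r}\odot D\tau$ holds, giving the equivalence and not just one implication; what the paper's computation buys is self-containedness (no symmetry lemma for pullback connections needs to be invoked) and an explicit display of how the $\pl_k g_{ij}\,\tau^k$ term is absorbed, which is the pattern reused later when the paper derives the inhomogeneous equation (4.5). One cosmetic point: ``contracting over $a$'' should read ``since this holds for each coordinate vector $\pl/\pl u^a$, the equality holds as sections of $E\otimes T^*N$.''
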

\begin{proof}
We use  $\{u^a\}$ and  $\{x^i\}$ to denote local coordinates on $N$ and  $M$, respectively. Then 
$$
D \mr = \frac{\pl \mr^i}{\pl u^a}  \frac{\pl}{\pl x^i} \otimes du^a, \quad \tau=\frac{\pl \mf{r}_t^i}{\pl t}\bigg|_{t=0}  \frac{\pl}{\pl x^i},
$$ 
and
\begin{align*}
\sigma_{ab} du^a \odot du^b &= D\mr_t \odot D\mr_t(p) \\
&= \frac{\pl \mr_t^i}{\pl u^a}(p) \frac{\pl \mr_t^j}{\pl u^b} g_{ij}(\mr_t(p)) du^a \odot du^b.
\end{align*}
Differentiating with respect to $t$ and evaluating at $t=0$, we obtain
\begin{align*}
0 &= \lt( \frac{\pl^2 \mr^i}{\pl u^a \pl t} \frac{\pl \mr^j}{\pl u^b} g_{ij} + \frac{\pl \mr^i}{\pl u^a} \frac{\pl^2 \mr^j}{\pl u^b \pl t}g_{ij} + \frac{\pl \mr ^i}{\pl u^a}\frac{\pl \mr^j}{\pl u^b}\frac{\pl g_{ij}}{\pl x^k}\frac{\pl \mr^k}{\pl t} \rt) \bigg| _{t=0} du^a \odot du^b \\
&= \lt( \frac{\pl \tau^i}{\pl u^a}\frac{\pl \mr^j}{\pl u^b}g_{ij} + \frac{\pl \mr^i}{\pl u^a}\frac{\pl \tau^j}{\pl u^b}g_{ij} + \frac{\pl \mr^i}{\pl u^a}\frac{\pl \mr^j}{\pl u^b}\frac{\pl g_{ij}}{\pl x^k}\tau^k \rt) du^a \odot du^b.
\end{align*}
On the other hand,
\begin{align*}
D \tau = \lt( \frac{\pl \tau^j}{\pl u^b} + \frac{\pl \mr^m}{\pl u^b} \Gamma^j_{mk} \tau^k \rt) \frac{\pl}{\pl x^j} \otimes du^b.
\end{align*}
We then have
\begin{align*}
D \mr \odot D \tau 
&={1\over 2} \lt( \frac{\pl \mr^i}{\pl u^a} \lt( \frac{\pl \tau^j}{\pl u^b} + \frac{\pl \mr^m}{\pl u^b} \Gamma^j_{mk} \tau^k \rt)g_{ij} + (a,b \mbox{ symmetric}) \rt) du^a \odot du^b \\
&={1\over 2} \Big( \frac{\pl \tau^i}{\pl u^a}\frac{\pl \mr^j}{\pl u^b}g_{ij} + \frac{\pl \mr^i}{\pl u^a}\frac{\pl \tau^j}{\pl u^b}g_{ij}+ \frac{\pl \mr^i}{\pl u^a}\frac{\pl \mr^m}{\pl u^b} \left(\Gamma_{mk}^jg_{ij}+\Gamma_{ik}^jg_{mj}\right)\tau^k\Big) du^a \odot du^b  \\
&={1\over 2} \Big( \frac{\pl \tau^i}{\pl u^a}\frac{\pl \mr^j}{\pl u^b}g_{ij} + \frac{\pl \mr^i}{\pl u^a}\frac{\pl \tau^j}{\pl u^b}g_{ij}+ \frac{\pl \mr^i}{\pl u^a}\frac{\pl \mr^m}{\pl u^b} \frac{\pl g_{mi}}{\pl x^k}\tau^k\Big) du^a \odot du^b  \\
&=0,
\end{align*}
where in the second to last equality $i,m$ are symmetric as $a,b$ are symmetrized.
\end{proof}

\begin{defn}
A  solution of (\ref{infinitesimal deformation}) is called an \textit{infinitesimal deformation}. An infinitesimal deformation is \textit{trivial} if it is the restriction of some Killing vector field of $M$ on the hypersurface. An isometric embedding is called \textit{infinitesimally rigid} if it only has trivial infinitesimal deformations. 
\end{defn}

We now focus on the isometric embedding into hyperbolic space. Take $N=S^2$ and $M=\mathbb{H}^3_{-\kappa}.$ The ``position vector" $\mr$ can be replaced by the conformal Killing vector $X$ up to the static potential, which is nonzero. We henceforth write the infinitesimal deformation equation as
\begin{align}
D X \odot D \tau =0. \label{inf def} 
\end{align} 
The main result in this section is
\begin{thm}{\label{main}}
Consider an isometric embedding $\mr: (S^2, \sigma) \rw (\mathbb{H}^3_{-\kappa},g). $ Suppose that the image $\Sigma$ is  a convex hypersurface in $\mathbb{H}^3_{-\kappa}.$ Then $\mr$ is infinitesimally rigid. 
\end{thm}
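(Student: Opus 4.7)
The plan is to adapt the classical Cohn--Vossen infinitesimal rigidity argument for convex surfaces in $\R^3$ to the hyperbolic setting, making essential use of the conformal Killing field $X = rf\frac{\pl}{\pl r}$ and of the twist covariant derivative $\tilde D$ introduced in Section 2. Let $\nu$ be the outward unit normal of $\Sigma$, let $\{e_a\}$ be a local orthonormal tangent frame, and let $h_{ab}$ denote the second fundamental form. Since $D_{e_a}X = f\, e_a$ by (\ref{conformal Killing}), the defining equation $DX \odot D\tau = 0$ collapses pointwise to the antisymmetry relation $\langle D_{e_a}\tau, e_b\rangle + \langle D_{e_b}\tau, e_a\rangle = 0$, so the tangential part of $D\tau$ is skew. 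On a two-dimensional surface this forces the existence of a vector field $Y$ along $\Sigma$, with both tangential and normal components, such that
\[
D_{e_a}\tau \;=\; Y \times e_a,
\]
where $\times$ denotes the cross product in the oriented Riemannian $3$-manifold $\mb H^3_{-\kappa}$; equivalently, $f\, D_{e_a}\tau = Y\times D_{e_a}X$.

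The next step is to extract the equation satisfied by $Y$. I would differentiate the identity $D_{e_a}\tau = Y\times e_a$ in a second tangent direction $e_b$ and antisymmetrize in $a,b$. The left-hand side then becomes the ambient curvature term $[D_{e_a},D_{e_b}]\tau$, which on $\mb H^3_{-\kappa}$ equals $-\kappa\,(\tau_b\, e_a - \tau_a\, e_b)$. Expanding the right-hand side with the Leibniz rule, the Weingarten identity $D_{e_a}\nu = -h_a{}^{c}e_c$, the Codazzi equation of $\Sigma\subset\mb H^3_{-\kappa}$, and the identity $Df = \kappa X$ from (\ref{grad f}), I expect that, after using the convexity of $\Sigma$ to invert $h$, the resulting algebraic system reduces to
\[
\tilde D_{e_a}Y \;=\; D_{e_a}Y + \frac{\kappa}{f}\langle e_a,Y\rangle X \;=\; 0
\]
for every tangent direction $e_a$. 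This is the precise hyperbolic replacement for the Euclidean conclusion ``$dY = 0$'' in the Cohn--Vossen argument, and it is exactly at this point that the twist term in $\tilde D$ is forced to appear by the ambient curvature.

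Once $\tilde D Y \equiv 0$ along $\Sigma$ has been established, Proposition~\ref{const vec} identifies $Y$ with a constant vector field in the static chart, so that $fY$ is a translational Killing vector field of $\mb H^3_{-\kappa}$, by the discussion preceding Proposition~\ref{const vec}. Combining this translational contribution $fY$ with the rotational contribution encoded by the cross-product factor $Y\times e_a$ then exhibits $\tau$ as the restriction to $\Sigma$ of a Killing vector field on $\mb H^3_{-\kappa}$, yielding the desired infinitesimal rigidity. I expect the main obstacle to lie in the middle step: the curvature-plus-Codazzi computation must produce precisely the combination $\tfrac{\kappa}{f}\langle e_a, Y\rangle X$, with the correct sign, and the convexity hypothesis (strict positivity of $h$) is used in an essential way to solve the linear system for $\tilde D_{e_a}Y$, mirroring the role of positive Gauss curvature in the Euclidean Cohn--Vossen theorem.
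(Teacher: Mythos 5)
There is a genuine gap at your middle step, and it is the heart of the matter. After differentiating $D_{e_a}\tau = Y\times e_a$ and antisymmetrizing in $a,b$, you obtain a single $E$-valued 2-form identity, i.e.\ three scalar equations, whereas the unknown first derivatives $D_{e_1}Y, D_{e_2}Y$ comprise six scalar functions; no amount of inverting $h$ can turn this into the pointwise conclusion $\tilde D_{e_a}Y=0$. What the antisymmetrized identity actually yields is exactly what the paper extracts: (i) a tangentiality statement plus a trace relation for the derivative of the (rescaled) rotation vector (Lemma \ref{bY tangential}), and (ii), writing $D\bY - \frac{\kappa}{f^2}\langle \bY,X\rangle DX = B^a_b\, e_a\,\omega^b$, the single algebraic relation $B^c_1 h_{2c}-B^c_2 h_{1c}=0$, which together with convexity ($\det h = K+\kappa>0$ by Gauss) gives only the inequality $\det B\le 0$. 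To upgrade this to $B\equiv 0$ one needs a \emph{global} ingredient: the paper integrates the exact form $\Psi$ of Lemma \ref{Psi} over the closed surface and uses Stokes' theorem to get $\int_\Sigma \frac{\langle X,\nu\rangle}{f}\det B = 0$, whence $\det B\equiv 0$ and then $B\equiv 0$ by the pointwise algebra. Your proposed purely local argument cannot be repaired without such an integral formula: if it were valid it would prove infinitesimal rigidity of any convex cap with boundary, which is false, so the use of the closedness of $\Sigma$ through a Stokes-type identity is not optional bookkeeping but the essential mechanism (it is also where the specific test form built from $X$, $f$ and $\bY$ earns its keep).

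A secondary inaccuracy: the closing condition you aim for, $\tilde D Y=0$, is not the right target. The rotation vector of the Killing field $Y_0\times X+fZ_0$ is $\frac{1}{f^3}\bigl(Y_0+\kappa\langle Y_0,X\rangle X\bigr)$ (Lemma \ref{rotation vector of a Killing vector}), which is \emph{not} $\tilde D$-parallel; the correct characterization one must land on is $D\bY=\frac{\kappa}{f^2}\langle\bY,X\rangle DX$ for $\bY=f^3Y$ (Lemma \ref{characterization}), after which one subtracts the corresponding rotation and uses $\tilde D$-parallelism of the \emph{difference} to produce the translational part $fW_0$. Relatedly, the paper defines the rotation vector through $\tilde D(\tau/f)=Y\times DX$ rather than $D\tau=Y\times e_a$; your unweighted normalization is admissible pointwise but generates extra $df=\frac{\kappa}{f}\langle X,DX\rangle$ terms in every subsequent differentiation, and the clean flatness of $\tilde D$ (used to integrate at the end) is lost.
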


Before proving this theorem we need to establish some basic results. The fiber of $E$ is a 3-dimensional inner product space. We can define the usual cross product $\times$ once we fix an orientation. The identity $u \times (v \times w) = (u \cdot w)v - (u \cdot v)w$ will be useful. First we need a definition.

\begin{defn}
We define the inner product and cross product for differential forms valued in $E\otimes T^*\mathbb{H}^3_{-\kappa}$ by
\begin{align*}
\langle v_1 \otimes \omega_1,v_2 \otimes \omega_2 \rangle = \langle v_1, v_2 \rangle \otimes (\omega_1 \wedge \omega_2),\\
(v_1 \otimes \omega_1) \times (v_2 \otimes \omega_2) := (v_1 \times v_2) \otimes (\omega_1 \wedge \omega_2),
\end{align*}
and extend it linearly. We also have the pull-back twisted covariant derivative $\tilde{D}: \Gamma(S^2, E) \rightarrow \Gamma(S^2, E \otimes T^*S^2)$ defined as
\begin{align*}
\tilde{D}_\xi Z := D_\xi Z + \frac{\kappa}{f} \langle \mr_*(\xi),Z \rangle X
\end{align*}
for $ \xi\in TS^2$ and $Z\in \Gamma(S^2, E).$
\end{defn}

Next, we need the following lemmas. 

\begin{lem}\label{rotation vector}
Suppose $\tau$ is an infinitesimal deformation. Then 
$$
\tilde{D} \lt( \frac{\tau}{f} \rt) = Y\times D X
$$
for some $Y,$ which is called the rotation vector of $\tau.$ 
\end{lem}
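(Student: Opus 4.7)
My plan is to reduce the identity $\tilde D(\tau/f) = Y \times DX$ to a pointwise linear-algebra problem in the $3$-dimensional fiber $E_p$ and to show that the infinitesimal deformation equation is exactly the condition making the resulting linear system solvable. Using $\xi(f) = \kappa \langle X, \mr_*\xi\rangle$ from \eqref{grad f}, I would first expand
\[
\tilde D_\xi(\tau/f) \;=\; \frac{1}{f}\, D_\xi \tau \;-\; \frac{\kappa}{f^2}\,\langle X, \mr_*\xi\rangle\,\tau \;+\; \frac{\kappa}{f^2}\,\langle \mr_*\xi, \tau\rangle\, X.
\]
Pairing this with $\mr_*\eta$ and symmetrizing in $\xi, \eta$, the two $\kappa/f^2$ correction terms exchange under $\xi \leftrightarrow \eta$ and cancel, and what remains is $(2/f)(D\mr \odot D\tau)(\xi,\eta)$. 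Since $DX = f\,D\mr$ by \eqref{conformal Killing}, the deformation equation \eqref{inf def} is equivalent to $D\mr \odot D\tau = 0$, so this symmetrized pairing vanishes. Equivalently, the tangential-to-tangential part of $\tilde D(\tau/f)$, viewed as a map $T_pN \to T_p\Sigma$ via $\mr_*$, is antisymmetric.

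At each point I would pick a local oriented orthonormal frame $\{e_1, e_2, \nu\}$ of $E$ adapted to $\Sigma$ with $e_1 \times e_2 = \nu$. Then $DX = f\,(e_1 \otimes \omega^1 + e_2 \otimes \omega^2)$ by \eqref{conformal Killing}, and for $Y = Y^1 e_1 + Y^2 e_2 + Y^\nu \nu$ a direct computation gives
\[
(Y \times DX)(e_1) = f Y^\nu e_2 - f Y^2 \nu, \qquad (Y \times DX)(e_2) = -f Y^\nu e_1 + f Y^1 \nu.
\]
Writing $\tilde D_{e_b}(\tau/f) = \alpha^a_b\, e_a + \beta_b\, \nu$, the antisymmetry from the previous step forces $\alpha^1_1 = \alpha^2_2 = 0$ and $\alpha^2_1 = -\alpha^1_2$, so I can consistently set $Y^\nu = \alpha^2_1/f$, $Y^1 = \beta_2/f$, and $Y^2 = -\beta_1/f$. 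Matching components then yields $\tilde D(\tau/f) = Y \times DX$ pointwise; since these coefficients depend intrinsically on $\tilde D(\tau/f)$ and on the inner product on $E$, the resulting section $Y$ is globally well-defined on $\Sigma$.

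The main obstacle is the cancellation in the first step: one has to track $\xi$ and $\eta$ carefully through the three terms of $\tilde D_\xi(\tau/f)$ and verify that the twist in $\tilde D$ is precisely tuned to absorb the correction coming from differentiating $1/f$. Once that cancellation is in hand and combined with \eqref{inf def}, the rest reduces to elementary linear algebra in the $3$-dimensional inner product space $E_p$.
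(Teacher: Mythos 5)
Your proposal is correct and follows essentially the same route as the paper: expand $\tilde{D}(\tau/f)$ using $Df=\kappa X$, observe that the twist and $d(1/f)$ corrections cancel upon symmetrization so that the deformation equation makes the tangential part antisymmetric, and then solve for $Y$ componentwise in an oriented orthonormal frame $\{e_1,e_2,\nu\}$, exactly as in the paper's choice $Y=\frac{1}{f}(B_2e_1-B_1e_2+A_{12}\nu)$. Your added remark on uniqueness/global well-definedness of $Y$ is a harmless refinement the paper leaves implicit.
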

\begin{proof}
Choose a local orthonormal frame $\{e_1,e_2, \nu\}$. Let $\omega^1, \omega^2$ be the dual of $e_1,e_2$. By (\ref{grad f}), we have
\begin{align*}
\tilde{D}(\frac{\tau}{f}) &= \lt[ D_{e_a} (\frac{\tau}{f}) + \frac{\kappa}{f} \langle e_a,\frac{\tau}{f} \rangle X \rt] \omega^a \\
&= \lt[ \frac{D_{e_a}\tau}{f} + e_a(\frac{1}{f})\tau + \frac{\kappa}{f^2}\langle e_a,\tau \rangle \langle X,e_i \rangle e_i \rt]\omega^a \\
&= \lt[ \frac{D_{e_a}\tau}{f} - \frac{\kappa}{f^2} \langle X,e_a \rangle \langle \tau,e_i \rangle e_i + \frac{\kappa}{f^2} \langle \tau ,e_a \rangle \langle X,e_i \rangle e_i \rt]\omega^a \\
&= \lt[ A_{ab}e_b + B_a \nu \rt]\omega^a.
\end{align*}
From (\ref{inf def}),
\begin{align*}
D_{e_a}\tau \cdot e_b + D_{e_b}\tau \cdot e_a =0.
\end{align*}
Hence $A_{ab}$ is anti-symmetric and $\tilde{D}(\frac{\tau}{f}) = Y \times D X$ for  $Y=\frac{1}{f}(B_2 e_1 - B_1 e_2 + A_{12}\nu).$
\end{proof}

When we restrict a Killing vector field of hyperbolic space to the surface, we get an infinitesimal deformation $\tau = Y_0 \times X + f Z_0$. We compute the rotation vector for such $\tau$.
\begin{lem}\label{rotation vector of a Killing vector}
For a Killing vector $\tau = Y_0 \times X + f Z_0$, where $Y_0$ and $Z_0$ are constant vectors, its rotation vector is $\frac{1}{f^3} \lt( Y_0 + \kappa \langle Y_0,X \rangle X \rt)$.
\end{lem}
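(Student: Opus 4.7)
The plan is to compute $\tilde{D}_\xi(\tau/f)$ directly from the definition for $\tau = Y_0 \times X + f Z_0$ with $\xi$ an arbitrary tangent vector to the surface, then recognize the result in the form $f(Y \times \xi) = Y \times DX(\xi)$.

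First I handle the $fZ_0$ piece. Since $Z_0$ is constant, Proposition~\ref{const vec} gives $\tilde{D}_\xi Z_0 = 0$, i.e., $D_\xi Z_0 = -\frac{\kappa}{f}\langle \xi, Z_0\rangle X$. A short computation then shows that the entire contribution of $fZ_0$ to $\tilde{D}_\xi(\tau/f) = \tilde{D}_\xi\bigl((Y_0\times X)/f + Z_0\bigr)$ cancels between $D_\xi Z_0$ and the twist term $\frac{\kappa}{f}\langle \xi, Z_0\rangle X$.

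Next I treat $W := Y_0 \times X$. Using the Leibniz rule together with (\ref{conformal Killing}) and the fact that $D_\xi Y_0$ is parallel to $X$ (so $(D_\xi Y_0)\times X = 0$), I get
\begin{align*}
D_\xi W = Y_0 \times D_\xi X = f\,(Y_0 \times \xi).
\end{align*}
Combining with $\xi(f) = \kappa\langle X,\xi\rangle$ from (\ref{grad f}) and adding the twist correction yields
\begin{align*}
\tilde{D}_\xi(\tau/f) = Y_0 \times \xi + \frac{\kappa}{f^2}\Bigl[\langle W,\xi\rangle X - \langle X,\xi\rangle W\Bigr].
\end{align*}

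The key algebraic step is to collapse the bracket into a single cross product with $\xi$. By the BAC--CAB identity, $(W\times X)\times \xi = \langle W,\xi\rangle X - \langle X,\xi\rangle W$, and $W\times X = (Y_0\times X)\times X = \langle Y_0,X\rangle X - |X|^2 Y_0$. Substituting $|X|^2 = r^2$ and $f^2 = 1+\kappa r^2$ then gives $1-\kappa r^2/f^2 = 1/f^2$, so everything collapses to
\begin{align*}
\tilde{D}_\xi(\tau/f) = \frac{1}{f^2}\bigl(Y_0 + \kappa\langle Y_0,X\rangle X\bigr)\times \xi.
\end{align*}
Comparing with $Y \times DX(\xi) = fY\times \xi$ from Lemma~\ref{rotation vector} gives the claimed $Y = \frac{1}{f^3}(Y_0 + \kappa\langle Y_0,X\rangle X)$.

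The only real obstacle is the vector-algebra manipulation in the middle step; once the BAC--CAB identity is applied in the right order and the static relation $f^2 - \kappa|X|^2 = 1$ is used, the formula falls out. Everything else is a straightforward application of Proposition~\ref{const vec} and equations (\ref{grad f})--(\ref{conformal Killing}).
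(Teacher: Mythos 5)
Your proof is correct and follows essentially the same route as the paper: both rest on $\tilde{D}Y_0=\tilde{D}Z_0=0$ from Proposition~\ref{const vec}, the relations (\ref{grad f})--(\ref{conformal Killing}), and the same cross-product identities, with the identity $f^2-\kappa|X|^2=1$ collapsing the final expression. The only difference is cosmetic: you evaluate on a single tangent vector $\xi$ with $D_\xi X=f\xi$, while the paper carries the computation in terms of the $E\otimes T^*S^2$-valued form $DX$.
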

\begin{proof}
By Proposition \ref{const vec}, $\tilde{D}Y_0=0$ and $\tilde{D}Z_0=0$. Thus,
\begin{align*} 
\tilde{D} \lt( \frac{\tau}{f} \rt) 
&= \frac{Y_0}{f} \times D X + \lt( (Y_0 \times X) e_a(\frac{1}{f}) + \frac{\kappa}{f} \langle e_a,\frac{Y_0 \times X}{f}\rangle X \rt) \omega^a\\
&= \frac{Y_0}{f} \times D X + \frac{\kappa}{f^3} \left(\langle DX, Y_0 \times X \rangle X - \langle D X,X \rangle (Y_0 \times X) \right)\\
&= \frac{Y_0}{f} \times D X + \frac{\kappa}{f^3}\left((Y_0 \times X) \times X \right) \times D X.
\end{align*}
This implies that
\begin{align*}
Y&= \frac{Y_0}{f} + \frac{\kappa}{f^3}(Y_0 \times X) \times X \notag\\
 &= \frac{Y_0}{f} +\frac{\kappa}{f^3} \lt( \langle Y_0,X \rangle X - \langle X,X \rangle Y_0 \rt) \notag\\
 &= \frac{Y_0}{f}(1-\frac{\kappa r^2}{f^2}) + \frac{\kappa}{f^3} \langle Y_0,X \rangle X\notag\\
 &= \frac{1}{f^3} \lt( Y_0 + \kappa \langle Y_0,X \rangle X \rt).  
\end{align*}
\end{proof}

\begin{lem} \label{characterization}
A vector $Z$ can be written as $Z_0 + \kappa \langle Z_0,X \rangle X$ for some constant vector $Z_0$ if and only if 
\begin{align*}
D Z = \frac{\kappa}{f^2} \langle Z,X \rangle D X.
\end{align*}
\end{lem}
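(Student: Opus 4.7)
The plan is to prove both implications using the two structural identities $Df = \kappa X$ and $D_\xi X = f\xi$ from Proposition 2.1, together with the constancy criterion of Proposition 2.3 ($\tilde D Z_0 = 0$ if and only if $Z_0$ is constant), and the algebraic identity $\langle X, X\rangle = r^2 = (f^2-1)/\kappa$. A useful preliminary observation, used in both directions, is that if $Z = Z_0 + \kappa \langle Z_0, X\rangle X$ then $\langle Z, X\rangle = f^2 \langle Z_0, X\rangle$; equivalently, setting $Z_0 := Z - (\kappa/f^2)\langle Z, X\rangle X$ yields $\langle Z_0, X\rangle = \langle Z, X\rangle / f^2$. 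This bridge between $Z$ and $Z_0$ is what lets both directions be compared against the same right-hand side.

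For the forward direction, assume $Z = Z_0 + \kappa \langle Z_0, X\rangle X$ with $Z_0$ constant. I would differentiate term by term. First, the definition of $\tilde D$ and the vanishing $\tilde D Z_0 = 0$ give $D_\xi Z_0 = -(\kappa/f)\langle \xi, Z_0\rangle X$. Second, the scalar $\langle Z_0, X\rangle$ differentiates (using $D_\xi X = f\xi$, $\langle X, X\rangle = r^2$, and $1 + \kappa r^2 = f^2$) to $f^{-1}\langle Z_0, \xi\rangle$. Third, the factor $X$ contributes $f\xi$ via $D_\xi X = f\xi$. The first two contributions cancel cleanly, leaving $D_\xi Z = \kappa f\langle Z_0, X\rangle\,\xi$, which by the preliminary observation equals $(\kappa/f^2)\langle Z, X\rangle D_\xi X$.

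For the converse, given $DZ = (\kappa/f^2)\langle Z, X\rangle DX$, I would define $Z_0 := Z - (\kappa/f^2)\langle Z, X\rangle X$ and verify $\tilde D Z_0 = 0$; Proposition 2.3 then forces $Z_0$ to be constant, and the preliminary observation recovers $Z = Z_0 + \kappa\langle Z_0, X\rangle X$. To compute $\tilde D Z_0$, distribute $D_\xi$ across the defining expression: the first term uses the hypothesis on $DZ$, the middle term uses $Df = \kappa X$ and $D_\xi X = f\xi$ to differentiate the scalar coefficient $\langle Z, X\rangle / f^2$ (after simplifying the inner product derivative via $\langle D_\xi Z, X\rangle + \langle Z, D_\xi X\rangle$), and the last term uses $D_\xi X = f\xi$ again. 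Finally, add the twist correction $(\kappa/f)\langle \xi, Z_0\rangle X$ and expand $\langle \xi, Z_0\rangle = \langle \xi, Z\rangle - (\kappa/f^2)\langle Z, X\rangle\langle \xi, X\rangle$; the resulting four terms pair off and cancel exactly.

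The only real obstacle is bookkeeping: one must keep careful track of the powers of $f$ and of the inner products $\langle Z, X\rangle$, $\langle Z_0, X\rangle$, $\langle \xi, X\rangle$ as they appear after each differentiation. There is no deeper geometric input beyond the two identities of Proposition 2.1, the twist-derivative characterization of constant vectors in Proposition 2.3, and the scalar identity $f^2 = 1 + \kappa r^2$.
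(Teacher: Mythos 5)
Your proposal is correct and follows essentially the same route as the paper: the forward direction is the same direct computation, and for the converse you, like the paper, set $Z_0 = Z - \frac{\kappa}{f^2}\langle Z,X\rangle X$, verify $\tilde{D}Z_0 = 0$ (the paper organizes the cancellation via $W = D_\xi Z - \frac{\kappa}{f}\langle Z,X\rangle\xi = 0$ and the identity $\tilde{D}_\xi Z_0 = W - \frac{\kappa}{f^2}\langle W,X\rangle X$, which is just a compact form of your term-by-term cancellation), and then invoke Proposition \ref{const vec} and take the inner product with $X$. No gaps.
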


\begin{proof}
The only if part follows by direct computation. For the if part, let $W= D_\xi Z - \frac{\kappa}{f}\langle Z,X \rangle \xi =0.$ We then compute
\begin{align*}
\tilde{D}_\xi \lt( Z- \frac{\kappa}{f^2} \langle Z,X \rangle X \rt) = W - \frac{\kappa}{f^2} \langle W,X \rangle X =0.
\end{align*}
Hence $Z = \frac{\kappa}{f^2} \langle Z,X \rangle X + Z_0$ for some constant vector $Z_0.$ The assertion follows by taking inner product with $X.$
\end{proof}

In terms of the cross product, we express the Riemann curvature tensor on hyperbolic space as 
\begin{align*}
R(X,Y)Z = -\kappa \langle Y,Z \rangle X + \kappa \langle X,Z \rangle Y = \kappa Z \times (Y \times X).
\end{align*}


Let $\bY = f^3 Y,$ where $Y$ is the rotation vector of $\tau$. 
\begin{lem} \label{bY tangential}
$D \bY$ is tangential, that is, $D \bY = C_a^b e_b \omega^a$ for some $2\times 2$ matrix $C_a^b.$ Moreover, $2\kappa \langle \bY,X \rangle = f(C^1_1 + C^2_2)$.
\end{lem}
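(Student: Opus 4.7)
The plan is to exploit the flatness of the twisted covariant derivative $\tilde{D}$ on $T\mathbb{H}^3_{-\kappa}$. Proposition \ref{const vec} furnishes three linearly independent $\tilde{D}$-parallel sections (the constant vectors), which span the tangent space at every point, so $\tilde{D}$ is flat, and its pull-back to $E = \mr^{-1}T\mathbb{H}^3_{-\kappa}$ over $S^2$ is flat as well. I would then apply the covariant exterior derivative $d^{\tilde{D}}$ to the identity $\tilde{D}(\tau/f) = Y\times DX$ from Lemma \ref{rotation vector}. Since $(d^{\tilde{D}})^2 = 0$ on the flat bundle $E$, the right-hand side must be $d^{\tilde{D}}$-closed, yielding the integrability relation
\begin{align*}
\tilde{D}_{e_a}(fY\times e_b) - \tilde{D}_{e_b}(fY\times e_a) - fY\times[e_a, e_b] = 0
\end{align*}
for the orthonormal frame $\{e_1, e_2\}$ tangent to $\Sigma$.

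The next step is to unpack this identity using $D_{e_a}X = fe_a$, $e_a(f) = \kappa\langle X, e_a\rangle$, and the decomposition $D_{e_a}e_b = \nabla_{e_a}e_b + h_{ab}\nu$. Terms involving $\nabla$ cancel against $fY\times[e_a, e_b]$ by torsion-freeness, the $h_{ab}$-terms cancel by symmetry of the second fundamental form, and the remaining cross products can be collected via the BAC-CAB identity $Y\times(\nu\times X) = \langle Y, X\rangle\nu - \langle Y, \nu\rangle X$. What survives should take the compact form
\begin{align*}
f\bigl[(D_{e_1}Y)\times e_2 - (D_{e_2}Y)\times e_1\bigr] = -\kappa\langle Y, X\rangle\nu + 3\kappa\langle Y, \nu\rangle X.
\end{align*}
Decomposing both sides relative to $\{e_1, e_2, \nu\}$ (using $e_1\times e_2 = \nu$, $\nu\times e_1 = e_2$, $\nu\times e_2 = -e_1$) yields two scalar consequences: a tangential relation expressing each $\langle D_{e_a}Y, \nu\rangle$ as a multiple of $\langle Y, \nu\rangle\langle X, e_a\rangle$, and a scalar relation for the trace $\sum_a\langle D_{e_a}Y, e_a\rangle$ in terms of $\langle Y, X\rangle$ and $\langle Y, \nu\rangle\langle X, \nu\rangle$.

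The lemma will then follow by substitution into
\begin{align*}
D_{e_a}\bY = 3\kappa f^2\langle X, e_a\rangle Y + f^3 D_{e_a}Y,
\end{align*}
which comes from $\bY = f^3 Y$ together with $Df = \kappa X$. Plugging the tangential relation into the normal component of the right-hand side produces an exact cancellation, showing $D\bY$ is tangential. Summing the tangential components over $a$ and substituting the trace relation, the cross terms $\pm 3\kappa f^2\langle X, \nu\rangle\langle Y, \nu\rangle$ cancel pairwise, leaving $C_1^1 + C_2^2 = 2\kappa f^2\langle X, Y\rangle$, i.e.\ $f(C_1^1 + C_2^2) = 2\kappa\langle \bY, X\rangle$.

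The main obstacle will be the bookkeeping of cross products in the three-dimensional fibre of $E$ and cleanly splitting vector-valued 1-forms into tangent and normal parts; the cancellations look accidental until one recognizes them as manifestations of the flatness of $\tilde{D}$ coupled with the integrability of the infinitesimal deformation equation.
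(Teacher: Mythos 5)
Your proposal is correct -- I checked both the intermediate identity $f\bigl[(D_{e_1}Y)\times e_2-(D_{e_2}Y)\times e_1\bigr]=-\kappa\langle Y,X\rangle\nu+3\kappa\langle Y,\nu\rangle X$ (it does follow from expanding $\tilde{D}_{e_1}(fY\times e_2)-\tilde{D}_{e_2}(fY\times e_1)-fY\times[e_1,e_2]=0$, using $e_a(f)=\kappa\langle X,e_a\rangle$, torsion-freeness, symmetry of $h$, and $\langle e_1,Y\times e_2\rangle=-\langle Y,\nu\rangle$) and the final bookkeeping: the normal part of $D_{e_a}\bY=3\kappa f^2\langle X,e_a\rangle Y+f^3D_{e_a}Y$ cancels exactly, and the trace gives $C^1_1+C^2_2=2\kappa f^2\langle X,Y\rangle$, i.e.\ $f(C^1_1+C^2_2)=2\kappa\langle\bY,X\rangle$. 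The underlying mechanism is the same as the paper's (an integrability condition obtained by differentiating $\tilde{D}(\tau/f)=Y\times DX$ once more and antisymmetrizing), but your organization differs in two useful ways. The paper first multiplies by $f^3$, then takes an ordinary second derivative with $D$, antisymmetrizes, and invokes the constant-curvature identity $R(\cdot,\cdot)\cdot=\kappa\,\cdot\times(\cdot\times\cdot)$, so it must track by hand the derivatives of $f^3$ and the $\kappa f\langle\tau,e_a\rangle X$ corrections before reaching $D_{e_2}\bY\times fe_1-D_{e_1}\bY\times fe_2=-2\kappa\langle\bY,X\rangle\nu$; you instead invoke flatness of $\tilde{D}$ (Proposition \ref{const vec}, or equivalently Lemma \ref{flat}, which the paper only deploys later in the openness argument), so those cancellations happen automatically, and you insert the factor $f^3$ only at the very end. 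What the paper's route buys is an identity phrased directly in terms of $\bY$ and $DX$, in the same shape as the expressions $(\ref{DX times DbY})$--$(\ref{DX times DX})$ reused in Lemma \ref{Psi}; what your route buys is a conceptually cleaner derivation in which the ``accidental'' cancellations are visibly consequences of the flat twisted connection, at the cost of an extra component decomposition ($N_a$ and the trace relation) before translating back to $\bY$. There is no circularity in using the flatness of $\tilde{D}$ here, since it follows from Proposition \ref{const vec} alone.
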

\begin{proof}
From Lemma \ref{rotation vector}, $\tilde{D}( \frac{\tau}{f} ) = D ( \frac{\tau}{f}) + \frac{\kappa}{f} \langle \frac{\tau}{f},\cdot \rangle X = Y \times D X.$ Choosing an orthonormal frame $e_1, e_2$ such that the tangential component $D_{e_a}^T e_b(p)=0$. Multiplying by $f^3$ and taking derivative, we have at $p$, 
\begin{align*}
f^3 D_{e_b}D_{e_a} \frac{\tau}{f} + e_b(f^3)D_{e_a} \frac{\tau}{f} + D_{e_b} \lt( \kappa f \langle \tau,e_a \rangle X \rt) = \lt( D_{e_b} \bY \rt) \times D_{e_a} X + \bY \times D_{e_b}D_{e_a} X.
\end{align*}
Antisymmetrizing $a,b$ and using the curvature identity, we get
\begin{align*}
&\quad f^3 \lt( -\kappa \langle e_a,\frac{\tau}{f} \rangle e_b + \kappa \langle e_b,\frac{\tau}{f} \rangle  e_a \rt) + e_b(f^3) D_{e_a} (\frac{\tau}{f}) - e_a(f^3) D_{e_b} (\frac{\tau}{f}) \\
&\qquad\qquad+ e_b(\kappa f \langle \tau,e_a \rangle )X - e_a(\kappa f \langle \tau,e_b \rangle )X + \kappa f \big(\langle \tau,e_a \rangle D_{e_b} X - \langle \tau,e_b \rangle D_{e_a} X \big)\\
&= D_{e_b}\bY \times D_{e_a} X - D_{e_a}\bY \times D_{e_b} X + \bY \times \lt( -\kappa \langle e_a,X \rangle e_b + \kappa \langle e_b,X \rangle e_a \rt).
\end{align*}
Note the first and last term of the left hand side cancel. By Lemma \ref{rotation vector}, (\ref{conformal Killing}), and (\ref{grad f}),
\begin{align*}
&\quad e_b(f^3) Y \times D_{e_a} X - e_a(f^3) Y \times D_{e_b} X + \kappa f^2 \lt( \langle Y \times D_{e_b} X,e_a \rangle - \langle Y \times D_{e_a} X,e_b \rangle \rt) X\\
&= D_{e_b} \bY \times D_{e_a} X - D_{e_a} \bY \times D_{e_b} X -\kappa f^3 \lt( \langle e_a,X \rangle Y\times e_b - \langle e_b,X \rangle Y\times e_a \rt).
\end{align*}
Therefore,
\begin{align*}
&\quad  D_{e_b}\bY \times D_{e_a}X - D_{e_a} \bY \times D_{e_b}X\\
&=2\kappa f^3 \lt( \langle X,e_b \rangle Y \times e_a - \langle X,e_a \rangle Y \times e_b \rt) + \kappa f^3 \lt( \langle Y \times e_b,e_a \rangle - \langle Y \times e_a,e_b \rangle \rt)X  \\
&= 2\kappa \bar{Y} \times \left(X\times \left(e_a \times e_b\right)\right)-2\kappa \langle \bar{Y}, (e_a \times e_b) \rangle X.
\end{align*}
Let $a=1,b=2$. We have
\begin{align*}
D_{e_2} \bY \times fe_1 - D_{e_1} \bY \times fe_2 = 2\kappa \bY \times (X \times \nu) - 2\kappa \langle \bY,\nu \rangle X = -2\kappa \langle \bY,X \rangle \nu.
\end{align*}
The lemma follows by comparing the tangential and normal components.
\end{proof}

\begin{lem}\label{Psi}
Consider
\begin{align*}
\Psi= d \lt[ \frac{1}{f}X \cdot \rt( \bY - \frac{\kappa}{f^2} \langle \bY,X \rangle X \lt) \times \lt( D \bY - \frac{\kappa}{f^2} \langle \bY,X \rangle D X \rt)\rt].
\end{align*} 
Here $v \cdot w$ stands for $\langle v,w \rangle$. The two-form $\Psi$ can be expressed as
$$
\frac{X}{f} \cdot \lt( D\bY - \frac{\kappa}{f^2}\langle \bY,X \rangle D X \rt) \times \lt( D\bY - \frac{\kappa}{f^2} \langle \bY,X \rangle D X \rt).
$$
\end{lem}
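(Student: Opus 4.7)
The plan is to apply the Leibniz rule for $d$ on the left-hand side and then use the structural identities of Section~2 together with Lemma~\ref{bY tangential}. Set $\lambda = \kappa\langle \bY, X\rangle/f^2$, $W = \bY - \lambda X$, and $V = D\bY - \lambda DX$, so that the 1-form inside $d$ is $\alpha = \tfrac{1}{f} X \cdot W \times V$. Using the identity $d(X\cdot\eta) = DX\cdot\eta + X\cdot D\eta$ for an $E$-valued 1-form $\eta$, together with the Leibniz rule $D(W\times V) = DW\times V + W\times DV$ for the exterior covariant derivative, one obtains
\begin{equation*}
 d\alpha \;=\; -\frac{df}{f^2}\wedge (X\cdot W\times V) \;+\; \frac{1}{f}\bigl[DX\cdot(W\times V) + X\cdot DW\times V + X\cdot W\times DV\bigr].
\end{equation*}

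The key observation is that $\tfrac{1}{f}X\cdot DW\times V$ already yields the target. By (\ref{conformal Killing}), one has $D_{e_a}W = D_{e_a}\bY - (e_a\lambda)X - \lambda f e_a = V_a - (e_a\lambda)X$, and since $X\cdot X\times V_b = 0$, in a local orthonormal frame $\{e_1, e_2\}$ on $\Sigma$ we get
\begin{equation*}
 \tfrac{1}{f}X\cdot DW\times V(e_1, e_2) \;=\; \tfrac{2}{f}\det(X, V_1, V_2) \;=\; \tfrac{1}{f}X\cdot V\times V(e_1, e_2).
\end{equation*}
It therefore remains to show that the other three terms cancel.

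A short calculation using the Ricci identity $R(U, V)Z = \kappa(\langle U, Z\rangle V - \langle V, Z\rangle U)$ together with (\ref{grad f}) and (\ref{conformal Killing}) yields
\begin{equation*}
 DV(e_a, e_b) \;=\; \tfrac{\kappa}{f}\bigl[\langle V_b, X\rangle e_a - \langle V_a, X\rangle e_b\bigr].
\end{equation*}
Lemma~\ref{bY tangential} then supplies the two crucial facts: $D\bY$ is tangential, which forces each $V_a = D_{e_a}\bY - \lambda f e_a$ to be tangential (so $V_a^\nu = 0$); and the trace identity $C_1^1 + C_2^2 = 2\kappa\langle\bY, X\rangle/f$ gives $V_1^1 + V_2^2 = (C_1^1 + C_2^2) - 2\lambda f = 0$. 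With these facts, the three residuals, expanded in the frame $\{e_1, e_2, \nu\}$ with $P = X\times W$, each collapse onto a scalar multiple of $V_1^1 + V_2^2$ and therefore vanish.

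The main obstacle is the component-level bookkeeping in the last step: after using the tangentiality of $V_a$ to evaluate the triple products $\det(e_a, W, V_b)$ and the inner products $\langle V_a, X\rangle$, one must organise the three residuals and recognise that the total equals $-(V_1^1 + V_2^2)\bigl[W^\nu + \tfrac{\kappa}{f^2}(X^1 P^2 - X^2 P^1)\bigr]$, which is zero by the trace identity of Lemma~\ref{bY tangential}.
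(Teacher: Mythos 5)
Your proof is correct and follows essentially the same route as the paper: expand $d$ by the Leibniz rule, recognize that the term where the derivative falls on the middle factor gives the desired expression $\frac{1}{f}X\cdot V\times V$, and kill the remaining terms using the curvature identity together with the tangentiality and trace statements of Lemma \ref{bY tangential}. Your bookkeeping differs only cosmetically (a closed formula for $DV$ and a frame computation showing each residual is a multiple of the trace of $B$, versus the paper's pairing of the $df$ and $D^2$ terms and the identity (\ref{v times (alpha times beta)})), and both hinge on exactly the same ingredients.
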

\begin{proof}
By straightforward computation, we have
\begin{align*}
f^2 \Psi &= -df X \cdot \bY \times \lt( D \bY - \frac{\kappa}{f^2} \langle \bY,X \rangle D X \rt)\\
&\quad + fD X \cdot \lt( \bY - \frac{\kappa}{f^2} \langle \bY,X \rangle X \rt) \times \lt( D \bY - \frac{\kappa}{f^2} \langle \bY,X \rangle D X \rt)\\
&\quad + fX \cdot \lt( D\bY - \frac{\kappa}{f^2}\langle \bY,X \rangle D X \rt) \times \lt( D\bY - \frac{\kappa}{f^2} \langle \bY,X \rangle D X \rt)\\
&\quad + f X \cdot \bY \times \lt( D^2 \bY -d \lt( \frac{\kappa}{f^2}\langle \bY,X \rangle \rt) \wedge D X - \frac{\kappa}{f^2}\langle \bY,X \rangle D^2 X \rt)\\
&= I + II + III + IV.
\end{align*}
Here 
\begin{align*}
D^2 Y &= \frac{1}{2} \left(  D_{e_a} D_{e_b} \bY - D_{e_b}D_{e_a} \bY - D_{[e_a,e_b]} \bY \right) \omega^a \omega^b \\
&= -\kappa (\bY \times \nu) \omega^1 \wedge \omega^2.
\end{align*}
by the curvature identity.

We claim that $II=0.$ Indeed, we have 
\begin{align}
D X \times D \bY &= f(C^1_1 + C^2_2)\nu \omega^1 \wedge \omega^2 \label{DX times DbY}\\
D X \times D X &= 2 f^2 \nu \omega^1 \wedge \omega^2. \label{DX times DX}
\end{align}
By Lemma \ref{bY tangential}, $II=0$. Consequently, it remains to show that $I + IV =0$

The following identity can be verified directly by expanding each term in an orthonormal frame. 
\begin{align}
v \times (\alpha \times \beta) = -\langle v,\alpha \rangle \beta - \langle v,\beta \rangle \alpha \label{v times (alpha times beta)}
\end{align}
for any $v \in \Gamma(S^2,E)$ and $\alpha,\beta \in \Gamma(S^2, E \otimes T^*S^2)$. 

From (\ref{grad f}), we have
\begin{align*}
df &= \frac{\kappa}{f} \langle X,DX \rangle \\
d \left( \frac{\kappa}{f^2} \langle \bY,X \rangle \right) &= \frac{\kappa}{f^2} \left( -\frac{\kappa}{f^2} \langle X,DX \rangle \langle \bY,X \rangle + \langle D \bY,X \rangle + \langle \bY,DX \rangle \right)
\end{align*}
Hence $I + IV = \frac{\kappa}{f} X \cdot \bY \times \Omega$ where
\begin{align*}
\Omega & = \left( - \langle X,DX \rangle \wedge D\bY + \frac{\kappa}{f^2} \langle \bY,X \rangle \langle X,DX \rangle \wedge DX \right.\\
&\qquad -f^2(\bY \times \nu) \omega^1 \wedge \omega^2 \\
&\qquad + \frac{2\kappa}{f^2} \langle \bY,X \rangle\langle X,DX \rangle \wedge DX - \langle X,D\bY \rangle \wedge DX - \langle \bY,DX \rangle \wedge DX \\
&\qquad + \kappa \langle \bY,X \rangle (X \times \nu) \omega^1 \wedge \omega^2 \Big)
\end{align*}
From (\ref{v times (alpha times beta)}), (\ref{DX times DX}) and (\ref{DX times DbY}), we obtain
\begin{align*}
\Omega &= \Big( 2\kappa \langle \bY,X \rangle (X\times \nu) - \kappa \langle \bY,X \rangle (X\times\nu) - f^2 (\bY\times\nu) \\
&\qquad - 2\kappa \langle \bY,X \rangle (X\times\nu) + f^2(\bY\times\nu) + \kappa \langle \bY,X \rangle (X\times\nu) \Big) \omega^1 \wedge \omega^2 \\
&=0 
\end{align*}
\end{proof}

We are ready to prove the rigidity result, Theorem \ref{main}.

\begin{proof}[Proof of Theorem 7]
From Lemma \ref{bY tangential}, we can write 
\begin{equation}
D \bY - \frac{\kappa}{f^2} \langle \bY,X \rangle D X = B^a_b e_a \omega^b \label{B}
\end{equation}
with $B^1_1 + B^2_2 =0.$ Choosing an orthonormal frame $e_1, e_2$ such that the tangential component $D_{e_a}^T e_b(p)=0$. At $p,$ we have
\begin{align*}
D_{e_a} \lt( D_{e_b} \bY - \frac{\kappa}{f^2} \langle \bY,X \rangle D_{e_b}X \rt) = e_a(B^c_b)e_c - B^c_b h_{ac} \nu,
\end{align*}
where $h_{ab}$ is the second fundamental form of $\Sigma.$
Antisymmetrizing and using the curvature identity, we obtain
\begin{eqnarray*}
&-\kappa \bY \times (e_a \times e_b) + \alpha e_b - \beta e_a + \frac{\kappa^2}{f^2} \langle \bY,X \rangle X \times (e_a \times e_b) \\
&= \lt( e_a(B^c_b) - e_b(B^c_a) \rt)e_c + \lt( B^c_a h_{bc} - B^c_b h_{ac} \rt) \nu.
\end{eqnarray*}
Here $\alpha= -fD_{e_a}\lt(  \frac{\kappa}{f^2} \langle \bY,X \rangle\rt)$ and $\beta = -fD_{e_b}\lt(  \frac{\kappa}{f^2} \langle \bY,X \rangle\rt)$.
Let $a=1,b=2.$ Comparing the normal component, we have $B^c_1 h_{2c} - B^c_2 h_{1c}=0.$ In matrix form,
\begin{align} \label{negative definite}
Tr \begin{pmatrix}
B^1_1 & B^2_1 \\
B^1_2 & B^2_2 \\
\end{pmatrix}
\begin{pmatrix}
h_{21} & -h_{11} \\
h_{22} & -h_{12} \\
\end{pmatrix}=0.
\end{align}
By the Gauss equation, the determinant of the second matrix is $K + \kappa$. Since $(Tr(M))^2 \geq 2 \det(M)$ for any $2 \times 2$ matrix, $\det(B) \leq 0.$ On the other hand, Lemma \ref{Psi} and Stoke's theorem imply
\begin{align*}
0 &= \int_{\Sigma} \Psi \\
 &= \int_{\Sigma}  d \lt[ \frac{1}{f}X \cdot \rt( \bY - \frac{\kappa}{f^2} \langle \bY,X \rangle X \lt) \times \lt( D \bY - \frac{\kappa}{f^2} \langle \bY,X \rangle D X \rt)\rt] \\
   &= \int_{\Sigma} \frac{X}{f}\cdot \left( B_b^a e_a \omega^b \right) \times \left( B_d^c e_c \omega^d \right) \\
  &= \int_{\Sigma} 2\frac{\langle X,\nu \rangle}{f} \det B \omega^1\wedge \omega^2.
\end{align*}
Hence, $\det(B) \equiv 0.$ We may assume $h_{11}h_{22} > 0, h_{12}=h_{21}=0.$ (\ref{negative definite}) then implies $B^1_2 B^2_1 \geq 0.$ Since $B^1_1 + B^2_2=0, B \equiv 0.$

By Lemma \ref{characterization}, $\bY = Z_0 + \kappa \langle Z_0,X \rangle X$ for some constant vector $Z_0.$ From the definition of $\bY,$ $\tilde{D} \lt( \frac{\tau}{f} \rt) = \frac{1}{f^3} \lt( Z_0 + \kappa \langle Z_0,X \rangle X \rt) \times D X.$ By Lemma \ref{rotation vector of a Killing vector}, $\tilde{D} \lt( \frac{\tau - Z_0 \times X}{f} \rt)=0.$ Hence $\tau = Z_0 \times X + f W_0$ for some constant vector $W_0.$ This completes the proof of Theorem \ref{main}.
\end{proof}

\section{Isometric embeddings into $\mathbb{H}^3_{-\kappa}$}

The main goal of this section is to prove
\begin{thm}[Theorem A]\label{isom}
Let $\bar{\sigma}$ be a smooth metric on $S^2$ with Gauss curvature $K \geq-\kappa$. Then there exists a $C^{1,1}$ isometric embedding into $\mathbb{H}^3_{-\kappa}$. 
\end{thm}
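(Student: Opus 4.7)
The plan is to run a continuity method along a Ricci flow path, combining the three ingredients already prepared in the paper: the infinitesimal rigidity of convex surfaces (Theorem \ref{main}), the Killing and conformal Killing structure on $\mathbb{H}^3_{-\kappa}$ from Section 2, and the a priori mean curvature bound of Theorem B. First I would apply the normalized Ricci flow on $(S^2,\bar\sigma)$ to obtain a smooth one-parameter family $\sigma_t$, $t \in [0,\infty)$, with $\sigma_0 = \bar\sigma$, with Gauss curvature $K(\sigma_t) > -\kappa$ for all $t > 0$, and with $\sigma_t \to \sigma_\infty$ smoothly to a constant-curvature metric. Setting
\[
I = \{\, t \in [0,\infty) : (S^2,\sigma_t) \text{ admits a closed convex } C^{1,1} \text{ isometric embedding into } \mathbb{H}^3_{-\kappa}\,\},
\]
the task reduces to showing that $I$ is nonempty, open, and closed in $[0,\infty)$, so that $I = [0,\infty)$ and in particular $0 \in I$.

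For openness at $t_0 > 0$, strict convexity of the existing embedding (a consequence of $K(\sigma_{t_0}) + \kappa > 0$ via the Gauss equation) allows me to linearize the nonlinear system $g_{ij}(\mr)\,\pl_a \mr^i\,\pl_b \mr^j = (\sigma_t)_{ab}$ at the given solution. Theorem \ref{main} identifies the kernel of the linearization with the six-dimensional Lie algebra of isometries of $\mathbb{H}^3_{-\kappa}$, so all infinitesimal deformations are trivial. Following Nirenberg, I would reduce the linearized symmetric $(0,2)$-tensor equation to a single scalar second-order elliptic PDE on $(S^2,\sigma_{t_0})$, substituting the conformal Killing field $X = rf\,\frac{\pl}{\pl r}$ for Nirenberg's Euclidean position vector and exploiting the identities \eqref{grad f} and \eqref{conformal Killing}. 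Hilbert/Fredholm theory then solves the scalar equation modulo a finite-dimensional cokernel, and after gauge-fixing away the six-dimensional Killing deformations a contraction mapping argument yields isometric embeddings for all $\sigma_t$ with $t$ in a neighborhood of $t_0$.

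Nonemptiness is then a byproduct: $\sigma_\infty$ embeds as a round sphere in $\mathbb{H}^3_{-\kappa}$, and openness shows $[T_0,\infty) \subset I$ for some large $T_0$. For closedness I would take $t_n \in I$ with $t_n \to t_\infty$ and corresponding convex embeddings $\mr_n$, translate them so that each $\mr_n(S^2)$ is centered at the origin as required by Theorem B, then observe that uniform control of the diameter and area coming from $\sigma_{t_n}$ bounds $\|f\|_{C^0(\Sigma_n)}$ uniformly, while uniform $C^2$ control of $K(\sigma_{t_n})$ along the Ricci flow bounds $\|K\|_{C^2(\Sigma_n)}$. Theorem B then furnishes a uniform bound $\max_{\Sigma_n} H \leq C$, which combined with convexity yields a uniform $C^{1,1}$ bound on $\mr_n$. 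Arzel\`a--Ascoli extracts a subsequence converging in $C^{1,\alpha}$ to a limit $\mr_\infty$ that one verifies to be a closed convex $C^{1,1}$ isometric embedding of $\sigma_{t_\infty}$.

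The main obstacle is the closedness step as $t_n \to 0$, precisely because $K(\sigma_{t_n})$ is allowed to approach the critical value $-\kappa$ somewhere on the surface in the limit. In that regime the extrinsic Gauss curvature $K+\kappa$ of $\Sigma_n$ can degenerate and the classical Weyl-style maximum principle applied to $H$ breaks down; the test-function construction underlying Theorem B is precisely what is designed to absorb this degeneracy, and without it the continuity method cannot be closed. A secondary technical point is verifying that the Nirenberg-style scalar reduction in the openness step remains uniformly elliptic in the hyperbolic setting, for which the conformal Killing identity $D_\xi X = f\xi$ and the gradient identity $Df = \kappa X$ from Section 2 are the crucial inputs.
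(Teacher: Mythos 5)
Your proposal is correct and follows essentially the same route as the paper: the Ricci-flow path with $K>-\kappa$ for $t>0$, openness via the infinitesimal rigidity of Theorem \ref{main} plus the Nirenberg-style scalar reduction (using $Df=\kappa X$ and $D_\xi X=f\xi$) and a contraction mapping, nonemptiness from the round-sphere embedding of $\sigma_\infty$, and closedness from Struwe's convergence estimate, the mean curvature bound of Theorem B, and Arzel\`a--Ascoli. The only cosmetic difference is that the paper handles the non-uniqueness in the linearized problem by verifying orthogonality of the data to the (three-dimensional) kernel of the reduced scalar elliptic equation rather than by gauge-fixing the six Killing deformations, but this does not change the substance of the argument.
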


We prove the theorem by the continuity method. It consists of three steps:
\begin{itemize}
\item[(1)] Connectedness: show that there exists a family of smooth metrics $\sigma_t, t\in [0,\infty)$ such that $\sigma_0 = \bar{\sigma}$ and that $\sigma_t$ converges to a metric with constant Gauss curvature. Moreover, $\sigma_t$  has $K > -\kappa$ for $t>0$.

\end{itemize}
Let $I \subset [0,\infty)$ be the set of parameters that $\sigma_t $ can be isometrically embedded into  $\mathbb{H}^3_{-\kappa}$ as a closed convex $C^{1,1}$ surface.

\begin{itemize}
\item[(2)] Openness: show that $I \cap (0,\infty)$ is open.
\end{itemize}
\noindent Since any metric with constant Gauss curvature is the same as the standard metric up to a diffeomorphism, $\sigma_\infty$ can be isometrically embedded into $\mathbb{H}^3_{-\kappa}.$ As a result of the openness, there exists a sufficiently large $T_0$ such that $\sigma_t, t>T_0,$ can be isometrically embedded into $\mathbb{H}^3_{-\kappa}$. In particular, $I$ is nonempty.
\begin{itemize}
\item[(3)] Closedness: We prove an a priori estimate to obtain closedness. 
\end{itemize}

\subsection{Connectedness}

We prove the connectedness by using solutions to the normalized Ricci flow. 

\begin{lem}\label{Ricci flow on surfaces}
There exists an one-parameter family of smooth metrics $\sigma_t, t \in [0,\infty)$ on $S^2$ such  that $\sigma_0 = \bar{\sigma}$ and $\sigma_t$ converges to a metric $\sigma_\infty$ with constant Gauss curvature. Moreover, $K > -\kappa$ for $t >0.$  
\end{lem}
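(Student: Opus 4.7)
The plan is to let $\sigma_t$ be the solution of the normalized Ricci flow on $S^2$ with initial data $\bar\sigma$, invoke the Hamilton--Chow convergence theorem for long-time existence and convergence to a constant-curvature metric, and then apply a parabolic maximum principle to upgrade $K \geq -\kappa$ at $t=0$ to the strict inequality $K > -\kappa$ for $t > 0$.

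More concretely, I would set up the normalized Ricci flow
\begin{align*}
\frac{\partial \sigma_t}{\partial t} = (r - R)\, \sigma_t, \qquad \sigma_0 = \bar\sigma,
\end{align*}
where $R = 2K$ is the scalar curvature of $\sigma_t$ and $r$ is its spatial average. This normalization preserves the total area, so by Gauss--Bonnet $r = 8\pi/\mathrm{Area}(\bar\sigma) > 0$ is a positive constant independent of $t$. Hamilton \cite{Hamilton88} proved long-time existence and smooth convergence to a round metric when $K > 0$ initially, and Chow \cite{Chow91} removed the positivity assumption. This gives the family $\sigma_t$ on $[0,\infty)$ and its limit $\sigma_\infty$ of constant Gauss curvature.

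For the strict lower bound on $K$, I would use the standard evolution equation
\begin{align*}
\frac{\partial K}{\partial t} = \Delta_{\sigma_t} K + 2K(K - \bar K),
\end{align*}
where $\bar K = r/2 > 0$. Setting $u = K + \kappa$ and expanding yields
\begin{align*}
\frac{\partial u}{\partial t} = \Delta_{\sigma_t} u + 2u(u - 2\kappa - \bar K) + 2\kappa(\kappa + \bar K).
\end{align*}
Suppose, toward a contradiction, that $t_0 > 0$ is the first time at which $u$ attains the value $0$, at some point $p$. Then $\Delta u(p,t_0) \geq 0$ because $(p,t_0)$ is a spatial minimum with value $0$, and $\partial_t u(p,t_0) \leq 0$ because $u(p,t) \geq u_{\min}(t) > 0$ for $t < t_0$ close to $t_0$ while $u(p,t_0) = 0$. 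On the other hand, substituting $u(p,t_0) = 0$ and $\Delta u(p,t_0) \geq 0$ into the evolution equation forces $\partial_t u(p,t_0) \geq 2\kappa(\kappa + \bar K) > 0$, a contradiction. Hence $K > -\kappa$ for all $t > 0$.

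The only substantial input is the long-time existence and convergence result cited from \cite{Hamilton88, Chow91}; the maximum principle step is routine. The one point to be careful about is that the forcing term $2\kappa(\kappa + \bar K)$ is strictly positive, which uses $\kappa > 0$ (hyperbolic sectional curvature) together with $\bar K > 0$ (Gauss--Bonnet on $S^2$). This positivity is what makes the argument robust even when the touching set $\{K = -\kappa\}$ has positive measure at $t = 0$, and is the only place where the hyperbolic (rather than Euclidean) setting enters the Ricci-flow step.
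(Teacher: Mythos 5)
Your overall route is the same as the paper's: run the normalized Ricci flow from $\bar\sigma$, quote Hamilton--Chow for long-time existence and convergence, and use the reaction term in the curvature evolution to turn $K\ge-\kappa$ at $t=0$ into $K>-\kappa$ for $t>0$. (The paper phrases the last step for $R=2K$ via the observation that $\min R$ is increasing whenever $\min R<0$, using $r>0$; your positive forcing term $2\kappa(\kappa+\bar K)$ for $u=K+\kappa$ plays the same role, and your computation of the evolution of $u$ is correct.)

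The one genuine weak point is the first-touching-time contradiction. The negation of ``$u>0$ for all $t>0$'' is not that there is a \emph{first positive} time at which $u$ reaches $0$ with $u>0$ beforehand: since the hypothesis allows $u=K+\kappa$ to vanish already at $t=0$ (this borderline case is the entire point of the lemma), the infimum of the bad set $\{t>0:\min_{S^2} u(\cdot,t)\le 0\}$ could a priori be $0$, and then there is no earlier time interval on which $u_{\min}>0$, so your argument that $\partial_t u(p,t_0)\le 0$ has nothing to bite on. Nor does the pointwise remark that $\partial_t u(p,0)\ge 2\kappa(\kappa+\bar K)>0$ at each zero $p$ of $u(\cdot,0)$ suffice by itself, because the points where $u$ might be nonpositive at times $t_k\downarrow 0$ can vary with $k$; one needs uniformity in space. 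The standard repair is to run the maximum principle in the form the paper uses: $u_{\min}(t)=\min_{S^2}u(\cdot,t)$ is Lipschitz in $t$ and satisfies, in the a.e.\ (or lim-inf of difference quotients) sense, $\frac{d}{dt}u_{\min}\ \ge\ 2u_{\min}(u_{\min}-2\kappa-\bar K)+2\kappa(\kappa+\bar K)$, where $\bar K=r/2>0$ is constant in $t$ because the normalized flow preserves area. Since the right-hand side is bounded below by a positive constant whenever $|u_{\min}|$ is small, ODE comparison gives $u_{\min}(t)>0$ for every $t>0$, with no case analysis on where or when the first touching occurs. With this replacement your proof coincides with the paper's.
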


\begin{proof}
For any given smooth metric $\bar{\sigma}$ on $S^2$ with $K \geq -\kappa$, consider the normalized Ricci flow with the initial metric $\bar{\sigma}:$
\begin{align}\label{normalized Ricci flow}
\left\{ \begin{array}{rl}
\frac{\pl \sigma}{\pl t} &= (r- R)\sigma\\
\sigma_0 &= \bar{\sigma},
\end{array} \right.
\end{align}
where $R$ is the scalar curvature of $\sigma_t$ with average $r.$  Hamilton \cite{Hamilton88} and Chow \cite{Chow91} established long time existence and convergence to a metric with constant Gauss curvature. Moreover, the scalar curvature satisfies the evolution equation
\begin{align*}
\frac{\partial R}{\partial t} = \Delta_{\sigma}R + R(R-r).
\end{align*}
Applying the maximum principle to the evolution equation, we know that $\min R$ is increasing when $\min R <0$. Thus, $K(\sigma_t)+ \kappa$ is positive for $t>0$.  
\end{proof}

\subsection{Openness}
We show the openness in the continuity method by proving the following result. 
\begin{thm}\label{openness}
Let $\sigma$ be a smooth metric on $S^2$ with Gauss curvature  $K > -\kappa$. Suppose $\sigma$ can be isometrically embedded into $\mathbb{H}^3_{-\kappa}$ as a closed convex surface $\mr$. Then for any $\alpha \in (0,1),$ there exists a positive $\epsilon,$ depending only on $\sigma$ and $\alpha,$ such that any smooth metric $\sigma'$ on $S^2$ satisfying
\begin{align*}
|\sigma - \sigma'|_{C^{2,\alpha}} < \epsilon
\end{align*}
can be isometrically embedded in $\mathbb{H}^3_{-\kappa}$ as a closed convex survace $\mr'$.
\end{thm}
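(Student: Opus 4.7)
The plan is to set up the contraction mapping framework standard for Nirenberg-type arguments. Writing the perturbed embedding as $\mr' = \mr + \tau$, the isometric condition $D\mr' \odot D\mr' = \sigma'$ becomes
\[
2\, D\mr \odot D\tau = (\sigma' - \sigma) - D\tau \odot D\tau,
\]
a nonlinear equation for $\tau \in \Gamma(S^2, E)$. If the linearization $L: \tau \mapsto 2\, D\mr \odot D\tau$ can be inverted as a bounded map $L^{-1}: C^{2,\alpha}(S^2; \mathrm{Sym}^2 T^*S^2) \to C^{2,\alpha}(S^2; E)$ modulo its kernel, then $T(\tau) := L^{-1}\bigl((\sigma'-\sigma) - D\tau \odot D\tau\bigr)$ is a contraction on a small $C^{2,\alpha}$-ball whenever $\|\sigma'-\sigma\|_{C^{2,\alpha}} < \epsilon$, and its fixed point furnishes the embedding $\mr'$.

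The core step is inverting $L$. I would adapt Nirenberg's scalar reduction to $\mathbb{H}^3_{-\kappa}$, reducing the vector-valued linearized equation to a single second-order scalar PDE for the hyperbolic support-type function $u = \langle X, \nu \rangle / f$ (or, equivalently, for the rotation vector $\bar{Y}$ introduced in Section \ref{rigid}). Revisiting the identities of Section \ref{rigid} with an inhomogeneous right-hand side coming from $h = \sigma'-\sigma$ instead of $0$, the symmetric part of $\tilde{D}(\tau/f)$ becomes prescribed by $h$ while the antisymmetric part encodes the rotation vector, and the resulting compatibility condition should collapse to a scalar equation in $u$. Strict convexity $K > -\kappa$ together with the Gauss equation gives $\det(h_{ab}) = K + \kappa > 0$, so the principal symbol is a positive multiple of the cofactor of $h_{ab}$ and the operator is uniformly elliptic on $S^2$. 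Classical Hilbert--Schauder theory then yields a $C^{2,\alpha}$ solution with the estimate $\|\tau\|_{C^{2,\alpha}} \le C\, \|h\|_{C^{2,\alpha}}$ modulo the kernel.

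The kernel of $L$ is exactly the 6-dimensional space of restrictions of Killing vector fields of $\mathbb{H}^3_{-\kappa}$ to $\Sigma$, by the infinitesimal rigidity of Theorem \ref{main}. Fredholm theory produces a matching 6-dimensional cokernel, which I would dispose of by exploiting the freedom to postcompose $\mr'$ with an isometry of $\mathbb{H}^3_{-\kappa}$: this 6-parameter gauge action absorbs the six obstructions and makes $T$ well defined on the orthogonal complement of the Killing space. The main obstacle will be the scalar reduction itself. Unlike Nirenberg's Euclidean case, where the position vector commutes with the flat connection, the hyperbolic setting forces use of the twisted derivative $\tilde{D}$ and the static potential $f$, so the algebraic manipulations that produce the scalar elliptic equation, together with the verification that its principal part has the claimed form, are substantially more delicate. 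Once those identities are in place, openness follows routinely from the contraction argument, and convexity of the perturbed surface is preserved for $\epsilon$ small by continuity of the second fundamental form.
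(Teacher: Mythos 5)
Your overall framework (perturb the embedding, solve the linearized equation, run a contraction in $C^{2,\alpha}$) is the same as the paper's, and your scalar reduction via the twisted derivative and the ellipticity coming from $\det(h_{ab})=K+\kappa>0$ matches Proposition \ref{linear prop}. But there is a genuine gap at the solvability step. You propose to handle the six-dimensional cokernel "by exploiting the freedom to postcompose $\mr'$ with an isometry of $\mathbb{H}^3_{-\kappa}$." That confuses kernel with cokernel: the isometry group acts on solutions, so its linearization parametrizes the kernel of $L$ (non-uniqueness), but it does not act on the right-hand side of the linear problem and cannot make an unsolvable equation solvable. For the reduced self-adjoint scalar equation (\ref{elliptic}), solvability requires that the specific right-hand side $T+\na_a\bigl((h^{-1})^{ab}c_b\bigr)$ built from an \emph{arbitrary} symmetric form $\bar q$ be $L^2$-orthogonal to the kernel functions $\bar w=\langle Y_0,\nu\rangle+\kappa\langle Y_0,X\rangle\langle X,\nu\rangle$. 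This is not a gauge statement; it is an integral identity that must be verified, and the paper does so by a concrete computation using $\na_a\bar w=h_{ab}\bar w_b$ and $\na_a\bar w_b=-h_{ab}\bar w+\kappa f\langle Y_0,X\rangle\delta_{ab}$, which makes $\int T\bar w-(h^{-1})^{ab}c_b\na_a\bar w$ vanish term by term. Without this (or an equivalent argument), your linear solvability claim, and hence the fixed-point map, is unjustified.

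A second, smaller issue is the estimate you invoke for the contraction. You assert $\|\tau\|_{C^{2,\alpha}}\le C\|h\|_{C^{2,\alpha}}$ for the solution of the linearized problem, but in the iteration the right-hand side contains $D\tau\odot D\tau$ (and, since the ambient metric coefficients depend on position, additional quadratic terms in $\tau$ that your schematic equation omits), so a bound requiring the full $C^{2,\alpha}$ norm of the right-hand side would demand $C^{3,\alpha}$ control of the iterate and the scheme would lose a derivative. The paper avoids this through the sharper estimate of Lemma \ref{key estimate}: $|\my|_{2,\alpha}$ is controlled by $|q|_{1,\alpha}$ plus the particular curl-type combination $\na_1(f^2c_2)-\na_2(f^2c_1)$, which by a cancellation contains no third derivatives of the unknown. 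Some version of this structural observation is needed for your contraction to close in $C^{2,\alpha}$.
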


Recall that we write $\mr$ for $\varphi \circ i$ and view $\mr$ as the position vector, where $\varphi$ is a trivialization. A deformation $\tau \in \Gamma(\Sigma,E)$ is viewed as a vector-valued function $\my \in T_\mr \mathbb{R}^3$ and a translation sending $\mr$ to $\mr + \my.$ To prove Theorem \ref{openness}, it suffices to find a vector $\my$ satisfying
\begin{align}
g_{ij}({\mr+\my}) \frac{\pl(\mr^i+\my^i)}{\pl u^a} \frac{\pl(\mr^j+\my^j)}{\pl u^b} = \sigma'_{ab}. \label{isometric embedding equation for nearby metric}
\end{align}
Below, we first find an equivalent infinitesimal-deformation equation (\ref{linear}) and solve the linearized equation (\ref{linearized eq}) of (\ref{linear}). 

Substracting $g_{ij} (\mr) \frac{\pl \mr^i}{\pl u^a} \frac{\pl \mr^j}{\pl u^b} = \sigma_{ab}$ from (\ref{isometric embedding equation for nearby metric}), we get
\begin{align*}
g_{ij}(\mr) \left( \mr_a^i D_b \my^j + D_a \my^i \mr_b^j \right) - g_{ij}(\mr)\left( \mr_a^i \Gamma_{bk}^j \my^k +  \Gamma_{ak}^i \my^k \mr_b ^j \right)+ g_{ij}(\mr) \my_a^i \my_b^j \\
+ \left( g_{ij}(\mr+\my) - g_{ij}(\mr) \right) \left( \mr_a^i \mr_b^j + \mr_a^i \my_b^j + \my_a^i \mr_b^j + \my_a^i \my_b^j \right) = \sigma'_{ab} - \sigma_{ab},
\end{align*}
where $D_b = D_{\frac{\pl}{\pl u^b}}$, $\mr^i_a = \frac{\pl \mr^i}{\pl u^a}$ and $\Gamma_{ak}^i = \mr_a^m \Gamma_{mk}^i.$
By Taylor theorem,
\begin{align*}
g_{ij}(\mr+\my) - g_{ij}(\mr) = \pl_k g_{ij}(\mr) \my^k + \my^k\my^l \int_0^1 (1-t) \pl^2_{kl} g_{ij}(\mr + t\my) dt.
\end{align*}
By the definition of Christoffel symbol, $\pl_k g_{ij}(\mr) \mr_a^i \mr_b^j = g_{ij}(\mr) \left(  \mr_a^i \Gamma_{bk}^j + \Gamma_{ak}^i \mr_b^j \right).$ 
Set 
$$
F_{ijkl} (\mr,\my) := \int_0^1 (1-t) \pl^2_{kl} g_{ij}(\mr + t\my) dt
\quad \mbox{and} \quad
G_{ijk} (\mr,\my) := \int _0^1 \pl_k g_{ij}(\mr + t\my) dt.
$$ 
 We conclude that (\ref{isometric embedding equation for nearby metric}) is equivalent to the following inhomogeneous infinitesimal-deformation-type equation
\begin{align}\label{linear}
&g_{ij}(\mr) \left( \mr_a^i D_b \my^j + D_a \my^i \mr_b^j \right) \\ 
&=  \sigma'_{ab} -\sigma_{ab} - g_{ij}(\mr) \my_a^i \my_b^j - F_{ijkl}(\mr,\my)\mr_a^i\mr_b^j \my^k \my^l - G_{ijk}(\mr,\my) \my^k (\mr_a^i \my_b^j + \my_a^i \mr_b^j + \my_a^i \my_b^j)\nonumber \\
& =: q_{ab}(\my) \notag
\end{align}
Note that $|F_{ijkl}(\mr,\my)|_{m,\alpha}, |G_{ijk}(\mr,\my)|_{m,\alpha} \leq C_{m,\alpha} |\my|_{m,\alpha}.$ 

To solve (\ref{linear}), we study the corresponding linearized equation
\begin{align}\label{linearized eq}
g_{ij}(\mr) \left( \mr_a^i D_b \my^j + D_a \my^i \mr_b^j \right) = \bar{q}_{ab}
\end{align}
where $\bar{q}_{ab}$ is an arbitrary smooth symmetric bilinear form on $S^2$.

Before solving the linearized equation (\ref{linearized eq}), we show that $\tilde{D}$ is a flat connection on $E.$
\begin{lem} \label{flat}
For any $Y\in \Gamma(\Sigma, E),$
\begin{align*}
\tilde{D}_{e_b} \tilde{D}_{e_a} Y - \tilde{D}_{e_a} \tilde{D}_{e_b} Y - \tilde{D}_{[e_b,e_a]} Y =0.
\end{align*}
\end{lem}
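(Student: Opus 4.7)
The plan is to verify the identity by a direct expansion, leveraging the two structural identities from Section~2: the gradient formula $Df = \kappa X$ and the conformal Killing equation $D_\xi X = f\xi$. The hyperbolic curvature identity $R(V,W)Z = -\kappa\langle W,Z\rangle V + \kappa\langle V,Z\rangle W$ will produce the only ``unwanted'' contribution, and the definition of $\tilde D$ has been calibrated precisely so that this contribution cancels.

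Concretely, I would expand $\tilde{D}_{e_b}\tilde{D}_{e_a}Y$ by substituting $\tilde{D}_{e_a}Y = D_{e_a}Y + \tfrac{\kappa}{f}\langle e_a, Y\rangle X$ and then applying $\tilde{D}_{e_b}$ to each piece. The derivative of $1/f$ generates a factor $-\tfrac{\kappa}{f^2}\langle X, e_b\rangle$ (via $Df = \kappa X$), producing a term $-\tfrac{\kappa^2}{f^2}\langle X,e_b\rangle\langle e_a,Y\rangle X$. The outer twist term $\tfrac{\kappa}{f}\langle e_b,\tfrac{\kappa}{f}\langle e_a,Y\rangle X\rangle X$ produces $+\tfrac{\kappa^2}{f^2}\langle e_b,X\rangle\langle e_a,Y\rangle X$, which cancels it exactly. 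Using $D_{e_b}X = f\,e_b$, the remaining new term is $\kappa\langle e_a,Y\rangle e_b$. After antisymmetrizing in $a,b$, the curvature $R(e_b,e_a)Y = -\kappa\langle e_a,Y\rangle e_b + \kappa\langle e_b,Y\rangle e_a$ arising from $D_{e_b}D_{e_a}Y - D_{e_a}D_{e_b}Y - D_{[e_b,e_a]}Y$ is cancelled by precisely the terms $\kappa(\langle e_a,Y\rangle e_b - \langle e_b,Y\rangle e_a)$ just computed. What is left is $D_{[e_b,e_a]}Y + \tfrac{\kappa}{f}\langle[e_b,e_a],Y\rangle X$, which by torsion-freeness $D_{e_b}e_a - D_{e_a}e_b = [e_b,e_a]$ is exactly $\tilde{D}_{[e_b,e_a]}Y$.

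A cleaner, more conceptual alternative (which I would likely present as a remark) is to observe that Proposition~\ref{const vec} already gives a global $\tilde D$-parallel frame: the coordinate vector fields $\{\partial/\partial x^i\}_{i=1}^{3}$ in static coordinates are constant in the sense characterized there, hence satisfy $\tilde D\partial/\partial x^i = 0$ on $\mathbb{H}^3_{-\kappa}$; pulling back via $\mr$ gives a global trivialization of $E$ by $\tilde D$-parallel sections, so the curvature of $\tilde D$ vanishes on every $Y \in \Gamma(\Sigma, E)$.

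The main (very mild) obstacle is purely bookkeeping: keeping track of which derivative falls on $f$, on $\langle e_a,Y\rangle$, on $X$, or on $Y$, so that the two cancellations (the $\tfrac{\kappa^2}{f^2}$-type terms against each other, and the hyperbolic curvature against the $\kappa\langle e_\cdot,Y\rangle e_\cdot$ terms) appear in a transparent way. No additional geometric input is required beyond equations \eqref{grad f} and \eqref{conformal Killing}.
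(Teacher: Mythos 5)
Your main argument is correct and essentially identical to the paper's proof: the same direct expansion of $\tilde{D}_{e_b}\tilde{D}_{e_a}Y$ using $Df=\kappa X$ and $D_\xi X=f\xi$, with the $\tfrac{\kappa^2}{f^2}$-terms cancelling pairwise and the leftover $\kappa\langle e_a,Y\rangle e_b-\kappa\langle e_b,Y\rangle e_a$ cancelling the hyperbolic curvature $R(e_b,e_a)Y$. Your remarked alternative—observing that Proposition~\ref{const vec} furnishes a global $\tilde{D}$-parallel frame $\{\partial/\partial x^i\}$ whose pull-back trivializes $E$, forcing the curvature of $\tilde{D}$ to vanish—is also valid and is a cleaner conceptual route that the paper does not take.
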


\begin{proof} 
\begin{align*} 
\tilde{D}_{e_b} \tilde{D}_{e_a} Y &= \tilde{D}_{e_b} \left( D_{e_a} Y + \frac{\kappa}{f} \langle e_a,Y \rangle X \right) \\
&= D_{e_b} D_{e_a} Y  + D_{e_b} \left( \frac{\kappa}{f} \langle e_a,Y \rangle X \right) + \frac{\kappa}{f} \langle e_b,D_{e_a} Y \rangle X+ \frac{\kappa^2}{f^2} \langle e_a,Y \rangle \langle e_b,X \rangle X\\
&=  D_{e_b} D_{e_a} Y  
 -\frac{\kappa^2}{f^2} \langle e_b,X \rangle\langle e_a,Y \rangle X + \frac{\kappa}{f}\langle D_{e_b} e_a,Y \rangle X + \frac{\kappa}{f} \langle e_a,D_{e_b} Y
\rangle X + \kappa \langle e_a,Y \rangle e_b\\
&+ \frac{\kappa}{f} \langle e_b,D_{e_a} Y \rangle X+ \frac{\kappa^2}{f^2} \langle e_a,Y \rangle \langle e_b,X \rangle X.
\end{align*}
Therefore,
\begin{align*}
\tilde{D}_{e_b} \tilde{D}_{e_a} Y - \tilde{D}_{e_a}\tilde{D}_{e_b} Y - \tilde{D}_{[e_b,e_a]} Y =R(e_b,e_a)Y + \kappa \langle e_a,Y \rangle e_b - \kappa \langle e_b,Y \rangle e_a =0.
\end{align*}
\end{proof}

Next, we solve the linearized equation (\ref{linearized eq}). 

\begin{prop}\label{linear prop}
For any smooth symmetric bilinear form $\bar{q}$ on a convex surface $\Sigma \subset \mathbb{H}^3_{-\kappa}$, there exists a smooth solution to 
\begin{align}
D \tau \odot DX = f^2 \bar{q} \label{inhomo inf def}.
\end{align}
\end{prop}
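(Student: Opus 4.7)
The plan is to follow Nirenberg's reduction of the vector system to a single scalar elliptic equation, mirroring the rigidity argument of Section \ref{rigid} in reverse. First, I would work in a local orthonormal frame $\{e_1, e_2, \nu\}$ on $\Sigma$ and decompose $D_{e_a}\tau = A_{ab}e_b + B_a \nu$. Since $D_{e_a}X = f e_a$ by \eqref{conformal Killing}, the equation $D\tau \odot DX = f^2 \bar q$ is equivalent to the algebraic condition $A_{ab} + A_{ba} = 2f\bar q_{ab}$: the symmetric part of $A$ is pinned down by $\bar q$, while the antisymmetric part (a scalar $t$ via $A_{ab} - A_{ba} = 2t\epsilon_{ab}$) and the normal components $B_1, B_2$ remain as three free functions to be determined.

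Next, I would introduce a rotation vector $\bar Y = f^3 Y$ through an ansatz of the form $\tilde D(\tau/f) = Y \times DX + \Theta(\bar q)$, where $\Theta(\bar q)$ is an explicit $E$-valued one-form built pointwise from $\bar q$ that absorbs the prescribed symmetric part of $A$. Because $\tilde D$ is flat (Lemma \ref{flat}) and $S^2$ is simply connected, constructing $\tau$ is equivalent to constructing a globally defined $\bar Y$. Antisymmetrizing the mixed covariant derivatives $D_{e_a}D_{e_b}\tau$ and applying the curvature identity, as in the derivation leading to Lemma \ref{bY tangential}, produces an integrability condition in which $D\bar Y - \frac{\kappa}{f^2}\langle \bar Y, X\rangle DX$ is coupled to the second fundamental form $h_{ab}$ of $\Sigma$ through a matrix pairing, plus an inhomogeneous piece depending on $\bar q$ and its first derivatives. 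The normal component of this identity then reduces to a scalar second-order equation $\mathcal L \psi = F(\bar q)$ for a potential $\psi$ encoding the trace-free tangential part of the above expression. The symbol of $\mathcal L$ is controlled by $h^{ab}$, and convexity of $\Sigma$ together with the Gauss equation $\det h = K + \kappa > 0$ guarantees that $\mathcal L$ is uniformly elliptic.

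The final step is to solve $\mathcal L \psi = F(\bar q)$ by the Fredholm alternative on $C^{2,\alpha}(S^2)$. By Theorem \ref{main}, the kernel of the associated homogeneous problem coincides with the restriction to $\Sigma$ of Killing vectors of $\mathbb H^3_{-\kappa}$, a six-dimensional space. A formal self-adjointness for $\mathcal L$, obtained via the same Stokes-type pairing used in Lemma \ref{Psi} and the proof of Theorem \ref{main}, yields a cokernel of matching dimension; the required compatibility $\langle F(\bar q), \xi \rangle = 0$ for each Killing field $\xi$ should follow from an integration-by-parts identity analogous to $\int_\Sigma \Psi = 0$, using that $D\xi \odot DX = 0$ for any Killing $\xi$. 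Once $\psi$ is in hand, $\bar Y$ and then $\tau$ are recovered by integration along the flat connection $\tilde D$, with smoothness following from standard elliptic regularity. The main technical obstacle I anticipate is the explicit verification of this Fredholm compatibility for arbitrary smooth symmetric $\bar q$; although Theorem \ref{main} makes the dimension count work out, the Stokes-type identity must be carried out with care to confirm that the cokernel indeed annihilates every source of the form $F(\bar q)$.
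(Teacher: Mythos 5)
Your overall strategy coincides with the paper's: reduce the vector system, via the flat twisted connection $\tilde{D}$ and convexity, to a single second-order scalar elliptic equation, and then invoke Fredholm theory with the kernel identified through the infinitesimal rigidity of Theorem \ref{main}. However, there is a genuine gap at exactly the point you flag: you never verify that the inhomogeneous term is $L^2$-orthogonal to the kernel of the scalar operator for an \emph{arbitrary} smooth $\bar{q}$, and this is the substance of the proposition, since it is precisely the statement that the linearized isometric embedding operator is surjective. Abstract self-adjointness and a dimension count only give solvability modulo a finite-dimensional obstruction; they do not tell you the obstruction vanishes. Your proposed mechanism (``$D\xi \odot DX=0$ for Killing $\xi$ plus a Stokes identity analogous to $\int_\Sigma \Psi=0$'') is not sufficient as stated: the identity $\int_\Sigma \Psi = 0$ is the rigidity tool, and the compatibility condition depends on the concrete structure of the source, namely $T=-\sum_a(\bar{q}_{1a}h_{a2}+\bar{q}_{2a}h_{a1})$ and $c_a=\nabla_1\bar{q}_{a2}-\nabla_2\bar{q}_{a1}$, paired against the specific kernel elements.

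In the paper this step is carried out explicitly. The kernel of the reduced scalar equation consists of $\bar{w}=\langle Y_0,\nu\rangle+\kappa\langle Y_0,X\rangle\langle X,\nu\rangle$ for constant vectors $Y_0$ (note it is only $3$-dimensional, not $6$: the translational Killing fields $fZ_0$ produce $w\equiv 0$, so your ``six-dimensional cokernel'' count is off for the scalar problem). Introducing $\bar{w}_a=\langle Y_0,e_a\rangle+\kappa\langle Y_0,X\rangle\langle X,e_a\rangle$ and using $\tilde{D}Y_0=0$ to get $\nabla_a\bar{w}=h_{ab}\bar{w}_b$ and $\nabla_a\bar{w}_b=-h_{ab}\bar{w}+\kappa f\langle Y_0,X\rangle\delta_{ab}$, one integrates by parts in $\int \bigl(T\bar{w}-(h^{-1})^{ab}c_b\nabla_a\bar{w}\bigr)$ and the terms cancel identically, for every $\bar{q}$. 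Without supplying this (or an equivalent) computation, your argument establishes ellipticity and the Fredholm setup but not existence; the remainder of your construction (recovering $v_1,v_2$ and then $\tau$ by integrating the flat connection, with elliptic regularity for smoothness) matches the paper and is fine once the compatibility is proved.
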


\begin{proof}
For a fixed point $p$, we choose an orthonormal frame $\{e_1,e_ 2\}$ with $D_{e_a}^T e_b(p)= 0$ and write $D_a$ for $D_{e_a}$ and $\bar{q}_{ab}$ for $\bar{q}(e_a,e_b)$. Equation (\ref{inhomo inf def}) implies the symmetric part of tangential components of $D\tau$ is $f\bar{q}_{ab},$ equivalently, 
\begin{align}
\frac{1}{2}\lt( \tilde{D}_a \lt( \frac{\tau}{f} \rt)\cdot e_b + \tilde{D}_b \lt( \frac{\tau}{f} \rt)\cdot e_a \rt)= \bar{q}_{ab}.
\end{align}
To solve (\ref{inhomo inf def}), we introduce new dependent variables. We define $v_1, v_2$ and $w$ by
\begin{align}
v_a = \tilde{D}_a \lt( \frac{\tau}{f} \rt)\cdot \nu, \quad a=1,2,
\end{align}
and
\begin{align}
\frac{w}{f^2} = \frac{1}{2}\lt( \tilde{D}_a \lt( \frac{\tau}{f} \rt)\cdot e_b - \tilde{D}_b \lt( \frac{\tau}{f} \rt)\cdot e_a \rt)
\end{align}

The triplet $\{v_1, v_2, w \}$ completely determines $\tilde{D} \lt( \frac{\tau}{f} \rt)$:
\begin{align}
\tilde{D}_a \left( \frac{\tau}{f}\right) = \sum_{b=1}^2 \left( \bar{q}_{ab} + \frac{w}{f^2} \epsilon_{ab} \right) e_b + v_a \nu \label{tau in terms of w and u}
\end{align}
where $\epsilon_{ab} = \begin{pmatrix} 0&1 \\-1&0 \end{pmatrix}$.

Let $\na$ denote the Levi-Civita connection with respect to $\sigma.$ Since $\tilde{D}$ is a flat connection, we have 
\begin{align*}
0 &= \tilde{D}_a \tilde{D}_b \left( \frac{\tau}{f}\right) - \tilde{D}_b \tilde{D}_a \left( \frac{\tau}{f}\right)\\
&= \sum_{c=1}^2 \left[ \na_a \left( \bar{q}_{bc} + \frac{w}{f^2} \epsilon_{bc} \right) e_c - \lt( \bar{q}_{bc} + \frac{w}{f^2}\epsilon_{bc} \rt) h_{ac} \nu + v_b h_{ac} e_c \right] + \frac{\kappa}{f} \lt( \bar{q}_{ba} + \frac{w}{f^2}\epsilon_{ba} \rt)X \\
& \quad - \lt( a,b \quad antisymmetric \rt).
\end{align*}
Comparing the tangential and normal components, we have
\begin{align}
\lt( \begin{array}{cc}
h_{11} & h_{21}\\
h_{12} & h_{22}
\end{array} \rt)
\lt( \begin{array}{c}
v_2\\
-v_1
\end{array} \rt)
&=
\lt( \begin{array}{cc}
\frac{\na_1 w}{f^2} -c_1 \\
\frac{\na_2 w}{f^2} -c_2
\end{array} \rt) \label{umatrix},
\end{align}
and
\begin{align}
\na_1 v_2 - \na_2 v_1 &= T -\frac{Hw}{f^2} + \frac{2\kappa}{f^3} w \langle X,\nu \rangle, \label{Hodge du}
\end{align}
where $T=- \sum_{a=1}^2 (\bar{q}_{1a} h_{a2} + \bar{q}_{2a}h_{a1})$ and $c_a= \na_1 \bar{q}_{a2} - \na_2 \bar{q}_{a1}.$
We substitute (\ref{umatrix}) into (\ref{Hodge du}) to obtain an elliptic equation
\begin{align}
 \na_a \lt( (h^{-1})^{ab} \frac{\na_b w}{f^2} \rt) + \frac{Hw}{f^2} - \frac{2\kappa}{f} \langle X,\nu \rangle w = T + \na_a \lt( (h^{-1})^{ab}c_b \rt). \label{elliptic}
\end{align}
as $\det\lt( h_{ab} \rt) >0$.

In order to solve the self-adjoint elliptic equation (\ref{elliptic}), we need to show the quantity of the right-hand side is perpendicular to the kernel of the operator. The homogeneous equation associated with (\ref{elliptic}) corresponds to an infinitesimal isometric deformation. From the infinitesimal rigidity result in Section \ref{rigid}, the general solution comes from Killing vector fields $Y_0 \times X + fZ_0.$ 

For $\tau = Y_0 \times X + fZ_0$, by (\ref{rotation vector of a Killing vector}),  
\begin{align*}
w 
= \frac{1}{2}f^2 \lt( \tilde{D}_{e_1} \lt( \frac{\tau}{f} \rt)\cdot e_2 - \tilde{D}_{e_2} \lt( \frac{\tau}{f} \rt)\cdot e_1 \rt)
= \langle Y_0, \nu \rangle + \kappa \langle Y_0, X \rangle \langle X, \nu \rangle. 
\end{align*}
Hence the kernel is of the form
\begin{align*}
\bar{w} = \langle Y_0,\nu \rangle + \kappa \langle Y_0,X \rangle \langle X,\nu \rangle
\end{align*}
for any constant vector $Y_0$. 
To show 
\begin{align*}
\int T\bar{w} - (h^{-1})^{ab}c_b \na_a \bar{w} =0, 
\end{align*}
we introduce new quantities 
\begin{align*}
\bar{w}_a = \langle Y_0,e_a \rangle + \kappa \langle Y_0,X \rangle \langle X,e_a \rangle, a=1,2.
\end{align*} 
Since $\tilde{D}Y_0=0$, we have the following
\begin{align*}
\na_a \bar{w} = \sum_{b=1}^2 h_{ab} \lt( \langle Y_0,e_b \rangle + \kappa \langle Y_0,X \rangle \langle X,e_b \rangle \rt)= \sum_{b=1}^2 h_{ab} \bar{w}_b,
\end{align*}
and 
\begin{align*}
\na_a \bar{w}_b = -h_{ab}\bar{w} + \kappa f \langle Y_0,X \rangle \delta_{ab}.
\end{align*}
We are now ready to verify that 
\begin{align*}
\int T\bar{w} - (h^{-1})^{ab}c_b \na_a \bar{w} &= \int T\bar{w} -\sum_{a=1}^2 c_a \bar{w}_a \\
&= \int T\bar{w} + \sum_{a=1}^2 \left( \bar{q}_{a2} \na_1 \bar{w}_a - \bar{q}_{a1} \na_2 \bar{w}_a \right) \\
&= \int T \bar{w} - \sum_{a=1}^2 \bar{q}_{a2}h_{1a} \bar{w} + \bar{q}_{12} \kappa f \langle Y_0,X \rangle + \sum_{a=1}^2 \bar{q}_{a1}h_{2a} \bar{w} - \bar{q}_{21} \kappa f \langle Y_0,X \rangle\\
&=0.
\end{align*}
From Hilbert's theory, $w$ can be solved for (\ref{elliptic}). By the regularity theory for elliptic equations, $w$ is smooth. We then solve $u_1$ and $u_2$ from $w$ in (\ref{umatrix}). At last, choose a point $p$ and initial value $\tau(p)$ and integrate (\ref{tau in terms of w and u}) along paths to get $\tau$.
\end{proof}

We are in the position to prove Theorem \ref{openness}.
\begin{proof}[Proof of Theorem 16]

Given a vector-valued function $\mz$, let $\my=\phi(\mz)$ be the solution of (\ref{inhomo inf def}) with the right-hand side $\bar{q}=q_{ab}(\mz).$ Note that $\my$ solves (\ref{isometric embedding equation for nearby metric}) if 
$\my$ is a fixed point of $\phi.$ We intend to apply the contraction mapping principle to find a fixed point. First of all, we need an a priori estimate of the solution of (\ref{inhomo inf def}).

\begin{lem}[\cite{Hang-Hong06}, Lemma 9.2.4]\label{key estimate}
Given $0 < \alpha < 1$, and $\mz \in T_{\mr}\mathbb{R}^3$, there exist a smooth solution $\my$ of (\ref{inhomo inf def}) and a constant $C$ depending on $\alpha$ and $\Sigma$ such that
\begin{align*}
|\my|_{2,\alpha} &\leq C \left( \left| \frac{q(\mz)}{f} \right|_{1,\alpha} + \left|\na_1(f^2 c_2) - \na_2(f^2 c_1)\right|_{\alpha}\right) \\ 
&\leq C\left( |q(\mz)|_{1,\alpha} + |\na_1 c_2 - \na_2 c_1|_{\alpha}\right).
\end{align*}
Here $c_a = \na_1 (q(\mz)_{a2}) - \na_2 (q(\mz)_{a1})$ is defined as in the proof of the previous lemma. 
\end{lem}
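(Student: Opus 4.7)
The plan is to track the construction in Proposition \ref{linear prop} and, at each of its three stages, invoke a sharp elliptic or algebraic estimate to control norms of the output by norms of the input. Concretely, writing $\bar q = q(\mz)$, the recipe was: (i) solve the second-order elliptic equation (\ref{elliptic}) for $w$; (ii) recover $v_1,v_2$ from the linear algebraic relation (\ref{umatrix}); (iii) integrate (\ref{tau in terms of w and u}) along paths to obtain $\tau/f$ and hence $\my$. All coefficients in these steps depend only on the fixed convex surface $\Sigma \subset \mathbb{H}^3_{-\kappa}$, the background metric, the static potential $f$ (bounded between positive constants on $\Sigma$), the positive-definite second fundamental form $h_{ab}$, and $\langle X,\nu\rangle$; thus every Hölder-norm constant that appears may be absorbed into a single constant $C=C(\alpha,\Sigma)$.

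First I would recast (\ref{elliptic}) in the form most suitable for Schauder theory. After multiplying through by $f^2$ and regrouping, the right-hand side becomes a combination of $f^2 T$ (which is pointwise bounded by a constant multiple of $|\bar q/f|_{0}$ times the curvature of $\Sigma$) and the divergence of $f^2(h^{-1})^{ab} c_b$. Since the curl $\nabla_1(f^2 c_2)-\nabla_2(f^2 c_1)$ appears as the obstruction to writing $f^2 c_b$ as an exact 1-form, standard Hodge theory on $(S^2,\sigma)$ decomposes $f^2 c_b$ as gradient plus co-closed part, and the co-closed part is controlled in $C^{1,\alpha}$ by $\bigl|\nabla_1(f^2 c_2)-\nabla_2(f^2 c_1)\bigr|_\alpha$. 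Substituting this decomposition, the gradient part can be integrated into the unknown (a harmless change of variable), so the effective right-hand side of the rewritten (\ref{elliptic}) lies in $C^\alpha$ with norm bounded by $|\bar q/f|_{1,\alpha} + \bigl|\nabla_1(f^2 c_2)-\nabla_2(f^2 c_1)\bigr|_\alpha$.

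Next I would apply the Fredholm alternative. The operator is self-adjoint and, by the infinitesimal rigidity established in Theorem \ref{main} together with the computation in Proposition \ref{linear prop}, its kernel is the three-dimensional space $\{\bar w = \langle Y_0,\nu\rangle + \kappa\langle Y_0,X\rangle\langle X,\nu\rangle : Y_0 \in \mathbb R^3\}$, and the solvability condition was verified in Proposition \ref{linear prop}. Choosing the unique solution $w$ orthogonal to this kernel, standard $C^{2,\alpha}$ Schauder estimates yield
\begin{align*}
|w|_{2,\alpha} \leq C\Bigl(\bigl|\tfrac{\bar q}{f}\bigr|_{1,\alpha} + \bigl|\nabla_1(f^2 c_2)-\nabla_2(f^2 c_1)\bigr|_\alpha\Bigr).
\end{align*}
Because $\det h_{ab} = f^2(K+\kappa) > 0$ on $\Sigma$, the linear system (\ref{umatrix}) is uniformly invertible, so $|v_1|_{1,\alpha} + |v_2|_{1,\alpha}$ is bounded by $|w|_{2,\alpha} + |\bar q|_{1,\alpha}$, hence by the same right-hand side.

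Finally, with $\tilde D(\tau/f)$ now known in $C^{1,\alpha}$ via (\ref{tau in terms of w and u}), the flatness of $\tilde D$ (Lemma \ref{flat}) ensures that the 1-form $\tilde D(\tau/f)$ is compatible (closed with respect to the flat connection) in the sense needed for simply connected integration on $S^2$; integrating along paths from a fixed basepoint produces $\tau/f$ uniquely up to an element of the three-dimensional $\tilde D$-parallel subspace of $E$ (the constant vectors of Proposition \ref{const vec}), which corresponds exactly to the translational Killing vectors that do not alter the isometric embedding. Fixing that three-dimensional ambiguity by a linear normalization of $\my$ at a point, one gets $|\my|_{2,\alpha} \le C(|w|_{2,\alpha} + |v|_{1,\alpha} + |\bar q|_{1,\alpha})$, and combining with the previous estimate yields the first bound in the statement. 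The second inequality follows by distributing $\nabla(f^2 \cdot)$ and absorbing the derivatives of the smooth, bounded factor $f^2$ into the constant $C$. The main obstacle, as usual in this circle of ideas, is the sharpness of the Hodge/Schauder bookkeeping required to produce the clean curl-form $\nabla_1(f^2c_2)-\nabla_2(f^2c_1)$ on the right rather than a full $C^{1,\alpha}$ norm of $c$; without that sharpening the contraction argument in the sequel would not close.
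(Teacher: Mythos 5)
Your route is the intended one (the paper itself does not prove this lemma; it cites Han--Hong, Lemma 9.2.4, and the intended argument is exactly to run the construction of Proposition \ref{linear prop} quantitatively), and your key idea --- Hodge-decomposing $f^2c_b$ so that only the curl $\nabla_1(f^2c_2)-\nabla_2(f^2c_1)$ is needed in $C^\alpha$, the gradient part being absorbed into the unknown --- is precisely the sharpening that makes the estimate close. However, as written the middle of your argument contains a genuine error: the displayed claim $|w|_{2,\alpha}\leq C\bigl(|\bar q/f|_{1,\alpha}+|\nabla_1(f^2c_2)-\nabla_2(f^2c_1)|_\alpha\bigr)$ is false under the stated hypotheses. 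With only $\bar q\in C^{1,\alpha}$, the 1-form $c$ is merely $C^\alpha$, and the right-hand side of (\ref{elliptic}) is $T+\nabla_a((h^{-1})^{ab}c_b)$, i.e.\ a $C^\alpha$ function plus the divergence of a $C^\alpha$ field; the solution $w$ is then in general only $C^{1,\alpha}$. After your change of variable the $C^{2,\alpha}$ Schauder estimate applies to the shifted unknown $\tilde w=w-\phi$ (where $f^2c_b=\nabla_b\phi+\epsilon_b^{\;d}\nabla_d\psi$), not to $w$ itself, since $\phi$ is only as regular as $c$ allows, namely $C^{1,\alpha}$ with $|\phi|_{1,\alpha}\leq C|\bar q|_{1,\alpha}$. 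The same conflation then infects the next step: you bound $|v|_{1,\alpha}$ by $|w|_{2,\alpha}+|\bar q|_{1,\alpha}$, but (\ref{umatrix}) expresses $v$ through the combination $\frac{\nabla_b w}{f^2}-c_b$, and putting $v$ in $C^{1,\alpha}$ from $w$ and $c$ separately would require $|c|_{1,\alpha}$, which is not controlled by $|\bar q|_{1,\alpha}$ (it would cost a $C^{2,\alpha}$ norm of $\bar q$, i.e.\ third derivatives of $\mathbf{z}$ in the application --- exactly what the lemma is designed to avoid).

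The repair is your own device, used consistently: write $\frac{\nabla_b w}{f^2}-c_b=\frac{\nabla_b\tilde w}{f^2}-\frac{1}{f^2}\epsilon_b^{\;d}\nabla_d\psi$, note that $|\psi|_{2,\alpha}\leq C\bigl(|\nabla_1(f^2c_2)-\nabla_2(f^2c_1)|_\alpha+|\bar q|_{1,\alpha}\bigr)$ and $|\tilde w|_{2,\alpha}$ is bounded by the same quantity via Schauder/Fredholm for the rewritten (\ref{elliptic}), so this combination --- hence $v$ via (\ref{umatrix}) --- lies in $C^{1,\alpha}$ with the right bound. For $w$ itself only the $C^{1,\alpha}$ bound $|w|_{1,\alpha}\leq|\tilde w|_{1,\alpha}+|\phi|_{1,\alpha}$ is available, and that suffices because $w$ enters (\ref{tau in terms of w and u}) undifferentiated. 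With these substitutions your steps (ii)--(iii) (invertibility of $h$, closedness of the $E$-valued 1-form from (\ref{umatrix}) and (\ref{Hodge du}) plus flatness of $\tilde D$, normalization modulo the three-dimensional parallel subspace) go through and yield the stated estimate; the final passage from the $f^2$-weighted curl to $|\nabla_1c_2-\nabla_2c_1|_\alpha+|\bar q|_{1,\alpha}$ is fine as you say.
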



Observe that $\na_1 (f^2 c_2) - \na_2 (f^2 c_1)$ does not involve the third derivatives of $\mz$ and every term contains at least two $\mz$'s.  Lemma \ref{key estimate} implies
\begin{align*}
|\phi(\mz)|_{2,\alpha} \leq C_1 \left( |\sigma'_{ab} - \sigma_{ab}|_{2,\alpha} + |\mz|_{2,\alpha}^2\right).
\end{align*} 
Note that the solution is linear in $q_{ab}$. Thus, if $\my$ is the solution of $g_{ij}(\mr) \left( \mr_a^i D_b \my^j + D_a \my^i \mr_b^j \right) = q_{ab}(\mz),$ and $\my'$ is the solution of $g_{ij}(\mr) \left( \mr_a^i D_b \my'^j + D_a \my'^i \mr_b^j \right) = q_{ab}(\mw),$ then the difference $\my-\my'$ satisfies the equation with right-hand side $\bar{q}= q(\mz) - q(\mw).$ Hence, we have
\begin{align*}
|\my-\my'|_{2,\alpha} \leq C\lt(|\bar{q}|_{1,\alpha}+ |\na_1 c_2 - \na_2 c_1|_{\alpha}\rt),
\end{align*}
where $c_1,c_2$ are expressed in terms of the coefficients of $\bar{q}$. The term $ |\na_1 c_2 - \na_2 c_1|_{\alpha}$ does not involve the derivatives of $\my$ and $\my'$ of order higher than two. Given a symmetric bilinear form $A_{ij}$ and two sections $\alpha,\beta \in \Gamma(S^2,E \otimes T^*S^2),$ define $A (\alpha \cdot \beta) = \frac{1}{2} A_{ij}(\alpha_a^i \beta_b^j + \alpha_b^i \beta_a^j)$. Note that $A(d\mz \cdot d\mz - d\mw \cdot d\mw) = A(d(\mz + \mw)\cdot d( \mz -\mw))$ for any $A$. Hence we have 
\begin{align*}
q(\mz) -q(\mw) &= -g(d(\mz+\mw) \cdot d(\mz-\mw)) \\
&\quad - \Big[ (\mz-\mw)^m \left( \int_0^1 \lt( \frac{\pl F_{ijkl}}{\pl \my^m} \rt) (t\mz + (1-t)\mw) dt \right)\mz^k\mz^l \\
&\quad\qquad + F_{ijkl}(\mw) (\mz+\mw)^k (\mz-\mw)^l \Big]  \mr_a^i \mr_b^j \\
&\quad -\Big[ (\mz-\mw)^m \left( \int_0^1 \frac{\pl G_{ijk}}{\pl \my^m}(t\mz + (1-t)\mw) dt \right) \mz^k(2d\mr \cdot d\mz + d\mz \cdot d\mz) \\
&\quad\qquad + G_{ijk}(\mw) (\mz-\mw)^k (2d\mr \cdot d\mz + d\mz \cdot d\mz) \\
&\quad\qquad + G_{ijk}(\mw) \mw^k (2d\mr \cdot d(\mz-\mw) + d(\mz+\mw) \cdot d(\mz-\mw)) \Big] 
\end{align*}
If $|\mz|_{2,\alpha},|\mw|_{2,\alpha} < 1,$ then
\begin{align*}
|\phi(\mz)-\phi(\mw)|_{2,\alpha} \leq C_2 \left( |\mz|_{2,\alpha} + |\mw|_{2,\alpha } \right) |\mz-\mw|_{2,\alpha}.
\end{align*} 
If we choose $\mu <1$ such that $C_1\mu <\frac{1}{2}$ and $2C_2 \mu < 1,$ then for any metric $\sigma'$ with $C_1|\sigma' - \sigma|_{2,\alpha} < \frac{\mu}{2},$ $\phi: B_\mu \rw B_\mu$ is a contraction mapping in $C^{2,\alpha}.$ The existence of solution to (\ref{linear}) follows from contraction mapping principle. This completes the proof of Theorem \ref{openness}.
\end{proof}

From Theorem \ref{openness} and Lemma \ref{Ricci flow on surfaces}, $I$ is open and non-empty as $[T_0,\infty) \subset I$ for some large $T_0.$

\subsection{Closedness}
To prove closedness, we have to establish the a priori estimate for the isometric embedding. Suppose we have a sequence of isometric embeddings $\mr_{t_i}$ with $t_i \rw T.$ Recall that we fix a diffeomorphism $\varphi:\mathbb{H}^3_{-\kappa} \rw \mathbb{R}^3$. 
\begin{defn}
We say that a surface $\Sigma \subset \mathbb{H}^3_{-\kappa}$ {\it is centered at the origin} if $\varphi(\Sigma)$ has center of mass at (0,0,0).
\end{defn} 
\noindent By an isometry in $\mathbb{H}^3_{-\kappa},$ we may assume that the embeddings $\mr_{t_i}$ are centered at the origin. 

In two-dimensional spaces, the Ricci flow equation (\ref{normalized Ricci flow}) can be rewritten as a parabolic equation of a scalar function. By the uniformization theorem, $\bar{\sigma}= e^{2\bar{u}} \hat{\sigma}$ for a metric $\hat{\sigma}$ with constant Gauss curvature $\hat{K}$ and the same area as $\bar{\sigma}$. Let $\sigma_t = e^{2u_t}\hat{\sigma}$, then (\ref{normalized Ricci flow}) becomes an equation of $u$
\begin{align}\label{parabolic equation of the conformal factor}
\left\{ \begin{array}{rl}
\frac{\pl u}{\pl t} &= \hat{K} - K \\
u_0 &= \bar{u}.
\end{array}\right.
\end{align} 
Moreover, by the work of M. Struwe, we have the following:
\begin{thm}[\cite{Struwe02}, Theorem 6.1]\label{convergence of the parabolic equation}
For any $u_0 \in H^2(S^2,\hat{\sigma})$, there exist a unique global solution $u$ of (\ref{parabolic equation of the conformal factor}) and a smooth limit $u_\infty$ corresponding to a smooth metric $\sigma_\infty = e^{2u_\infty} \hat{\sigma}$ of constant curvature such that
\begin{align}\label{estimate of the conformal factor}
\|u(t) - u_\infty \|_{H^2} \leq C e^{-\alpha t}
\end{align}
for some constants $C$ and $\alpha$ depending only on $\hat{\sigma}$ and $u_0$
\end{thm}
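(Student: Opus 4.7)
The plan is to treat equation (\ref{parabolic equation of the conformal factor}) as a gradient flow of a conformally structured energy and to combine monotonicity with a concentration-compactness analysis in the style of Struwe. First I would rewrite the equation as a quasilinear parabolic PDE for $u$ by noting that $K = e^{-2u}(\hat{K} - \hat{\Delta}u)$, so (\ref{parabolic equation of the conformal factor}) becomes the weighted $L^2$-gradient flow of a Liouville-type functional $E(u) = \int_{S^2}\bigl(\tfrac{1}{2}|\hat{\nabla}u|^2 + \hat{K}u\bigr)\,d\mu_{\hat\sigma} - \tfrac{r}{2}\int_{S^2} e^{2u}\,d\mu_{\hat\sigma}$, where $r$ is the constant that renders the flow area-preserving. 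The basic identity $\tfrac{d}{dt}E(u) = -\int_{S^2}(\hat{K}-K)^2 e^{2u}\,d\mu_{\hat\sigma}$, together with conservation of total area and the Gauss--Bonnet constraint, immediately yields the integrated dissipation bound $\int_0^\infty\!\!\int_{S^2}(\hat K-K)^2 e^{2u}\,d\mu_{\hat\sigma}\,dt < \infty$, which drives everything else.

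The second step is to secure uniform-in-time bounds on $u(t)$. Combining the Moser--Trudinger inequality on $S^2$ with the area constraint and the energy bound furnishes uniform $H^1$ control modulo the non-compact action of the conformal group. To exclude that $u(t_k)$ develops a concentrating bubble as $t_k\to\infty$, I would run the standard concentration-compactness dichotomy: a concentrating profile, after M\"obius rescaling, would force the dissipation integral to be infinite or would produce a nontrivial bubble whose threshold Moser--Trudinger energy is incompatible with the monotonically decreasing value of $E(u(t))$. Once concentration is excluded, parabolic bootstrap applied to the quasilinear equation upgrades $H^1$ bounds to uniform $C^k$ bounds.

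With smooth uniform bounds in hand, one can extract a subsequence $t_k\to\infty$ along which $\|\hat{K}-K(u(t_k))\|_{L^2}\to 0$; any limit $u_\infty$ then represents a metric of constant curvature. To promote subsequential convergence to full convergence with an exponential rate, I would linearize the flow at $u_\infty$. The linearization $\mathcal{L} = \hat{\Delta}_{\sigma_\infty} + 2K_\infty$ has a nontrivial kernel corresponding to the conformal M\"obius transformations of $S^2$, but a spectral gap on the orthogonal complement. Normalizing $u(t)$ by a time-dependent M\"obius transformation so as to fix the center of mass removes the kernel, and then a Lojasiewicz--Simon inequality for the analytic functional $E$ (or a direct energy--coercivity argument once $u(t)$ enters a small $H^1$-neighborhood of $u_\infty$) yields the estimate $\|u(t)-u_\infty\|_{H^2}\leq Ce^{-\alpha t}$.

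The hardest step is the exclusion of concentration in step two: since the conformal group of $S^2$ is non-compact, bounded-energy sequences can bubble at exactly the critical Moser--Trudinger threshold, and ruling this out along the \emph{flow} (rather than for individual minimizers of a static variational problem) requires a careful quantitative blow-up analysis balancing the integrated dissipation against the energy absorbed by a developing bubble. Once this obstacle is overcome, the remaining smoothness, compactness, and rate statements all follow by relatively standard parabolic and spectral arguments.
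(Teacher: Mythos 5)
First, note that the paper does not prove this statement at all: it is quoted verbatim as \cite{Struwe02}, Theorem 6.1, and used as a black box in the closedness argument, so the only fair comparison is with Struwe's own proof, which your outline attempts to reconstruct.

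As a proof, your proposal has a genuine gap, and it sits exactly where you yourself locate the difficulty. The mechanism you propose for excluding concentration does not work on $S^2$: the infimum of the Liouville/Onofri energy $E$ in the conformal class is attained along the full (non-compact) M\"obius orbit of constant-curvature metrics, and a concentrating sequence obtained by pulling back the round metric under divergent M\"obius maps (suitably normalized to preserve area) carries \emph{no} excess energy -- the functional is invariant along that orbit. So ``a nontrivial bubble whose threshold Moser--Trudinger energy is incompatible with the monotonically decreasing value of $E(u(t))$'' is false as stated: monotonicity of $E$, the area constraint, and the integrated dissipation bound are all compatible with $u(t_k)$ concentrating along the M\"obius orbit, and this is precisely why the genus-zero case is hard. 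Struwe's actual argument (and the earlier Hamilton--Chow proofs, via entropy and Harnack estimates) must control the flow's drift along this non-compact orbit by finer means -- a concentration-compactness alternative combined with quantitative estimates on the conformal group parameters and on the curvature, not by an energy-threshold comparison. Since you explicitly defer this step (``once this obstacle is overcome\dots''), the proposal is a plausible plan in the spirit of Struwe's paper rather than a proof. Two smaller points: the dissipation identity and gradient-flow structure in your first step are correct (with $r=\hat K$ and the $L^2(d\mu_{\sigma(t)})$ metric), but in the last step the theorem asserts exponential $H^2$ convergence of the \emph{un-normalized} solution $u(t)$, so after running a Lojasiewicz--Simon or spectral-gap argument for the M\"obius-normalized flow you still must show the time-dependent normalization parameters themselves converge (exponentially), which needs an explicit argument, e.g.\ integrating the exponentially decaying velocity.
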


From (\ref{estimate of the conformal factor}), the diameters of $\sigma_t$ are uniformly bounded. Thus, $\max_{\Sigma_{t_i}} \mr^i$ and $\max_{\Sigma_{t_i}} f$ are bounded by a constant depending only on $\bar{\sigma}$. This proves the uniform $C^0$-estimate for $\mr_{t_i}.$

The $C^1$-estimate follows from the isometric embedding equation
\begin{align*}
g_{ij} \dfrac{\pl \mr^i}{\pl u^a} \dfrac{\pl \mr^j}{\pl u^b} = \sigma_{ab}.
\end{align*}
Indeed, we may assume $g_{ij}$ is diagonal everywhere by a change of variable. Then for $i=1,2,3$, we can directly check
\begin{align*}
|\na \mr^i|^2 = \sigma^{ab}\mr^i_a\mr^i_b \leq C g_{jk} \sigma^{ab} \mr^j_a\mr^k_b = 2C
\end{align*}
where $C$ only depends on $\max_\Sigma f.$

In the following, we write $\sigma$ for $\sigma_{t_i}$ and $\mr$ for $\mr_{t_i}$ if there is no risk of confusion. The key to prove $C^2$-estimate is a uniform bound of the principal curvatures. Since $\Sigma$ is convex, it suffices to bound the mean curvature. The main difficulty lies in that the Gauss curvature of a convex surface in hyperbolic space may be negative somewhere. We remark that for the strictly convex case, the uniform bound of the mean curvature is proved by Pogorelov \cite[page 337-342]{Pogorelov73}.

\begin{thm}[Theorem B]\label{apriori estimate of mean curvature}
Let $\Sigma$ be a closed convex surface in $\mathbb{H}^3_{-\kappa}$, normalized so that $\Sigma$ is centered at the origin. Then
\begin{align*}
\max_\Sigma H \leq C,
\end{align*}
for some constant $C$ depending only on $\|f\|_{C^0(\Sigma)}$ and $\| K\|_{C^2(\Sigma)}.$
\end{thm}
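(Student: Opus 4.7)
The plan is to apply the strong maximum principle not to $H$ directly but to an auxiliary quantity of the form
\[
\Phi \;=\; \log H \,+\, \Lambda\,\psi,
\]
where $\Lambda>0$ is a large constant chosen at the end and $\psi$ is an ambient function restricted to $\Sigma$; the natural candidates are the static potential $\psi = f$ (whose ambient Hessian $D^2 f = \kappa\,f\,g$ is available for free from Proposition~1) and the support function $\psi = \langle X,\nu\rangle$. Weyl's direct argument on $H$ fails for the following reason. Using the Codazzi equations (valid because the ambient sectional curvature is parallel) together with the Gauss equation $|A|^2 = H^2 - 2(K+\kappa)$, one derives a Simons-type identity
\[
h^{ab}\na_a\na_b H \;=\; H\,P(k_1,k_2,\kappa) \,+\, h^{ab}\na_a\na_b K \,+\, Q(\na h),
\]
where $P$ is polynomial in the principal curvatures and $Q$ is quadratic in $\na h$. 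Where one principal curvature vanishes the matrix $(h^{ab})$ degenerates and the $P$-contribution becomes cubic in $H$ with no sign control, so no bound on $H$ can be extracted by the maximum principle applied to $H$ alone.

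\textbf{Carrying out the argument.} At a point $p\in\Sigma$ I would pick an orthonormal frame diagonalizing $h_{ab}=k_a\delta_{ab}$, with $k_1,k_2\geq 0$ by convexity, so the Gauss equation gives $k_1 k_2 = K+\kappa\geq 0$. The ambient identities $Df = \kappa X$ and $D_\xi X = f\xi$ from Proposition~1, combined with the decomposition $X = X^T + \langle X,\nu\rangle\,\nu$, yield an explicit expression for $h^{ab}\na_a\na_b f$ containing a term linear in $H$ with the right sign; after multiplying through by $\det(h) = K+\kappa$ this becomes exactly the $H^2$-sized positive quantity needed to dominate the bad cubic term in $P$. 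At an interior maximum $p_0$ of $\Phi$, the first-order condition $\na\Phi(p_0)=0$ lets me substitute $|\na H|/H = \Lambda|\na\psi|$, and the second-order inequality $h^{ab}\na_a\na_b\Phi(p_0)\leq 0$, combined with the two identities above and a large enough choice of $\Lambda$, reduces to
\[
c_0\,H(p_0)^2 \;\leq\; C_0\bigl(\|f\|_{C^0(\Sigma)},\,\|K\|_{C^2(\Sigma)}\bigr).
\]
The $C^2$-norm of $K$ enters only through the $h^{ab}\na_a\na_b K$ term, while the centered-at-origin normalization is what turns $\|f\|_{C^0(\Sigma)}$ into effective control on the ambient geometry of $\Sigma$ at $p_0$.

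\textbf{Main obstacle.} The delicate step is the calibration of $\psi$ and $\Lambda$. On the degenerate set $\{K+\kappa = 0\}$, which for a general convex surface in $\mathbb{H}^3_{-\kappa}$ can be an arbitrary closed set (unlike the Chang--Xiao setting, where it is assumed to be finite), the operator $h^{ab}\na_a\na_b$ loses ellipticity and the $P$-term produces an uncontrolled $H^3$ contribution. The correction $\Lambda\,h^{ab}\na_a\na_b\psi$ must therefore generate, \emph{after} multiplication by $\det(h)$, a term of size $H^2$ that remains uniformly coercive as the ratio $k_{\min}/k_{\max}\to 0$. Verifying that the static potential $\psi=f$ (or a carefully tailored modification of it, such as $f$ twisted by a power of the support function) has this property, and extracting from the resulting inequality a clean bound on $H(p_0)$ in terms of $\|f\|_{C^0(\Sigma)}$ and $\|K\|_{C^2(\Sigma)}$ alone, is the real technical heart of the argument, and is precisely where the hyperbolic identities $Df = \kappa X$ and $D_\xi X = f\xi$ become indispensable.
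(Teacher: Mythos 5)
Your overall strategy is the right one, and is in the same family as the paper's: apply the maximum principle to the logarithm of a curvature quantity corrected by a large multiple of an ambient radial function (the paper uses $F=\log\lambda+\alpha|X|^2/2$ with $\lambda$ the \emph{largest principal curvature}; since $\nabla f=\kappa X^T$ and $\nabla(|X|^2/2)=fX^T$, your candidate $\psi=f$ is essentially equivalent to the paper's choice), with the degenerate elliptic operator $(H\sigma^{ab}-h^{ab})\nabla_a\nabla_b$, which is what your $h^{ab}\nabla_a\nabla_b$ becomes after multiplying by $\det h=K+\kappa$. However, as written the proposal has a genuine gap at exactly the step you yourself flag as ``the real technical heart'': you never produce the mechanism that controls the gradient terms at the maximum point, and large $\Lambda$ alone cannot do it. Concretely, at a maximum of $\Phi=\log H+\Lambda\psi$ the first-order condition gives $\nabla H/H=-\Lambda\nabla\psi$, so the term $-(H\sigma^{ab}-h^{ab})\,H_aH_b/H^2$ contributes $-\Lambda^2\lambda\,\psi_2^{\,2}+\dots$, which is of size $\Lambda^2 H$ with the wrong sign, while the correction $\Lambda(H\sigma^{ab}-h^{ab})\psi_{;ab}$ (for $\psi=f$, using $D^2f=\kappa fg$) is only of size $\Lambda H$. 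Hence no choice of ``$\Lambda$ large'' closes the inequality; one needs a compensating \emph{positive} term quadratic in $\nabla h$ of size $2\Lambda^2 H\psi_2^{\,2}$, and identifying it is the whole point of the proof. In the paper this term is $\frac{2\lambda}{\lambda-\mu}\bigl((\lambda_2)^2+(\mu_1)^2\bigr)$, and it comes from working with the top eigenvalue $\lambda$ rather than $H$: the second-derivative formula for eigenvalues (the appendix identity $\lambda_{;ab}=h_{11;ab}+\frac{2}{\lambda-\mu}h_{12;a}h_{12;b}$), combined with the Codazzi equations and the twice-differentiated Gauss equation $\det h=K+\kappa$, yields an identity for $(H\sigma^{ab}-h^{ab})\lambda_{;ab}$ whose quadratic gradient term has coefficient $2$, beating the coefficient $1$ of the bad term; the remaining coefficient of $\lambda$ is $K+\alpha f^2\geq-\kappa+\alpha\min f^2>0$ by convexity and the choice $\alpha\min f^2>\kappa$, which is what makes the constant depend only on $\|f\|_{C^0}$ and $\|K\|_{C^2}$ even where $K+\kappa=0$. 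Your sketch contains neither this identity nor any substitute for it (your ``Simons-type identity'' leaves $P$ and $Q$ unspecified, and the claimed reduction to $c_0H^2\leq C_0$ is asserted, not derived), so the argument does not yet close.

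Two smaller points. First, if you insist on $\log H$ instead of $\log\lambda$, an analogous identity $(H\sigma^{ab}-h^{ab})H_{;ab}=\Delta K-2\nabla\lambda\cdot\nabla\mu+2(\lambda_2)^2+2(\mu_1)^2+K(\lambda-\mu)^2$ can be derived from Gauss and Codazzi, and the term $2(\lambda_2)^2$ can again be made to absorb the bad $\Lambda^2$ term after expressing $\lambda_a$ through the critical-point relation $\lambda_a+\mu_a=-\Lambda\psi_aH$ and $\mu\lambda_a+\lambda\mu_a=K_a$ (in the regime $\lambda>2\mu$, with the case $\lambda\leq2\mu$ handled trivially as in the paper); but this bookkeeping is precisely what is missing from your write-up. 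Second, your alternative $\psi=\langle X,\nu\rangle$ is the Pogorelov-type test function; its Hessian introduces $h_{ab;c}\langle X,e_c\rangle$ terms and the resulting inequality forces a dependence on a positive lower bound for $K+\kappa$, so it cannot give Theorem B in the stated generality, where $K+\kappa$ is allowed to vanish on an arbitrary closed set.
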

\begin{proof}
Let $\lambda \geq \mu$ be the two principal curvatures. Suppose $F$ achieves its maximum at $p.$ We may assume that $\lambda > 2\mu$ at $p$; otherwise $\lambda^2 \leq 2\lambda\mu = 2(K + \kappa)$ and the estimate clearly holds. We intend to apply the maximum principle to the test function $F = \log\lambda + \alpha \frac{|X|^2}{2}$, where $\alpha$ satisfies
\begin{align*}
\alpha \min f^2 > \kappa.
\end{align*}
In the following computation, we denote the covariant derivative with respect to $\sigma$ by $;$ or $\na$. Moreover, we write $\lambda_a$ for $\lambda_{;a}$ for the gradient of principal curvatures. The first and second derivatives of $F$ are given by
\begin{align}\label{first derivative}
F_a = \frac{\lambda_a}{\lambda} + \alpha f \langle X,e_a \rangle
\end{align}
and
\begin{align}
F_{;ab} = \frac{\lambda_{;ab}}{\lambda} - \frac{\lambda_a \lambda_b}{\lambda^2} + \kappa \alpha \langle X,e_a \rangle \langle X,e_b \rangle + \alpha f^2 \sigma_{ab} - \alpha f h_{ab}\langle X,\nu \rangle.
\end{align}
We compute each term in $(H\sigma^{ij} - h^{ij})F_{;ij}$. Starting with $(H\sigma^{ab} - h^{ab}) \lambda_{;ab}.$ We have
\begin{align*}
(H\sigma^{ab} - h^{ab}) \lambda_{;ab} = \mu \lambda_{;11} + \lambda \lambda_{;22}.
\end{align*}
By (\ref{Hessian of eigenvalue}) and the Codazzi equation,
\begin{align*}
(H\sigma^{ab} - h^{ab}) \lambda_{;ab} = \mu \left( h_{11;11} + \frac{2}{\lambda - \mu}(h_{11;2})^2 \right) + \lambda \left( h_{11;22} + \frac{2}{\lambda - \mu}(h_{22;1})^2 \right).
\end{align*}
By the Codazzi equation and commutation formula, $h_{11;22} = h_{22;11} + K(\lambda-\mu).$ Thus,
\begin{align*}
(H\sigma^{ab} - h^{ab}) \lambda_{;ab} = \mu \left( h_{11;11} + \frac{2}{\lambda - \mu}(\lambda_2)^2 \right) + \lambda \left( h_{22,11} + \frac{2}{\lambda - \mu}(\mu_1)^2 \right) + K \lambda(\lambda - \mu). 
\end{align*}
On the other hand, differentiating the Gauss equation $\det(h) = K + \kappa,$ we get
\begin{align*}
(K+\kappa)_{;ab} &= \left( (H\sigma^{cd} - h^{cd}) h_{cd;a} \right)_{;b} \notag\\
&= (H\sigma^{cd} - h^{cd})h_{cd;ab} + H_a H_b - \tensor{h}{^{cd}_{;b}}h_{cd;a}.
\end{align*}
In particular,
\begin{align}\label{Gauss equation;11}
K_{;11} &= \mu h_{11;11} + \lambda h_{22;11} + (\lambda_1 + \mu_1)^2 - (\lambda_1)^2 - 2(\lambda_2)^2 - (\mu_1)^2 \notag\\
&= \mu h_{11;11} + \lambda h_{22;11}  + 2\lambda_1\mu_1 - 2(\lambda_2)^2.
\end{align}
Therefore,
\begin{align}\label{Hessian of lambda}
(H\sigma^{ab} - h^{ab})\lambda_{;ab} = K_{;11} - 2\lambda_1\mu_1 + \frac{2\lambda}{\lambda-\mu} \left( (\lambda_2)^2 + (\mu_1)^2 \right) + K\lambda(\lambda - \mu).
\end{align}

At $p$, the derivatives $F_a =0$ and $F_{;ab} \leq 0$. We thus have
\begin{align}	
\begin{split}
\lambda_1 &= -\alpha f \langle X,e_1 \rangle \lambda,\\
\lambda_2 &= -\alpha f \langle X,e_2 \rangle \lambda,\\
\mu_1 &= \frac{K_1}{\lambda} + \frac{\alpha f (K+\kappa) \langle X,e_1 \rangle}{\lambda} = O(1/\lambda),\\
\mu_2 &= \frac{K_2}{\lambda} -  \frac{\alpha f (K+\kappa) \langle X,e_2 \rangle}{\lambda} = O(1/\lambda);
\end{split}
\end{align}
and
\begin{align*}
0 &\geq \left( H\sigma^{ab} - h^{ab}\right)F_{;ab} \\
&= \frac{1}{\lambda} \left( K_{;11} - 2\lambda_1\mu_1 + \frac{2\lambda}{\lambda-\mu} \left( (\lambda_2)^2 + (\mu_1)^2 \right) + K\lambda(\lambda - \mu) \right)\\
&\quad - \alpha^2 f^2 \langle X,e_1 \rangle^2 \mu - \alpha^2 f^2 \langle X,e_2 \rangle^2 \lambda \\
&\quad + \kappa \alpha \left( \mu\langle X,e_1\rangle^2 + \lambda \langle X,e_2 \rangle^2 \right) + \alpha f^2 (\lambda + \mu) - 2 \alpha f (K+\kappa) \langle X,\nu \rangle \\
&= \left( 2 \alpha^2 f^2 \langle X,e_2 \rangle^2 +K - \alpha^2 f^2 \langle X,e_2 \rangle^2 + \kappa\alpha \langle X,e_2 \rangle^2 + \alpha f^2 \right) \lambda+ O(1) \\
&\geq (K + \alpha f^2) \lambda + O(1).
\end{align*}
Note that the first term in the last equality comes from $(\lambda_2)^2$. Here we say a function $G = O(\lambda^p)$ if there exist some constants $c$ and $C$ depending only on $\|K\|_{C^2(\Sigma)}$ and  $\|f\|_{C^0(\Sigma)}$ such that $c\lambda^p \leq G \leq C\lambda^p$ when $\lambda \geq 1.$

From our assumption on $\alpha,$ $\lambda(p) \leq C.$ For other points $q \in \Sigma,$
\begin{align*}
\lambda(q) \leq \lambda(p) \frac{e^{|X|^2}(p)}{e^{|X|^2}(q)} \leq C.
\end{align*}
\end{proof}

We are in the position to prove the $C^2$-estimate. Writing $D_{\mr_b} \mr_a$ in two ways
\begin{align*}
\left( D_{\mr_b} \mr_a \right)^i &= \dfrac{\pl^2 \mr^i}{\pl u^a \pl u^b} + \Gamma^i_{jk} \dfrac{\pl \mr^j}{\pl u^a} \dfrac{\pl \mr^k}{\pl u^b}\\
&= \Gamma^c_{ab} \dfrac{\pl \mr^i}{\pl u^c} - h_{ab} \nu^i,
\end{align*}
we obtain
\begin{align}
\na_b \na_a \mr^i = -h_{ab}\nu^i - \Gamma^i_{jk} \dfrac{\pl \mr^j}{\pl u^a} \dfrac{\pl \mr^k}{\pl u^b} \label{hessian of position vector}
\end{align} 
Hence $\|\mr^i\|_{C^2} \leq C$ where $C$ depends on the upper bound of principal curvatures and $\|\mr^i\|_{C^1}.$ By Arzela-Ascoli theorem, a subsequence of $\mr_{t_i}$ converges to some $\mr_T \in C^{1,1}.$ This completes the proof of Theorem \ref{isom}.

When $K > \kappa,$ the continuity method actually produces a smooth isometric embedding.
\begin{thm}\label{isom>0}
Let $\bar{\sigma}$ be a smooth metric on $S^2$ with Gauss curvature $K >-\kappa$. Then there exists a smooth isometric embedding $i: (S^2, \bar{\sigma}) \rw \mathbb{H}^3_{-\kappa}$ which is unique up to congruence. 
\end{thm}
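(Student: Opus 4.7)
The plan is to combine Theorem \ref{isom} with a regularity bootstrap for existence, and combine the openness and infinitesimal rigidity results for uniqueness.

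First I would invoke Theorem \ref{isom} to obtain a $C^{1,1}$ convex isometric embedding $\mr: (S^2, \bar\sigma) \rw \mathbb{H}^3_{-\kappa}$. The key new input under $K > -\kappa$ is that compactness of $S^2$ yields $K + \kappa \geq c_0 > 0$, and combined with the Gauss equation $\det h = K + \kappa$ and the mean curvature bound of Theorem \ref{apriori estimate of mean curvature}, this gives uniform two-sided positive bounds on the principal curvatures. A local scalar formulation of the isometric embedding equation (writing the embedding as a graph over a totally geodesic reference plane, yielding a Monge--Amp\`ere/Darboux type equation) is then uniformly elliptic, and Nirenberg's $C^{2,\alpha}$ estimate for fully nonlinear uniformly elliptic equations in two variables upgrades $\mr$ from $C^{1,1}$ to $C^{2,\alpha}$. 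At $C^{2,\alpha}$ regularity the linearized scalar equation (\ref{elliptic}) from the proof of Proposition \ref{linear prop} has $C^\alpha$ coefficients, so iterated Schauder estimates applied to (\ref{linear}) bootstrap $\mr$ to $C^\infty$.

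For uniqueness, let $\mr_0$ and $\mr_1$ be two smooth isometric embeddings of $(S^2, \bar\sigma)$. I would lift both along the Ricci flow of Lemma \ref{Ricci flow on surfaces} to smooth one-parameter families $\mr_t^{(j)}$, $j = 0, 1$, of smooth isometric embeddings of $\sigma_t$, using openness (Theorem \ref{openness}) together with the smoothness bootstrap just described. The infinitesimal rigidity Theorem \ref{main} identifies the kernel of the linearization with restrictions of Killing vector fields, so each such lift is unique modulo the finite-dimensional isometry group $\mathrm{Isom}(\mathbb{H}^3_{-\kappa})$. The a priori estimate of Theorem \ref{apriori estimate of mean curvature} together with Arzela--Ascoli yields a standard open-closed argument that both families extend for all $t \in [0, \infty)$. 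As $t \to \infty$, $\sigma_\infty$ has constant Gauss curvature and both $\mr_\infty^{(0)}$ and $\mr_\infty^{(1)}$ parametrize round geodesic spheres, hence are related by some hyperbolic isometry $\Phi$. Applying $\Phi^{-1}$ to $\mr_t^{(1)}$ and using local uniqueness modulo isometries along the continuity method propagates the congruence backwards to $t = 0$, giving $\mr_1 = \Phi \circ \mr_0$.

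The main obstacle I anticipate is the initial $C^{1,1} \to C^{2,\alpha}$ regularity gain. One must carefully adapt the classical Darboux-equation scalar formulation of the isometric embedding problem from Euclidean space to the warped hyperbolic metric $g = f^{-2} dr^2 + r^2 g_{S^2}$, and verify that the uniform convexity from $K + \kappa \geq c_0 > 0$ together with Theorem \ref{apriori estimate of mean curvature} supplies the structural hypotheses for Nirenberg's two-dimensional fully nonlinear elliptic estimate. After this point both smoothness and uniqueness reduce to standard elliptic bootstrap and continuity-method arguments.
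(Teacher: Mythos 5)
Your existence half follows essentially the paper's route --- reduce to a scalar fully nonlinear uniformly elliptic equation in two variables, invoke Nirenberg's interior estimate, then bootstrap --- but two points need repair. The paper does not use a local graph over a totally geodesic plane: it works with the globally defined function $\rho = \frac{1}{2}\langle X,X\rangle$ restricted to the surface, whose Hessian identity (\ref{hessian of rho}) gives the global Darboux-type equation (\ref{fully nonlinear elliptic equaion of rho}), with uniform ellipticity coming from $K+\kappa>0$ together with a lower bound on the support function $\langle X,\nu\rangle$ (Lemmas \ref{lower bound of inner radius} and \ref{lower bound of X dot nu}). More seriously, Nirenberg's Theorem I is an \emph{a priori} $C^{2,\alpha}$ estimate for classical solutions; you cannot apply it directly to the $C^{1,1}$ limit produced by Theorem \ref{isom} without supplying an additional regularity theory for weak solutions of Monge--Amp\`ere-type equations. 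The paper sidesteps this by proving the uniform $C^{m,\alpha}$ bounds along the smooth approximating embeddings $\mr_{t_i}$ of the continuity method and concluding that the limit is smooth; your argument should be reorganized in the same way. Also, the bootstrap should run through the scalar equation together with (\ref{hessian of position vector}) and (\ref{hessian of rho}); equation (\ref{elliptic}) is an equation for the auxiliary function $w$ in the openness proof and is not an equation satisfied by the embedding, so it is not the right vehicle for the regularity iteration.

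For uniqueness the paper proves nothing: it cites \cite{Guan-Shen13}. Your proposed substitute --- lifting both embeddings along the Ricci flow and propagating congruence back from $t=\infty$ --- has a genuine gap. Infinitesimal rigidity (Theorem \ref{main}) only identifies the kernel of the linearization; it does not by itself give local uniqueness of the nonlinear problem modulo congruence. That step requires an implicit-function-theorem argument with a slice for the action of the isometry group of $\mathbb{H}^3_{-\kappa}$, which you do not supply, and without it neither the well-definedness of your lifted families (which are at best defined modulo isometries) nor the openness of the set of times at which the two families are congruent is established. Closedness of that set is also nontrivial, since the isometry group of $\mathbb{H}^3_{-\kappa}$ is noncompact: one must use the normalization and the a priori estimates to keep the relating isometries in a compact set before extracting a limit. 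As sketched, the uniqueness half is a plausible program but not a proof, whereas the existence half is correct in outline once the estimates are applied along the smooth family rather than to the $C^{1,1}$ limit.
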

\begin{proof}
The proof of uniqueness (indepent of Pororelov's) could be found in \cite{Guan-Shen13}. To prove the theorem, we have to establish a priori estimates for the higher derivatives of $\mr_{t_i}$. Let $\Sigma_{t_i}$ denote $\mr_{t_i}(\Sigma).$ Define 
$$
\rho(t_i) = \frac{1}{2} \langle X|_{\Sigma_{t_i}},X|_{\Sigma_{t_i}} \rangle.
$$
We compute
\begin{align}
\rho_{;a} &= f \langle X, \frac{\pl \mr}{\pl u^a} \rangle, \notag\\
\rho_{;ab} &= \kappa \langle X,\frac{\pl \mr}{\pl u^a} \rangle \langle X,\frac{\pl \mr}{\pl u^b} \rangle + f\sigma_{ab} -h_{ab}\langle X,\nu\rangle. \label{hessian of rho}
\end{align} 
Taking the determinant of (\ref{hessian of rho}), we get
\begin{align}
\mathcal{F} \equiv \det(\rho_{;ab} - f\sigma_{ab}) - (K + \kappa)(2\rho - f^2|\na\rho|^2) =0 \label{fully nonlinear elliptic equaion of rho}
\end{align}
The assumption $K > -\kappa$, together with Lemma \ref{lower bound of inner radius} and Lemma \ref{lower bound of X dot nu}, imply that (\ref{fully nonlinear elliptic equaion of rho}) is uniformly elliptic
\begin{align*}
\mathcal{F}_{\rho_{;11}} \mathcal{F}_{\rho_{;22}} - \mathcal{F}_{\rho_{;12}}^2 = \frac{K+\kappa}{\det \sigma} \langle X,\nu \rangle^2 >0.
\end{align*}
By \cite[Theorem I] {Nirenberg53: Holder continuity} (see also \cite[Lemma 9.3.4]{Hang-Hong06}) and the Schauder estimates, $\|\rho\|_{C^{m,\alpha}}$ is uniformly bounded for any $m$ and $0 < \alpha < 1$. The higher regularity of $\mr$ follows from (\ref{hessian of position vector}) and (\ref{hessian of rho}).

By (\ref{hessian of rho}), since the support function $\langle X,\nu \rangle$ is bounded from above and below, the $m$-th derivatives of second fundamental form $\na^m h$ are bounded by the $(m+2)$-th derivatives of $\rho$ and $m+1$-th derivatives of $\mr.$ Furthermore, by (\ref{hessian of position vector}), the $(m+2)$-derivatives of $\mr$ is bounded by the $m$-derivatives of $h$ and the $(m+1)$-th derivatives of $\mr$. The higher regularity of $\mr$ follows. This completes the proof.
\end{proof}

\begin{lem}\label{lower bound of inner radius}
Suppose that $\Sigma$ is a smooth closed convex surface centered at the origin in $\mathbb{H}^3_{-\kappa}$. Then there exists a positive constant $R$ depending on $\frac{1}{\max K+\kappa}$ and the diameter of $\Sigma$ such that the geodesic ball of radius $R$ at the origin lies inside $\Sigma.$
\end{lem}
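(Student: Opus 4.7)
The plan is to argue by compactness and contradiction. Assume the conclusion fails, so there exist constants $M,D > 0$ and a sequence of smooth closed convex surfaces $\Sigma_n$ in $\mathbb{H}^3_{-\kappa}$, each centered at the origin, with $\max_{\Sigma_n}(K+\kappa) \leq M$ and diameter $\leq D$, but with $R_n := \mathrm{dist}_{\mathbb{H}^3_{-\kappa}}(0,\Sigma_n)\to 0$. Let $\Omega_n$ denote the convex body enclosed by $\Sigma_n$ and let $p_n \in \Sigma_n$ be the point closest to the origin. Applying the maximum principle to $\rho=\tfrac{1}{2}|X|^2$ at $p_n$ and using the Hessian formula (\ref{hessian of rho}), one obtains the local upper bound $h_{ab}(p_n) \leq \sqrt{\kappa}\coth(\sqrt{\kappa}\,R_n)\,\sigma_{ab}(p_n)$. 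This is the only local ingredient needed; the rest of the argument is global.

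Because the hyperbolic diameter of $\Omega_n$ is bounded by $D$ and all these bodies contain the origin, their images $\varphi(\Omega_n)$ in static coordinates lie in a common Euclidean ball. By the Blaschke selection theorem (in $\mathbb{R}^3$), a subsequence of $\varphi(\Omega_n)$ converges in the Hausdorff metric to a closed convex set $\varphi(\Omega_\infty) \subset \mathbb{R}^3$. The Euclidean centroid is continuous under Hausdorff convergence of convex bodies with volumes bounded below, so the centroid of $\varphi(\Omega_\infty)$ sits at the origin. At the same time, the hypothesis $R_n \to 0$ (together with the equivalence of hyperbolic and Euclidean metrics on compact sets in static coordinates) forces $0 \in \partial \varphi(\Omega_\infty)$. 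Since the centroid of a three-dimensional convex body must lie strictly in its interior, $\Omega_\infty$ must then be degenerate, i.e., at most two-dimensional.

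The remaining task is to rule out this degeneracy using the curvature bound. By Gauss--Bonnet, $\int_{\Sigma_n}(K+\kappa)\,d\mu = 4\pi + \kappa\cdot\mathrm{Area}(\Sigma_n)$; combined with $K+\kappa \leq M$ this yields $\mathrm{Area}(\Sigma_n) \geq 4\pi/M$ uniformly. If $\Omega_n$ collapses to a flat $2$-disc, then each $\Sigma_n$ would look like a thin ``pancake'' and the rim of this pancake must have one principal curvature comparable to $1/\epsilon_n$ (where $\epsilon_n \to 0$ is the collapsing thickness) while the other is bounded, forcing $\max(K+\kappa) = \max\lambda\mu \to \infty$ on the rim --- contradicting $\max(K+\kappa) \leq M$. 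A parallel argument rules out collapse to a 1-dimensional set. Hence $\Omega_\infty$ has non-empty Euclidean interior, which gives the desired contradiction and proves the lemma with $R$ depending only on $M=\max K+\kappa$ and $D$.

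The main technical obstacle is the quantitative ``rim'' step: showing that if $\Omega_n$ Hausdorff-converges to a lower-dimensional set, then the principal curvatures of $\Sigma_n$ necessarily concentrate so that $\lambda_n \mu_n$ blows up along the collapsing direction. Making this precise requires either a careful slicing argument that tracks the two sheets of $\Sigma_n$ approaching the collapsed set, or an application of the Alexandrov--Fenchel / mixed volume inequality in hyperbolic space to transfer the lower bound $\mathrm{Area}(\Sigma_n) \geq 4\pi/M$ into a uniform lower bound on the volume of $\Omega_n$.
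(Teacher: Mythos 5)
Your approach (compactness, Blaschke selection, centroid location) is genuinely different from the paper's, but it has a gap exactly at the decisive step, which you yourself defer. The local ingredient you extract at the nearest point $p_n$, namely $h_{ab}(p_n)\le \sqrt{\kappa}\coth(\sqrt{\kappa}R_n)\,\sigma_{ab}(p_n)$ from the Hessian of $\rho$ at its minimum, is an \emph{upper} bound that blows up as $R_n\to 0$; it never interacts with the hypothesis $K+\kappa\le M$ and contributes nothing to a contradiction. All of the content of the lemma is therefore concentrated in the claim that a uniform upper bound on $\det h = K+\kappa$ forbids collapse of the convex bodies, and that is precisely the classical Euclidean lemma the paper quotes (Lemma 9.1.1 of \cite{Hang-Hong06}); your reduction via Hausdorff limits and centroids only reformulates it. (Two secondary points: the paper's normalization is the center of mass of the surface $\varphi(\Sigma)$, not of the enclosed body, and centroid continuity is delicate exactly in the degenerate-limit case you need; both can be repaired, but they are not the main issue.)

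Neither of your two proposed routes for the non-collapse step works as stated. The ``rim'' claim --- one principal curvature $\sim 1/\epsilon_n$ while the other stays bounded away from zero, so $\lambda\mu\to\infty$ on the rim --- is false in general: if $\Omega_n$ collapses onto a planar convex set whose boundary contains flat arcs (say the limit is a square), then over such an arc the rim is essentially a half-cylinder of radius $\epsilon_n$, with principal curvatures $\approx 1/\epsilon_n$ and $\approx 0$, so $K+\kappa$ remains bounded there; the curvature must concentrate somewhere (e.g.\ near corners), but proving that requires a genuine argument such as a Gauss-map/total-curvature count over the collapsing region, not the pointwise pairing you describe. The alternative route is also invalid: no Alexandrov--Fenchel or mixed-volume inequality converts the Gauss--Bonnet area lower bound into a volume lower bound, since a thin pancake has area bounded below and volume tending to zero --- and ruling out exactly such pancakes is the whole problem. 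By contrast, the paper sidesteps all of this by pushing $\Sigma$ forward under the Beltrami map, checking that the image is a convex surface in $\mathbb{R}^3$ with Gauss curvature at most $(\max K+\kappa)(\max f)^4$, invoking the known Euclidean inradius estimate, and pulling the ball back; if you want a self-contained proof along your lines, you must supply a complete proof of that Euclidean-type non-collapse estimate, which your sketch does not yet do.
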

\begin{proof}
We identify the hyperbolic space $\mathbb{H}^3_{-\kappa}$ with the hyperboloid $\{(x^0,x^1,x^2,x^3) : -(x^0)^2 + (x^1)^2 + (x^2)^2 + (x^3)^2 = -\frac{1}{\kappa}\}$. Consider the Beltrami map
\begin{align*}
\begin{array}{rll}
\beta: \mathbb{H}^3_{-\kappa} &\rw& \{x^0 = \frac{1}{\sqrt{\kappa}} \} \\
(x^0,x^1,x^2,x^3)&\mapsto& \frac{1}{\sqrt{\kappa}}\left( 1,\frac{x^1}{x^0},\frac{x^2}{x^0},\frac{x^3}{x^0} \right).
\end{array}
\end{align*}
We identify $\{x^0 = \frac{1}{\sqrt{\kappa}} \}$ with the Euclidean space $\mathbb{R}^3.$ Recall the static potential $f$ is equal to $\sqrt{\kappa}x^0.$ Suppose $\Sigma$ is given by the embedding
\begin{align*}
\mathbf{r}(u^1,u^2) = \left( x^0(u^1,u^2),x^1(u^1,u^2),x^2(u^1,u^2),x^3(u^1,u^2)\right)
\end{align*}
with metric 
\begin{align*}
\sigma_{ab} = - \frac{1}{\kappa} \frac{\pl f}{\pl u^a}\frac{\pl f}{\pl u^b} + \sum_{i=1}^3 \frac{\pl x^i}{\pl u^a} \frac{\pl x^i}{\pl u^b},
\end{align*}
then the embedding of $\tilde{\Sigma} = \beta(\Sigma)$ is given by $\frac{\mathbf{r}}{f} - \frac{\pl}{\pl x^0}$ where $\frac{\pl}{\pl x^0} = (1,0,0,0)$. We compute the induced metric of $\tilde{\Sigma}$  
\begin{align*}
\tilde{\sigma}_{ab} &= \left\langle \frac{1}{f} \frac{\pl \mathbf{r}}{\pl u^a} - \frac{1}{f^2}\frac{\pl f}{\pl u^a}\mathbf{r}, \frac{1}{f} \frac{\pl \mathbf{r}}{\pl u^b} - \frac{1}{f^2}\frac{\pl f}{\pl u^b}\mathbf{r} \right\rangle \\
&= \frac{1}{f^2} \left( \sigma_{ab} - \dfrac{\frac{\pl f}{\pl u^a}\frac{\pl f}{\pl u^b}}{\kappa f^2} \right)
\end{align*} 
It is not hard to check that the unit normals of $\Sigma$ and $\tilde{\Sigma}$ are related by
\begin{align*}
\tilde{\nu} = \dfrac{\nu + \langle \nu, \frac{\pl}{\pl x^0} \rangle \frac{\pl}{\pl x^0}}{\sqrt{1 + \langle \nu, \frac{\pl}{\pl x^0}\rangle^2}}
\end{align*}
Next we compute the second fundamental forms of $\Sigma$ and $\tilde{\Sigma}$ \begin{align*}
h_{ab} &= -\left\langle \dfrac{\pl^2 \mathbf{r}}{\pl u^a\pl u^b}, \nu \right\rangle\\
\tilde{h}_{ab} &= -\left\langle \frac{\pl}{\pl u^b} \left( \frac{1}{f} \frac{\pl \mathbf{r}}{\pl u^a} - \frac{1}{f^2}\frac{\pl f}{\pl u^a}\mathbf{r} \right) ,\frac{\nu}{\sqrt{1+\langle \nu,\frac{\pl}{\pl x^0}\rangle^2}} \right\rangle \\
&= \frac{1}{\sqrt{1 + \langle \nu, \frac{\pl}{\pl x^0} \rangle^2}} \frac{h_{ab}}{f}.
\end{align*}
Hence $\tilde{\Sigma}$ is convex. The Gauss curvatures of $\Sigma$ and $\tilde{\Sigma}$ are related by  
\begin{align*}
\tilde{K} &= \dfrac{\det(\tilde{h}_{ab})}{\det(\tilde{\sigma}_{ab})} \\
&= \dfrac{f^2}{(1+\langle \nu,\frac{\pl}{\pl x^0} \rangle^2)(1- \frac{|\na f|^2}{\kappa f^2})}(K + \kappa)
\end{align*}
From (\ref{grad f}), we have
\begin{align*}
1 - \frac{|\na f|^2}{\kappa f^2} = \frac{1 + \kappa\langle X,\nu \rangle^2}{f^2} \end{align*}
Hence $\tilde{K} \leq (\max K + \kappa)(\max f)^4$. We apply \cite[Lemma 9.1.1]{Hang-Hong06} to conclude that there exists a positive constant $R$ depending only on $\frac{1}{\max \tilde{K}}$ and the diameter of $\tilde{\Sigma}$ such that there exists a ball of radius $R$ inside $\tilde{\Sigma}.$ Therefore, there exists a positive constant $R'$ depending only on $\frac{1}{\max K + \kappa}$ and the diameter of $\Sigma$ such that there exists a ball of radius $R'$ inside $\Sigma.$
\end{proof}

\begin{lem}\label{lower bound of X dot nu}
For a convex surface $\Sigma$ in $\mathbb{H}^3_{-\kappa},$ $\min_\Sigma \langle X,\nu \rangle = \min_\Sigma r.$
\end{lem}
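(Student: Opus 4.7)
The plan is to interpret $\langle X,\nu\rangle$ as a classical support function measuring the hyperbolic distance from the origin $O$ to the totally geodesic plane tangent to $\Sigma$ at each point, and then to read off the claim from the convexity of $\Sigma$ (with $O$ in the interior of the convex body, which is implicit in the setting of Theorem B).

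First I would record the pointwise bound $\langle X,\nu\rangle\leq |X|=r$ coming from Cauchy--Schwarz and the identity $|X|^2=g(X,X)=r^2$. Taking $p_0\in\Sigma$ that achieves $\min r$, the surface is tangent to the coordinate sphere $\{r=r_{\min}\}$ at $p_0$, and convexity forces the outward unit normal there to be $f\partial_r|_{p_0}$, so $\langle X,\nu\rangle(p_0)=r_{\min}$. This already gives $\min_\Sigma \langle X,\nu\rangle\leq r_{\min}$.

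For the reverse inequality I would prove the support-function identity
\begin{equation*}
\langle X,\nu\rangle \;=\; \tfrac{1}{\sqrt{\kappa}}\sinh\!\bigl(\sqrt{\kappa}\,d(O,\Pi_p)\bigr),
\end{equation*}
where $\Pi_p$ is the totally geodesic $2$-plane in $\mathbb H^3_{-\kappa}$ tangent to $\Sigma$ at $p$ and $d$ denotes hyperbolic distance. Working in the hyperboloid model $\{-(x^0)^2+|x|^2=-1/\kappa\}$, the plane $\Pi_p$ is cut out by $\langle y,\nu\rangle_M=0$, where $\nu$ is viewed as a Minkowski-unit spacelike vector orthogonal to the position of $p$. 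Using the ambient expression $X=\sqrt{\kappa}(r^2\partial_0+x^0 x^i\partial_i)$ recalled in the excerpt together with the tangency $-\nu^0x^0+\nu^ix^i=0$, a brief Minkowski computation gives $\sqrt{\kappa}\langle X,\nu\rangle=\nu^0$. Projecting $O=(1/\sqrt{\kappa},0,0,0)$ Minkowski-orthogonally onto $\mathrm{span}(\nu)^\perp$ and renormalizing back to the hyperboloid locates the foot of the perpendicular from $O$ to $\Pi_p$ and yields $\sinh(\sqrt{\kappa}\,d(O,\Pi_p))=|\nu^0|$; the sign of $\nu^0$ is positive since the outward normal to a supporting plane of a convex body containing $O$ points away from $O$. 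Combining these two formulas gives the displayed identity. Finally, convexity of $\Sigma$ with $O$ in its interior guarantees that the geodesic ball $B(O,t_{\min})$ with $t_{\min}=\tfrac{1}{\sqrt{\kappa}}\sinh^{-1}(\sqrt{\kappa}\,r_{\min})$ lies inside the enclosed body, so every supporting plane satisfies $d(O,\Pi_p)\geq t_{\min}$, and the identity plugs in to give $\langle X,\nu\rangle\geq r_{\min}$.

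The main obstacle is really the bookkeeping in the hyperboloid model: distinguishing Minkowski from Riemannian inner products, checking that the Riemannian outward normal of $\Sigma\subset\mathbb H^3_{-\kappa}$ automatically serves as the Minkowski-unit normal to the ambient $3$-plane cutting out $\Pi_p$, and tracking the sign of $\nu^0$. Once this identification is set up correctly, what remains is the classical support-function and inscribed-ball argument transplanted verbatim to hyperbolic geometry.
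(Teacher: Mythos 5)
Your proposal is correct, but it takes a genuinely different route from the paper's. The paper's proof is a two-line critical-point argument: from the derivative formula $\nabla_a\langle X,\nu\rangle = h(X^T,e_a)$, at any critical point of the support function convexity of $\Sigma$ (nondegeneracy of $h$) forces $X^T=0$, so $X=\langle X,\nu\rangle\,\nu$ there and the minimum value of $\langle X,\nu\rangle$ equals $r$ at that point; together with the pointwise bound $\langle X,\nu\rangle\le r$ this gives the lemma. You instead identify $\langle X,\nu\rangle$ synthetically, and your computations check out: on the hyperboloid, $\langle X,\nu\rangle=\sqrt{\kappa}\,(-r^2\nu^0+x^0x^i\nu^i)=\nu^0/\sqrt{\kappa}$ using $x^i\nu^i=x^0\nu^0$, and the standard point-to-plane formula gives $\sinh\bigl(\sqrt{\kappa}\,d(O,\Pi_p)\bigr)=\sqrt{\kappa}\,|\langle O,\nu\rangle_{M}|=|\nu^0|$, so $\langle X,\nu\rangle$ is exactly the hyperbolic support function of the tangent geodesic plane and the inscribed-ball argument closes the loop. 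What your route buys: a transparent geometric meaning for $\langle X,\nu\rangle$, and an argument that only needs $h\ge 0$ (the paper's step $h(X^T,\cdot)=0\Rightarrow X^T=0$ quietly uses strict convexity at the critical point). What it costs: you invoke the global fact that a closed convex surface in $\mathbb{H}^3_{-\kappa}$ bounds a convex body all of whose tangent geodesic planes are supporting planes (a Hadamard-type local-to-global statement, or else the definition of convexity one adopts), whereas the paper's argument is purely local. Note also that both proofs need the unstated hypothesis that the origin lies inside the enclosed body -- otherwise the lemma fails (a small geodesic sphere not enclosing $O$ has $\min_\Sigma\langle X,\nu\rangle<0<\min_\Sigma r$); you use it explicitly for the sign of $\nu^0$ and for the inscribed ball, the paper uses it implicitly to identify $\langle X,\nu\rangle$ with $+r$ rather than $-r$ at the critical point, and in the application it is harmless since the surfaces are centered at the origin.
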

\begin{proof}
At the critical points of $\langle X,\nu\rangle,$
\begin{align*}
0 = \na_a \langle X,\nu \rangle = h(X^T, \cdot).
\end{align*}
Since $\Sigma$ is convex, we have $X^T=0.$ 
\end{proof}

\section{ Isometric embeddings into the anti-de Sitter spacetime}
We consider the isometric embedding problem of $(S^2,\sigma)$ into the anti-de Sitter spacetime $(AdS, g_{AdS})$. We work on the cosmological chart of $AdS$ on which the metric $g_{AdS}$ can be expressed as
\[
g_{AdS}=-dt^{2}+S^2(t) g, 
\]
where $S(t) = \cos( \sqrt{\kappa}t)$ and $g$ is the hyperbolic metric with sectional curvature $-\kappa$ \cite[(5.9)]{Hawking-Ellis73}.

We have the following theorem:
\begin{thm}[Theorem C]
Given a smooth metric $\sigma$ and a smooth function $s$ on $S^2.$ Suppose 
\begin{align}
K +  \frac{S'}{S} \Delta s - \lt( \frac{S''}{S} - \frac{S'^2}	{S^2} \rt) |\nabla s|^2  + 
 ( 1+|\nabla s|^2 )^{-1} \lt(\frac{ \det(\nabla^2 s) }{\det \sigma} - \frac{S'}{S} \nabla^a s \nabla^b s\nabla_a \nabla_b s\rt)>-\kappa. \label{Gauss curvature of the projection}
\end{align}
where $\na$ and $\Delta$ denote the gradient and Laplace operator with respect to $\sigma.$ Then there exists a unique spacelike isometric embedding $\mathbf{r}:(S^2,\sigma) \rw AdS$ with prescribed cosmological time function $s$.
\end{thm}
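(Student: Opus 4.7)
The plan is to reduce Theorem C to Theorem \ref{isom>0}, our smooth isometric embedding theorem for hyperbolic space. Any prospective $\mathbf{r}:(S^{2},\sigma)\to AdS$ whose cosmological time coordinate is $s$ must factor as $\mathbf{r}(x)=(s(x),y(x))$ for some map $y:S^{2}\to\mathbb{H}^{3}_{-\kappa}$. Because
\[
\mathbf{r}^{*}g_{AdS} \;=\; -\,ds\otimes ds \;+\; S^{2}(s)\,y^{*}g,
\]
the isometric condition $\mathbf{r}^{*}g_{AdS}=\sigma$ is equivalent to $y^{*}g=\tilde\sigma$, where $\tilde\sigma := S^{-2}(s)\bigl(\sigma+ds\otimes ds\bigr)$. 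Since $\tilde\sigma$ is automatically positive definite, any such $y$ yields a spacelike isometric embedding $\mathbf{r}=(s,y)$, and conversely; existence and uniqueness of $\mathbf{r}$ therefore reduce to those for $y:(S^{2},\tilde\sigma)\to\mathbb{H}^{3}_{-\kappa}$.

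By Theorem \ref{isom>0}, a unique smooth isometric embedding $y$ exists whenever the Gauss curvature $\tilde K$ of $\tilde\sigma$ satisfies $\tilde K>-\kappa$. I would compute $\tilde K$ in two stages. First, set $\bar\sigma := \sigma+ds\otimes ds$ and realize $(S^{2},\bar\sigma)$ as the graph of $s$ in the Riemannian product $(S^{2}\times\mathbb{R},\sigma+dt^{2})$; the unit normal is $\nu=(\partial_t-\nabla s)/\sqrt{1+|\nabla s|^{2}}$, the second fundamental form is $h_{ab}=-\nabla_a\nabla_b s/\sqrt{1+|\nabla s|^{2}}$, and the Gauss equation, combined with the observation that the product sectional curvature of the tangent plane equals $K/(1+|\nabla s|^{2})$, yields
\[
\bar K \;=\; \frac{K}{1+|\nabla s|^{2}} \;+\; \frac{1}{(1+|\nabla s|^{2})^{2}}\,\frac{\det\nabla^{2}s}{\det\sigma}.
\]
Second, the two-dimensional conformal-change formula applied to $\tilde\sigma=S^{-2}\bar\sigma$ gives $\tilde K = S^{2}\bar K+S^{2}\Delta_{\bar\sigma}\log S(s)$. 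Using the explicit expressions $\bar\sigma^{ab}=\sigma^{ab}-\nabla^{a}s\,\nabla^{b}s/(1+|\nabla s|^{2})$ and $\sqrt{\det\bar\sigma}=\sqrt{\det\sigma}\sqrt{1+|\nabla s|^{2}}$, the chain-rule expansion of $\Delta_{\bar\sigma}\log S(s)$ produces precisely the $(S'/S)\Delta s$, $|\nabla s|^{2}$, and $(1+|\nabla s|^{2})^{-1}(S'/S)\nabla^{a}s\,\nabla^{b}s\,\nabla_{a}\nabla_{b}s$ contributions visible in (\ref{Gauss curvature of the projection}).

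The main obstacle is this second computation: it is lengthy and requires care with covariant versus partial derivatives, and one must use the AdS identity $SS''-S'^{2}=-\kappa$ (equivalently $S''/S-S'^{2}/S^{2}=-\kappa/S^{2}$) to recast the inequality $\tilde K>-\kappa$ into the form displayed in (\ref{Gauss curvature of the projection}). Once the hypothesis is identified with $\tilde K>-\kappa$, Theorem \ref{isom>0} produces a unique smooth isometric embedding $y:(S^{2},\tilde\sigma)\to\mathbb{H}^{3}_{-\kappa}$, and $\mathbf{r}=(s,y)$ is the desired spacelike isometric embedding of $(S^{2},\sigma)$ into $AdS$ with prescribed cosmological time $s$; the uniqueness claim of Theorem C then follows from the uniqueness clause of Theorem \ref{isom>0}, modulo the freedom of choosing the trivialization $\mathbb{H}^{3}_{-\kappa}\hookrightarrow AdS$.
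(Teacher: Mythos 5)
Your proposal reproduces the paper's argument essentially verbatim: factor the embedding through the projection to $\mathbb{H}^3_{-\kappa}$, compute the Gauss curvature of $\hat{\sigma}=S^{-2}(s)(\sigma+ds^2)$ by the same graph-in-product plus two-dimensional conformal-change decomposition (your formulas for $\bar{K}$ and for $\hat{K}=S^2(\bar{K}+\Delta_{\bar{\sigma}}\ln S)$ agree with the paper's), and then invoke Theorem \ref{isom>0} for existence and uniqueness up to congruence. The only caveat is that hypothesis (\ref{Gauss curvature of the projection}) is not "identified with" $\hat{K}>-\kappa$ as you assert: because of the positive prefactor $S^2(s)/(1+|\nabla s|^2)\leq 1$ it is a strictly stronger, merely sufficient condition, but the implication you actually need (the hypothesis forces $\hat{K}>-\kappa$) does hold, so this is a matter of wording rather than a gap.
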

\begin{proof}
Suppose we have an isometric embedding into the cosmological chart $\mr = (s, \mr^1, \mr^2, \mr^3): (S^2,\sigma) \rw AdS$. Consider the projection $\hat{\mr}: S^2 \rw \mathbb{H}^3_{-\kappa}$, where $\hat{\mr} = (\mr^1, \mr^2, \mr^3)$. The induced metric on the image of the embedding satisfies
$$
\hat{\sigma}=S^{-2}\lt(s)(ds^{2}+\sigma\rt)=S^{-2}(s)\bar{\sigma},
$$
where $\bar{\sigma} = ds^{2}+\sigma$.

Denote the Gauss curvatures of $\sigma,\bar{\sigma}$, and $\hat{\sigma}$ by $K,\bar{K},$ and $\hat{K}$, respectively. The Gauss curvatures $\hat{K}$ and $\bar{K}$ can be computed as
\begin{eqnarray*}
\hat{K} &=& S^2(s) \lt( \bar{K} + \Delta_{\bar{\sigma}} \ln S(s) \rt),\\
\bar{K} &=& ( 1 + |\nabla s |^2 )^{-1} \lt( K + ( 1 + |\nabla s |^2 )^{-1} \frac{\det (\nabla^2 s)}{\det \sigma} \rt),
\end{eqnarray*}
where $\Delta_{\bar{\sigma}}$ is the Laplacian with respect to $\bar{\sigma}$  and $\na^2$ denotes the Hessian with respect to $\sigma.$ Moreover, 
\begin{eqnarray*}
\Delta_{\bar{\sigma}} \ln S(s) 
	&=& \lt( \sigma^{ab} - \frac{\nabla^a s \nabla^b s}{ 1+ |\nabla s|^2} \rt) 
			\lt( \frac{S'}{S} \cdot \frac{ \nabla_a \nabla_b s}{ 1+|\nabla s|^2} - \lt( \frac{S''}{S} - \frac{S'^2}{S^2} \rt) \nabla_a s \nabla_b s \rt) \\
	&=& \frac{S'}{S} \cdot \frac{\Delta s}{ 1+|\nabla s|^2} - \lt( \frac{S''}{S} - \frac{S'^2}{S^2} \rt) \frac{ |\nabla s|^2}{1+|\nabla s|^2} - \frac{S'}{S} \cdot \frac{ \nabla^a s \nabla^b s \nabla_a \nabla_b s}{ (1+|\nabla|^2 )^2}.
\end{eqnarray*}
Thus,
\begin{eqnarray*}
\hat{K} &=& \frac{ S^2(s) }{ ( 1 + |\nabla s |^2 )} \bigg\{ K +  \frac{S'}{S} \Delta s - \lt( \frac{S''}{S} - \frac{S'^2}	{S^2} \rt) |\nabla s|^2  + \\
	&&\qquad ( 1+|\nabla s|^2 )^{-1} \lt(\frac{ \det(\nabla^2 s) }{\det \sigma} - \frac{S'}{S} \nabla^a s \nabla^b s\nabla_a \nabla_b s\rt)\bigg\}.
\end{eqnarray*} 

The above computation shows that if (\ref{Gauss curvature of the projection}) holds, then by Theorem \ref{isom>0}, there exists an isometric embedding $(\mr^1, \mr^2, \mr^3): (S^2, \hat{\sigma}) \rw \mathbb{H}^3_{-\kappa} $ and $(s, \mr^1, \mr^2, \mr^3): (S^2, \sigma) \rw AdS$ is the desired isometric embedding into anti-de Sitter spacetime. 
 
As for the uniqueness, we assume that $\mr_a  = (s, \mr^1_a,\mr^2_a,\mr^3_a): (S^2, \sigma) \rw AdS, a=1,2$ are two isometric embeddings. Since the induced metrics of the projection $\hat{\mr}_1$ and $\hat{\mr}_2$ are isometric, they differ by an isometry in $\mathbb{H}^3_{-\kappa}$. Consequently, $\mr_1$ and $\mr_2$ are congruent.
\end{proof}
\appendix
\appendixpage
\section{Derivatives of eigenvalues}
The purpose of this appendix is to prove a special case of the following well-known fact(\cite[Theorem 5.5]{Ball84})
\begin{prop}
Let $M: \mathbb{R}^m \rw \{ \mbox{symmetric } n\times n \mbox{ matrices} \}$ be a smooth matrix-valued function with distinct eigenvalues $\lambda_1(x), \ldots, \lambda_n(x)$. Suppose $M(0)$ is diagonal. Then we have
\begin{align}
\frac{\pl \lambda_i}{\pl x^a}(0)&= \frac{\pl M_{ii}}{\pl x^a} (0) \label{derivative of eigenvalue}\\
\frac{\pl^2 \lambda_i}{\pl x^a \pl x^b}(0) &= \frac{\pl^2 M_{ii}}{\pl x^a \pl x^b} (0) - 2 \sum_{j \neq i} \dfrac{\frac{\pl M_{ij}}{\pl x^a}(0) \frac{\pl M_{ij}}{\pl x^b} (0)}{\lambda_j(0) - \lambda_i(0)} \label{Hessian of eigenvalue}
\end{align}
\end{prop}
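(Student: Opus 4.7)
The plan is to use the fact that since $M(0)$ is symmetric with distinct eigenvalues, standard perturbation theory (or the implicit function theorem applied to the characteristic polynomial together with Gram--Schmidt) produces smooth eigenvalue functions $\lambda_i(x)$ and smooth unit eigenvector functions $v_i(x)$ near $x=0$ with $v_i(0)=e_i$, satisfying $M(x)v_i(x)=\lambda_i(x)v_i(x)$. All derivative information about $\lambda_i$ will be extracted by differentiating this identity and projecting onto $e_i$ or $e_j$.

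For the first derivative, I would differentiate $Mv_i=\lambda_i v_i$ once with respect to $x^a$, take the inner product with $e_i$, and evaluate at $x=0$. Using $M(0)e_i=\lambda_i(0)e_i$, the terms involving $v_{i,a}(0)$ cancel and one is left with $\partial_a M_{ii}(0)=\partial_a\lambda_i(0)$, giving \eqref{derivative of eigenvalue}. Along the way, taking instead the inner product with $e_j$ for $j\neq i$ yields the off-diagonal components of $v_{i,a}(0)$:
\begin{align*}
\lt(v_{i,a}(0)\rt)_j \;=\; \frac{\pl M_{ij}/\pl x^a\,(0)}{\lambda_i(0)-\lambda_j(0)}, \qquad j\neq i.
\end{align*}
The diagonal component $(v_{i,a}(0))_i$ vanishes because $|v_i|^2\equiv 1$ forces $v_i\cdot v_{i,a}=0$, and $v_i(0)=e_i$.

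For the second derivative, I would differentiate $Mv_i=\lambda_i v_i$ twice, once in $x^a$ and once in $x^b$, take the inner product with $e_i$, and again use $M(0)e_i=\lambda_i(0)e_i$ to kill the $v_{i,ab}$ term. The surviving identity at the origin is
\begin{align*}
\frac{\pl^2 M_{ii}}{\pl x^a\pl x^b}(0) + \lt(\tfrac{\pl M}{\pl x^a}(0)\,v_{i,b}(0)\rt)_i + \lt(\tfrac{\pl M}{\pl x^b}(0)\,v_{i,a}(0)\rt)_i \;=\; \frac{\pl^2\lambda_i}{\pl x^a\pl x^b}(0),
\end{align*}
since $(v_{i,a}(0))_i=(v_{i,b}(0))_i=0$ also kills the cross-terms $\lambda_{i,a}(v_{i,b})_i+\lambda_{i,b}(v_{i,a})_i$. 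Substituting the formula for $(v_{i,b}(0))_j$ derived in the previous step, and using the symmetry $\partial_a M_{ij}=\partial_a M_{ji}$, each of the two inner-product terms becomes $\sum_{j\neq i}\dfrac{\partial_a M_{ij}\,\partial_b M_{ij}}{\lambda_i(0)-\lambda_j(0)}$, and together they produce the claimed factor of $-2$ after the sign flip $\lambda_i-\lambda_j\mapsto-(\lambda_j-\lambda_i)$, yielding \eqref{Hessian of eigenvalue}.

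There is essentially no obstacle beyond bookkeeping; the only subtle point is justifying the smooth choice of eigenvectors $v_i(x)$ with $v_i(0)=e_i$, which follows from distinctness of the eigenvalues (the spectral projections are smooth, hence so are unit eigenvectors after a smooth phase/sign choice). I would state this as a brief remark at the start and then carry out the two differentiations in order.
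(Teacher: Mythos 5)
Your proof is correct, but it takes a genuinely different route from the paper's. You differentiate the eigenvector equation $M(x)v_i(x)=\lambda_i(x)v_i(x)$ once and twice (the standard first- and second-order perturbation-theory computation), which requires you first to produce smooth eigenvector fields $v_i$ with $v_i(0)=e_i$ --- legitimate here because the eigenvalues are simple (e.g.\ via smoothness of the spectral projections), and you correctly flag this as the one point needing justification; you also use the normalization $|v_i|\equiv 1$ to kill the diagonal component of $v_{i,a}(0)$ and the symmetry of $\partial_a M$ to pair the two off-diagonal contributions into the factor $2$, and the sign flip $\lambda_i-\lambda_j=-(\lambda_j-\lambda_i)$ indeed yields the stated $-2$. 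The paper instead differentiates the scalar characteristic equation $\det\bigl(M(x)-\lambda_i(x)I\bigr)=0$ once and twice and evaluates using the adjugate matrix $(M-\lambda_i I)^*$: at the diagonal point its only nonzero entry is the $(i,i)$ entry $\prod_{k\neq i}\bigl(\lambda_k(0)-\lambda_i(0)\bigr)$, and the first derivative of the adjugate is extracted from the identity $(M-\lambda_i I)^*(M-\lambda_i I)=0$. The adjugate route never chooses eigenvectors, so it sidesteps the smooth-selection issue entirely, at the cost of a slightly less transparent computation of $\partial_b (M-\lambda_i I)^*$; your route makes the origin of the denominators $\lambda_j-\lambda_i$ visible (they are the off-diagonal components of $v_{i,a}(0)$) and carries over verbatim to Hermitian matrices, but needs the eigenvector smoothness remark you include. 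Both arguments use distinctness of the eigenvalues essentially, and your bookkeeping checks out.
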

\begin{proof}
Denote the adjoint matrix of $M$ by $M^*.$ By definition,
\begin{align}\label{characteristic equation}
0 = \det (M(x) - \lambda_i(x)I)
\end{align}
Differentiating (\ref{characteristic equation}), we get
\begin{align*}
0 = \mbox{Tr}\left( (M - \lambda_i I)^* \left( \frac{\pl M}{\pl x^a} - \frac{\pl \lambda_i}{\pl x^a}I \right)\right).
\end{align*}
Since $M(0)$ is diagonal, the only nonzero entry of $( M(0) - \lambda_i(0) I)^*$ is 
\begin{align}\label{adjoint matrix at 0}
(M(0) - \lambda_i(0) I)^*_{ii} = \prod_{k \neq i}(\lambda_k - \lambda_i)(0).
\end{align}
From (\ref{adjoint matrix at 0}), the first statement follows. To prove the second statement, we differentiate (\ref{characteristic equation}) twice to get 
\begin{align*}
0 = \mbox{Tr} \left( \frac{\pl (M - \lambda_i I)^*}{\pl x^b} \left( \frac{\pl M}{\pl x^a} - \frac{\pl \lambda_i}{\pl x^a} I \right) + (M -\lambda_i I)^*\left( \frac{\pl^2 M}{\pl x^a \pl x^b} - \frac{\pl^2 \lambda_i}{\pl x^a \pl x^b} I \right)\right).
\end{align*}
By differentiating the equation $(M - \lambda_i I)^*(M - \lambda_i I) =0,$ we observe that the only nonzero entries of $\frac{\pl (M-\lambda_i I)^*}{\pl x^b}(0)$ are
\begin{align}\label{derivative of adjoint matrix at 0}
\left[ \frac{\pl (M-\lambda_i I)^*}{\pl x^b}(0) \right]_{ij} = (-1) \left( \prod_{k \neq i}(\lambda_k(0) - \lambda_i(0)) \right) \dfrac{\frac{\pl M_{ij}}{\pl x^b}(0)}{\lambda_j(0) - \lambda_i(0)} \quad \mbox{for } j \neq i.
\end{align}
From (\ref{derivative of adjoint matrix at 0}), we obtain
\begin{align*}
0 = \left( \prod_{k \neq i}(\lambda_k(0) - \lambda_i(0)) \right) \left( 2 \sum_{j \neq i} (-1) \dfrac{\frac{\pl M_{ij}}{\pl x^b}(0)}{\lambda_j(0) -\lambda_i(0)} \dfrac{\pl M_{ij}}{\pl x^a}(0) + \frac{\pl^2 M_{ii}}{\pl x^b \pl x^a}(0) - \frac{\pl^2 \lambda_i}{\pl x^b \pl x^a}(0) \right)
\end{align*}
This proves the second statement.
\end{proof}

\end{document}